\newcommand{\RomanNumeralCaps}[1]
    {\MakeUppercase{\romannumeral #1}}
\theoremstyle{definition}
\newtheorem{theorem}{Theorem}[section]
\newtheorem{lemma}[theorem]{Lemma}
\newtheorem{proposition}[theorem]{Proposition}
\newtheorem{corollary}[theorem]{Corollary}
\theoremstyle{remark}
\newtheorem{remark}[theorem]{Remark}
\numberwithin{equation}{section}
\title[Eigenfunction Restriction Estimates]{Eigenfunctions restriction estimates for curves with nonvanishing geodesic curvatures in compact Riemannian surfaces with nonpositive curvature}
\author[C. Park]{Chamsol Park}
\address{Department of Mathematics and Statistics, University of New Mexico, Albuquerque, NM 87131, USA}
\email{parkcs@unm.edu}
\date{}
\keywords{Eigenfunction restriction estimates}
\subjclass[2020]{58J40}
\pgfplotsset{compat=1.16}
\begin{document}

\begin{abstract}
    For $2\leq p<4$, we study the $L^p$ norms of restrictions of eigenfunctions of the Laplace-Beltrami operator on smooth compact $2$-dimensional Riemannian manifolds. Burq, G\'erard, and Tzvetkov \cite{BurqGerardTzvetkov2007restrictions}, and Hu \cite{Hu2009lp} found the eigenfunction estimates restricted to a curve with nonvanishing geodesic curvatures. We will explain how the proof of the known estimates helps us to consider the case where the given smooth compact Riemannian manifold has nonpositive sectional curvatures. For $p=4$, we will also obtain a logarithmic analogous estimate, by using arguments in Xi and Zhang \cite{XiZhang2017improved}, Sogge \cite{Sogge2017ImprovedCritical}, and Bourgain \cite{Bourgain1991Besicovitch}.
\end{abstract}

\maketitle

\section{Introduction}

Let $(M, g)$ be a smooth compact $n$-dimensional Riemannian manifold without boundary and $\Sigma$ a $k$-dimensional embedded submanifold. We denote by $\Delta_g$ the associated negative Laplace-Beltrami operator on $M$ so that the spectrum of $-\Delta_g$ is discrete. If $e_\lambda$ is any $L^2$ normalized eigenfunction, then we write
\begin{align*}
    \Delta_g e_\lambda = -\lambda^2 e_\lambda,\quad \|e_\lambda\|_{L^2(M)}=1, \quad \lambda\geq 0.
\end{align*}
Here $L^p (M)$ is the space of $L^p$ functions with respect to the Riemannian measure. There have been many ways of measuring possible concentrations of the eigenfunctions of the Laplace-Beltrami operator on a manifold so far. One of the ways of measuring the possible concentrations of $e_\lambda$ on a manifold is to study the possible growth of the $L^p$ norm of the restrictions of $e_\lambda$ to submanifolds of $M$. This article deals with the concentrations of the restrictions of $e_\lambda$ to a curve with nonvanishing geodesic curvatures of $2$-dimensional manifold $M$.

We first review the previous results. We consider the operator $\mathds{1}_{[\lambda, \lambda+h(\lambda)]}(\sqrt{-\Delta_g})$, which projects a function onto all eigenspaces of $\sqrt{-\Delta_g}$ whose corresponding eigenvalue lies in $[\lambda, \lambda+h(\lambda)]$, which are approximations to eigenfunctions, or quasimodes. Recall that the exact eigenfunctions can also be considered as quasimodes in that
\begin{align*}
    \mathds{1}_{[\lambda, \lambda+h(\lambda)]} (\sqrt{-\Delta_g}) e_\lambda=e_\lambda.
\end{align*}
For $h(\lambda)\equiv 1$ case, there are well-known estimates of Sogge \cite{Sogge1988concerning} which state that, for a uniform constant $C>0$ depending only on $M$,
\begin{align}\label{Sog88}
    \| \mathds{1}_{[\lambda, \lambda+1]} (\sqrt{-\Delta_g}) \|_{L^2 (M) \to L^p (M)} \leq C \lambda^{\delta(p, n)},\quad \lambda\geq 1,
\end{align}
where
\[
\delta(p, n)=
\begin{cases}
\frac{n-1}{2}-\frac{n}{p}, & \text{if}\quad p_c\leq p\leq \infty, \\
\frac{n-1}{2}\left(\frac{1}{2}-\frac{1}{p} \right), & \text{if}\quad 2\leq p\leq p_c,
\end{cases}\quad\quad
p_c=\frac{2(n+1)}{n-1}.
\]
It follows immediately that
\begin{align}\label{Sog88 exact eigfcns}
    \| e_\lambda \|_{L^p (M)}\leq C\lambda^{\delta (p, n)}.
\end{align}
The exponent $p_c$ is a so-called ``critical'' exponent. The work of Sogge \cite{Sogge1988concerning} (see also \cite[pp.142-145]{Sogge1993fourier}) also showed that the estimates \eqref{Sog88} are sharp in that there exist a function $f$, or a quasimode, such that
\begin{align*}
    \|\mathds{1}_{[\lambda, \lambda+1]} (\sqrt{-\Delta_g}) f \|_{L^p (M)}\geq c \lambda^{\delta(p, n)} \|f\|_{L^2 (M)},\quad \text{for some uniform } c>0.
\end{align*}
Sogge \cite{Sogge1986oscillatory} showed that the estimates \eqref{Sog88 exact eigfcns} are sharp for an infinite family of exact eigenfunctions $e_\lambda$ in that
\begin{align*}
    \|e_\lambda \|_{L^p (\mathbb{S}^n)} \geq c \lambda^{\delta(p, n)},\quad \text{for some uniform } c>0,
\end{align*}
where $M$ is the round sphere. Specifically, the $p_c\leq p\leq \infty$ case is saturated by a sequence of the zonal harmonics on the sphere, whereas $2\leq p\leq p_c$ case is sharp due to the highest weight spherical harmonics on the sphere. The estimates \eqref{Sog88} or \eqref{Sog88 exact eigfcns} are sometimes called ``universal estimates'' since they are satisfied on any smooth compact Riemannian manifold. If one assumes nonpositive curvatures or no conjugate points on $M$, the phenomenas are a bit different. For example, the geodesic flow in negatively curved manifolds behave chaotically, and so, there may be smaller concentration of the restrictions of eigenfunctions of the Laplace-Beltrami operator to geodesics in the negatively curved manifolds.

If $(M, g)$ has nonpositive sectional curvatures, we have some estimates of the case $h(\lambda)=(\log \lambda)^{-1}$
\begin{align}\label{LogM}
    \| \mathds{1}_{[\lambda,\lambda+(\log \lambda)^{-1}]} (\sqrt{-\Delta_g}) \|_{L^2(M)\to L^p (M)} \leq C_p \frac{\lambda^{\delta(p, n)}}{(\log \lambda)^{\sigma(p, n)}},
\end{align}
for some constant $\sigma(p, n)>0$. By using methods of B\'erard \cite{Berard1977onthewaveequation}, Hassell and Tacy showed in \cite{HassellTacy2015improvement} that the estimates \eqref{LogM} hold for $\sigma(p, n)=\frac{1}{2}$ with $p_c < p\leq \infty$. This case was also recently investigated by Canzani and Galkowski \cite{CanzaniGalkowski2020Growth} under more general hypotheses. The case $2<p\leq p_c$ was investigated by Blair and Sogge \cite{BlairSogge2017refined, BlairSogge2018concerning, BlairSogge2019logarithmic}, Sogge \cite{Sogge2011KakeyaNikodym}, and Sogge and Zelditch \cite{SoggeZelditch2014eigenfunction}.

There are analogues of \eqref{Sog88} and \eqref{LogM} when we replace $\mathds{1}_{[\lambda, \lambda+h(\lambda)]}$ by $\mathcal{R}_{\Sigma}\circ\mathds{1}_{[\lambda, \lambda+h(\lambda)]}$, where $\mathcal{R}_\Sigma$ denotes the restriction map as $\mathcal{R}_\Sigma f=\left. f\right|_\Sigma$. The metric $g$ endows $\Sigma$ with induced measures, and thus, we can also consider the Lebesgue spaces $L^p (\Sigma)$. Burq, G\'erard, and Tzvetkov \cite{BurqGerardTzvetkov2007restrictions}, and Hu \cite{Hu2009lp} studied estimates of the form
\begin{align}\label{BGT}
    \| \mathcal{R}_\Sigma \circ \mathds{1}_{[\lambda, \lambda+1]}(\sqrt{-\Delta_g}) \|_{L^2 (M) \to L^p(\Sigma)} \leq C \lambda^{\rho_k(p, n)},\quad \lambda\geq 1,
\end{align}
where
\[
\rho_k (p, n)=
\begin{cases}
\frac{n-1}{4}-\frac{n-2}{2p}, & \text{if } k=n-1 \text{ and } 2\leq p\leq \frac{2n}{n-1},\\
\frac{n-1}{2}-\frac{n-1}{p}, & \text{if } k=n-1, \text{ and } \frac{2n}{n-1}\leq p\leq \infty,
\end{cases}
\]
which in turn implies that
\begin{align}\label{BGT exact eigfcns}
    \| e_\lambda \|_{L^p (\Sigma)}\leq C\lambda^{\rho_k(p, n)}.
\end{align}
These estimates are also called universal estimates since they hold on any smooth compact Riemannian manifold. The exponent $\frac{2n}{n-1}$ is the critial exponent in this case. They also considered other cases $k\leq n-2$, but we focus on $k=n-1$ here and below, since we will talk about $(n, k)=(2, 1)$ mainly in this paper. Using methods in semiclassical analysis, Tacy \cite{Tacy2010Semiclassical} considered the same estimates as special cases of estimates for quasimodes. In \cite{BurqGerardTzvetkov2007restrictions}, Burq, G\'erard, and Tzvetkov also showed the estimates \eqref{BGT} are sharp by showing that, for all $\lambda \geq 1$, there exists a function $f=f_\lambda$ such that
\begin{align*}
    \| \mathcal{R}_\Sigma \circ \mathds{1}_{[\lambda, \lambda+1]} (\sqrt{-\Delta_g}) f\|_{L^p (\Sigma)} \geq c \lambda^{\rho_k (p, n)} \| f\|_{L^2 (M)},\quad \text{for some uniform } c>0,
\end{align*}
on any compact Riemannian manifold, and the estimates \eqref{BGT exact eigfcns} are sharp by showing that
\begin{align*}
    \| e_\lambda \|_{L^p (\Sigma)} \geq c \lambda^{\rho_k (p, n)}, \quad \text{for some } c>0,
\end{align*}
if the $e_\lambda$ are the zonal harmonics or the highest weight spherical harmonics on the round sphere $M=\mathbb{S}^n$.

Focusing on the case $(n, k, p)=(2, 1, 2)$ in \eqref{BGT}, they showed
\begin{align*}
    \| \mathcal{R}_\Sigma \circ \mathds{1}_{[\lambda, \lambda+1]}(\sqrt{-\Delta_g}) \|_{L^2 (M) \to L^2(\Sigma)} \leq C \lambda^{\frac{1}{4}},\quad \lambda\geq 1.
\end{align*}
Burq, G\'erard, and Tzvetkov \cite{BurqGerardTzvetkov2007restrictions}, and Hu \cite{Hu2009lp} showed that if $\Sigma$ is a curve $\gamma$ with nonvanishing geodesic curvatures, then $\lambda^{1/4}$ can be replaced by $\lambda^{1/6}$.
\begin{theorem}[Theorem 2 in \cite{BurqGerardTzvetkov2007restrictions}, Theorem 2 in \cite{Hu2009lp}]\label{Theorem Universal Estimates}
Suppose $\dim M=2$ and the curve $\gamma$ is a unit-length curve having nonvanishing geodesic curvatures, that is, $g(D_t \gamma', D_t \gamma')\not=0$, where $D_t$ is the covariant derivatives along the curve $\gamma$. We then have that, for a uniform constant $C$,
\begin{align}\label{Universal Estimates for Nonvanishing Curvature}
    \| \mathcal{R}_\gamma \circ \mathds{1}_{[\lambda, \lambda+1]}(\sqrt{-\Delta_g}) \|_{L^2 (M) \to L^2(\gamma)} \leq C \lambda^{\frac{1}{6}},\quad \lambda\gg 1.
\end{align}
If follows immediately from this that
\begin{align}\label{Universal Estimates exact eigfcns}
    \| e_\lambda \|_{L^2(\gamma)} \leq C \lambda^{\frac{1}{6}},\quad \lambda\gg 1.
\end{align}
\end{theorem}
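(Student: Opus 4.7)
The plan is to prove \eqref{Universal Estimates for Nonvanishing Curvature} by a $TT^*$ argument combined with an oscillatory integral analysis of the half-wave kernel restricted to $\gamma$. The estimate \eqref{Universal Estimates exact eigfcns} is then immediate from $e_\lambda = \mathds{1}_{[\lambda,\lambda+1]}(\sqrt{-\Delta_g})e_\lambda$ and $\|e_\lambda\|_{L^2(M)}=1$. I first replace the sharp spectral projector by a smoother one: choose $\chi\in\mathcal{S}(\mathbb{R})$ with $\chi\geq 0$, $\chi\geq 1$ on $[0,1]$, and $\widehat{\chi}$ supported in $(-\delta,\delta)$ for some $\delta$ smaller than half the injectivity radius of $(M,g)$. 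Then it suffices to show $\|\mathcal{R}_\gamma\,\chi(\lambda-\sqrt{-\Delta_g})\|_{L^2(M)\to L^2(\gamma)}\lesssim\lambda^{1/6}$, and by $TT^*$ this is equivalent to the operator $A_\lambda:=\mathcal{R}_\gamma\,\chi^2(\lambda-\sqrt{-\Delta_g})\,\mathcal{R}_\gamma^{\,*}$ on $L^2(\gamma)$ satisfying $\|A_\lambda\|\lesssim\lambda^{1/3}$.

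Using Fourier inversion $\chi^2(\lambda-\sqrt{-\Delta_g})=(2\pi)^{-1}\int e^{it\lambda}\,\widehat{\chi^2}(t)\,e^{-it\sqrt{-\Delta_g}}\,dt$ and the Hadamard/H\"ormander parametrix for the half-wave propagator, the Schwartz kernel of $\chi^2(\lambda-\sqrt{-\Delta_g})$ admits a short-time stationary-phase expansion
\[
K(x,y) \;=\; \lambda^{1/2}\,d_g(x,y)^{-1/2}\!\sum_{\pm} a_\pm(x,y,\lambda)\,e^{\pm i\lambda d_g(x,y)} \;+\; O(\lambda^{-N})
\]
for $\lambda^{-1}\lesssim d_g(x,y)<2\delta$, with $a_\pm$ bounded zeroth-order symbols in $\lambda$, together with the universal pointwise bound $|K(x,y)|\lesssim\lambda$ for $d_g(x,y)\lesssim\lambda^{-1}$. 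Restricting to $\gamma$ and writing $\Phi(s,s')=d_g(\gamma(s),\gamma(s'))$, the kernel of $A_\lambda$ is the Jacobian-weighted sum of the singular diagonal contribution and two oscillatory pieces of the form $\lambda^{1/2}|s-s'|^{-1/2}\,a_\pm\,e^{\pm i\lambda\Phi(s,s')}$.

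I then split dyadically at scale $\lambda^{-1/3}$. The near piece $K_{\rm near}$ (supported in $|s-s'|\lesssim\lambda^{-1/3}$) is controlled by Schur's test,
\[
\sup_{s}\int |K_{\rm near}(s,s')|\,ds' \;\lesssim\; \int_{0}^{\lambda^{-1/3}}\lambda^{1/2}r^{-1/2}\,dr \;\lesssim\; \lambda^{1/3},
\]
which already accounts for the full target bound. For the far piece I write $K_{\rm far}=\sum_{j\geq 0}K_j$ with $K_j$ supported on the annulus $|s-s'|\sim 2^{j}\lambda^{-1/3}$ and amplitude $\lesssim\lambda^{2/3}\,2^{-j/2}$. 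The bound $\|K_j\|_{L^2(\gamma)\to L^2(\gamma)}\lesssim 2^{-j/2}\lambda^{1/6}$ is obtained by a second $TT^*$: the kernel of $K_jK_j^{*}$ is an oscillatory integral in $t$ with phase $\lambda[\Phi(s,t)-\Phi(s',t)]$, whose $t$-derivative equals $(s-s')\,\partial_s\partial_t\Phi(\theta,t)$ for some intermediate $\theta$. Non-stationary phase in $t$, combined with Schur's test, will then yield $\|K_jK_j^{*}\|_{L^2\to L^2}\lesssim 2^{-j}\lambda^{1/3}$, hence the asserted bound on $\|K_j\|$; summing in $j$ gives $\|K_{\rm far}\|\lesssim\lambda^{1/6}$, so $\|A_\lambda\|\lesssim\lambda^{1/3}$.

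The crux, and the main obstacle, is therefore the geometric statement that, under the nonvanishing geodesic curvature hypothesis, $|\partial_s\partial_t\,d_g(\gamma(s),\gamma(t))|$ is comparable to $|s-t|$ for small $|s-t|$, with an implicit constant controlled by $\kappa$. I would verify this by working in Fermi coordinates along $\gamma$, writing $\gamma$ in the form $(s,\tfrac{1}{2}\kappa(s)s_\perp^{2}+\cdots)$, and Taylor-expanding $\Phi^{2}$ (which agrees with the Euclidean squared distance modulo $O(|x-y|^{4})$ in normal coordinates). A symmetric expansion in the variables $u=s-t$, $v=s+t$ then yields $\partial_s\partial_t\Phi(s,t)=c\,\kappa^{2}(s-t)+O(|s-t|^{3})$ with $c\neq 0$, which on the annulus $|s-t|\sim 2^{j}\lambda^{-1/3}$ gives $|\partial_s\partial_t\Phi|\sim 2^{j}\lambda^{-1/3}$. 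The remaining difficulty is purely bookkeeping: carefully controlling the amplitudes (including all derivatives) through the iterated integration by parts so that the standard non-stationary phase lemma applies uniformly in $j$, $s$, $s'$, and $t$, and verifying that the lower-order remainder $R$ in the parametrix contributes a negligible error.
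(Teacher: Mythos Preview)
Your approach is correct in outline and is essentially the original Burq--G\'erard--Tzvetkov argument: a $TT^*$ reduction followed by a \emph{physical-space} dyadic decomposition in $|s-s'|$, with the curvature hypothesis entering through the degeneracy $|\partial_s\partial_t\Phi(s,t)|\sim\kappa^2|s-t|$ of the mixed Hessian of the restricted distance function. The paper takes a genuinely different route: it performs a \emph{microlocal} decomposition via pseudodifferential cutoffs $Q_j$ that localize the cotangent variable to angular sectors $|\xi(N)|/|\xi|_g\sim 2^{-j}$ around the conormal to $\gamma$, then applies Egorov's theorem and a stationary-phase analysis of $Q_j\circ e^{-itP}\circ Q_j^*$ (Propositions~2.2--2.3). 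The curvature hypothesis enters through the monotonicity of $\xi_2(t)$ along the Hamiltonian flow (Lemma~3.4), which forces the $t$-support to be $\lesssim 2^{-j}$ (resp.\ $\lesssim\lambda^{-1/3}$ when $j=J$). Your decomposition is in the dual variable (distance along $\gamma$) rather than in angle; the two are related by the uncertainty principle but lead to quite different proofs. The paper's route is chosen specifically because the angular pieces $Q_j\circ\chi(\lambda-P)$ with the bounds $\|Q_j\circ\chi(\lambda-P)\|_{L^2(M)\to L^2(\gamma)}\lesssim 2^{j/2}$ are \emph{reused} as inputs to the logarithmic improvement (Theorem~1.2); your approach gives the theorem more directly but does not produce those intermediate estimates.

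Two small corrections to your intermediate claims. First, the bound $\|K_j\|_{L^2\to L^2}\lesssim 2^{-j/2}\lambda^{1/6}$ is too optimistic for the method you describe: carrying out the non-stationary phase on $K_jK_j^*$ with $|\partial_t(\Phi(s,t)-\Phi(s',t))|\gtrsim|s-s'|\cdot 2^j\lambda^{-1/3}$ and amplitude derivatives of size $(2^j\lambda^{-1/3})^{-1}$ gives the pointwise bound $|(K_jK_j^*)(s,s')|\lesssim\lambda\,(1+|s-s'|\,2^{2j}\lambda^{1/3})^{-N}$, hence Schur yields $\|K_jK_j^*\|\lesssim\lambda^{2/3}2^{-2j}$, i.e.\ $\|K_j\|\lesssim\lambda^{1/3}2^{-j}$. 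This still sums to $\lambda^{1/3}$, so the proof goes through unchanged. Second, your expansion ``$\partial_s\partial_t\Phi=c\,\kappa^2(s-t)+O(|s-t|^3)$'' should read $c\,\kappa^2|s-t|$ (or be restricted to one side of the diagonal), since $\Phi$ is not smooth across $s=t$; relatedly, in the $K_jK_j^*$ analysis you should separate the cases where $s,s'$ lie on the same side of $t$ (your mean-value argument applies) from the opposite-side case (where the phase derivative is $\approx\pm 2$, so trivially non-stationary), and treat the $\pm$ cross terms $K_j^{+}(K_j^{-})^*$ analogously. These are exactly the ``bookkeeping'' points you flag, and none of them is a genuine obstruction.
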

This estimate was generalized to a higher dimensional analogue in \cite[Theorem 1.4]{Hu2009lp}. Again, using semiclassical analytic methods, Hassell and Tacy \cite{HassellTacy2012CurvedHypersurfaces} obtained estimates generalized to quasimodes.

Again, Burq, G\'erard, and Tzvetkov \cite[Section 5.2 and Remark 5.4]{BurqGerardTzvetkov2007restrictions} showed that the estimate \eqref{Universal Estimates for Nonvanishing Curvature} is sharp by finding a function $f$ as above, and the estimate \eqref{Universal Estimates exact eigfcns} is also sharp when $M$ is the standard sphere $\mathbb{S}^2$, and $\gamma$ is any curve with nonvanishing geodesic curvatures. See also Tacy \cite{Tacy2018Constructing} for constructing sharp examples for exact eigenfunctions on $\mathbb{S}^n$ or quasimodes. We will prove Theorem \ref{Theorem Universal Estimates} again in this article in a different point of view, since we need estimates in our proof to prove Theorem \ref{Theorem Log Improvement}, which will be illustrated below.

Similarly, when $(M, g)$ has nonpositive (or constant negative) curvatures, it has been studied that
\begin{align}\label{LogSub}
    \| \mathcal{R}_\Sigma \circ \mathds{1}_{[\lambda, \lambda+(\log \lambda)^{-1}]} (\sqrt{-\Delta_g}) \|_{L^2(M) \to L^p(\Sigma)} \leq C \frac{\lambda^{\rho_k (p, n)}}{(\log \lambda)^{\sigma_k(p, n)}},\quad \lambda\geq 1,
\end{align}
for some constant $\sigma_k(p, n)>0$ with the same constant $\rho_k (p, n)$ as in \eqref{BGT}. In \cite{Chen2015improvement}, Chen obtained $\sigma_k(p, n)=\frac{1}{2}$ in \eqref{LogSub} for the cases $k=n-1$ with $p>\frac{2n}{n-1}$.

For $k=1$, there also have been studies of critical or subcritical exponent. For subcritical cases, Sogge and Zelditch \cite{SoggeZelditch2014eigenfunction} showed that for any $\epsilon>0$ there exists a $\lambda (\epsilon)<\infty$ such that
\begin{align}\label{SoggeZelditch Result}
    \sup_{\gamma \in \Pi} \left(\int_\gamma |e_\lambda|^2\:ds \right)^{1/2} \leq \epsilon \lambda^{\frac{1}{4}},\quad \lambda >\lambda(\epsilon),\; \dim M=2,
\end{align}
where $\Pi$ is the space of all unit-length geodesics in $M$, and $ds$ is the arc-length measure on $\gamma$. By using the methods in \cite{SoggeZelditch2014eigenfunction} with Toponogov's comparison theorem, Blair and Sogge \cite{BlairSogge2018concerning} obtained $\sigma_1 (2, 2)=\frac{1}{4}$, which is an improvement of $\epsilon$ in \eqref{SoggeZelditch Result}. The works of Blair \cite{Blair2018logarithmic}, and Xi and Zhang \cite{XiZhang2017improved} obtain $\sigma_1(4, 2)=\frac{1}{4}$ for (unit-length) geodesics, which is a critical exponent in that $p=\frac{2n}{n-1}$.

As in the universal estimates, for the case $(n, k, p)=(2, 1, 2)$ in \eqref{LogSub},  we can expect that $\lambda^{1/4}$ may be replaced by $\lambda^{1/6}$ if $\gamma$ has nonvanishing geodesic curvatures, analogous to \eqref{Universal Estimates for Nonvanishing Curvature}. Moreover, by \cite[Theorem 2]{BurqGerardTzvetkov2007restrictions} and \cite[Theorem 1.2]{Hu2009lp}, we know that
\begin{align*}
    \| \mathcal{R}_\gamma \circ \mathds{1}_{[\lambda, \lambda+(\log \lambda)^{-1}]}(\sqrt{-\Delta_g}) \|_{L^2 (M) \to L^p(\gamma)} \leq C \lambda^{\frac{1}{3}-\frac{1}{3p}},\quad\lambda\geq 1, \quad 2\leq p\leq 4.
\end{align*}
We want to show the analogue of this for $2\leq p<4$ in the presence of nonpositive sectional curvatures.

\begin{theorem}\label{Theorem Log Improvement}
Let $(M, g)$ be a compact $2$-dimensional smooth Riemannian manifold (without boundary) with nonpositive sectional curvatures pinched between $-1$ and $0$. Also suppose that $\gamma$ is a fixed unit-length curve with $g(D_t \gamma', D_t \gamma')\not=0$. Then, for a uniform constant $C_p>0$ and $\lambda\geq 1$,
\begin{align}\label{Our aim in this paper}
    \| \mathcal{R}_\gamma \circ \mathds{1}_{[\lambda, \lambda+(\log \lambda)^{-1}]}(\sqrt{-\Delta_g}) \|_{L^2 (M) \to L^p (\gamma)} \leq C_p \frac{\lambda^{\frac{1}{3}-\frac{1}{3p}}}{(\log \lambda)^{\frac{1}{2}}},\quad 2\leq p<4,
\end{align}
where $C_p \to \infty$ as $p\to 4$.

It follows from this that
\begin{align*}
    \| e_\lambda \|_{L^p (\gamma)} \leq C_p \frac{\lambda^{\frac{1}{3}-\frac{1}{3p}}}{(\log \lambda)^{\frac{1}{2}}},\quad \lambda\geq 1,\quad 2\leq p<4.
\end{align*}
\end{theorem}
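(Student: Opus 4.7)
The strategy is a $TT^*$ argument at the level of $L^p(\gamma)$, combined with a long-time wave-kernel analysis on the Riemannian universal cover in the spirit of B\'erard, Hassell--Tacy, and Chen. Let $P=\sqrt{-\Delta_g}$, set $T=c_0\log\lambda$ for a sufficiently small $c_0>0$, and pick an even nonnegative Schwartz function $\rho$ with $\rho\geq 1$ on $[0,1]$ and $\hat\rho\in C_c^\infty([-1,1])$. A standard spectral-dominance step lets us replace $\mathds{1}_{[\lambda,\lambda+T^{-1}]}(P)$ by $\tilde\chi_\lambda:=\rho(T(\lambda-P))$. By $TT^*$,
\[
\|\mathcal{R}_\gamma\tilde\chi_\lambda\|_{L^2(M)\to L^p(\gamma)}^2=\|\mathcal{R}_\gamma\tilde\chi_\lambda^2\mathcal{R}_\gamma^*\|_{L^{p'}(\gamma)\to L^p(\gamma)},
\]
and Fourier inversion gives the kernel of the right-hand operator as
\[
K_\lambda(s,s')=\frac{1}{2\pi T}\int e^{-it\lambda}\widehat{\rho^2}(t/T)\,(e^{itP})(\gamma(s),\gamma(s'))\,dt,
\]
with $t$-support in $|t|\leq c_1\log\lambda$. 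The explicit prefactor $1/T$ is the uniform-in-$p$ source of the $(\log\lambda)^{-1/2}$ gain after taking a square root.

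\textbf{Short-time contribution.}
Split $K_\lambda=K^{\mathrm{loc}}_\lambda+K^{\mathrm{glob}}_\lambda$ with a smooth cutoff $\beta\in C_c^\infty(\{|t|\leq 1\})$, $\beta\equiv 1$ near $0$. For $K^{\mathrm{loc}}_\lambda$, the standard Hadamard parametrix for $e^{itP}$ on $M$ together with the Burq--G\'erard--Tzvetkov / Hu stationary phase analysis in $s'$ driven by $g(D_t\gamma',D_t\gamma')\neq 0$ reproduces, up to the explicit $T^{-1}$ factor, the $TT^*$ version of Theorem \ref{Theorem Universal Estimates} (interpolated with Hu's $L^4$ bound); this produces
\[
\|K^{\mathrm{loc}}_\lambda\|_{L^{p'}\to L^p}\leq \frac{C_p\,\lambda^{2/3-2/(3p)}}{T},\qquad 2\leq p<4,
\]
with $C_p\to\infty$ as $p\to 4$ owing to the degeneration of the kernel estimate at the critical endpoint.

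\textbf{Long-time contribution.}
For $K^{\mathrm{glob}}_\lambda$ we pass to the universal cover $(\tilde M,\tilde g)$. By Cartan--Hadamard, $\exp_{\tilde p}$ is a global diffeomorphism for every $\tilde p$ and $d_{\tilde g}$ is smooth off the diagonal. With deck group $\Gamma$ and lifts $\tilde x,\tilde y$ of $x,y\in M$,
\[
(e^{itP})(x,y)=\sum_{\alpha\in\Gamma}(e^{it\tilde P})(\tilde x,\alpha\tilde y),
\]
and finite propagation speed restricts the sum to $\alpha$ with $d_{\tilde g}(\tilde x,\alpha\tilde y)\leq c_1\log\lambda$. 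Inserting the global Hadamard parametrix on $\tilde M$ and performing the $t$-integration (which localizes $|\xi|$ to a $T^{-1}$-window around $\lambda$), each $\alpha$-term reduces to a one-dimensional oscillatory integral with phase $r\,d_{\tilde g}(\tilde\gamma(s),\alpha\tilde\gamma(s'))$ and amplitude controlled by Jacobi fields under the pinching $|K_g|\leq 1$. Since $\pi:\tilde M\to M$ and every $\alpha\in\Gamma$ act by isometries, each lifted curve $\alpha\tilde\gamma$ has the same nonvanishing geodesic curvature as $\gamma$, so the BGT/Hu stationary phase in $s'$ applies per-$\alpha$, reproducing the same short-time bound. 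Choosing $c_0$ small enough for the orbit sum to converge (absorbing the exponential Jacobi amplitude loss against the exponential volume growth of $\Gamma$-balls from Bishop--Gromov) yields the same bound on $K^{\mathrm{glob}}_\lambda$, and taking a square root gives the theorem.

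\textbf{Main obstacle.}
The hardest step is the long-time analysis: preserving the full BGT/Hu curvature gain for each lifted curve $\alpha\tilde\gamma$ uniformly in $\alpha$, verifying non-degeneracy of the Hessian of $(s,s')\mapsto d_{\tilde g}(\tilde\gamma(s),\alpha\tilde\gamma(s'))$ for every relevant $\alpha$, and balancing the two competing exponential factors (Jacobi amplitude growth vs.\ orbit-counting growth) in the borderline 2D, non-positive curvature regime. The delicate choice of $c_0$ (depending on the curvature pinching) and a uniform stationary phase analysis across $\alpha$ constitute the technical core of the proof.
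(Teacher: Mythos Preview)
Your overall strategy---$TT^*$ plus long-time wave kernel on the universal cover with a $T^{-1}$ prefactor---matches the paper's framework, and your treatment of the short-time piece is essentially correct. The genuine gap is in the long-time step.

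You assert that for each nontrivial $\alpha\in\Gamma$ ``the BGT/Hu stationary phase in $s'$ applies per-$\alpha$, reproducing the same short-time bound.'' This is not justified and is where the argument breaks. The BGT/Hu curvature gain for curved curves is proved for the phase $d_g(\gamma(s),\gamma(s'))$ with $|s-s'|$ small; the nondegeneracy used there is a local statement about the diagonal. For $\alpha\neq\mathrm{Id}$, the phase is $\phi_\alpha(s,s')=d_{\tilde g}(\tilde\gamma(s),\alpha\tilde\gamma(s'))$ with the two curves at distance up to $c_1\log\lambda$. The mixed Hessian $\partial_s\partial_{s'}\phi_\alpha$ is governed by Jacobi fields along the long connecting geodesic and typically \emph{degenerates} (decays like $1/d$ in the flat case, exponentially in negative curvature), so the oscillatory-integral mechanism behind the curved-curve gain is not the same one, and there is no reason it yields the identical $\lambda^{2/3-2/(3p)}/T$ bound uniformly in $\alpha$. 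You flag this yourself as the ``main obstacle,'' but you do not supply the argument; this is the missing idea, not a technicality.

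The paper avoids this difficulty entirely by inserting, \emph{before} lifting, an angular pseudodifferential decomposition $\sum_{j\le J}Q_j$ that slices the cotangent fiber by the angle $|\xi(N)|/|\xi|_g\approx 2^{-j}$ to the conormal of $\gamma$. For small $j$ (near-tangential), no curvature gain and no cover are needed: interpolating the paper's Proposition~2.4 ($L^2\to L^2$, bound $2^{j/2}$) with an $L^2\to L^4$ bound of $\lambda^{1/4}$ and summing the resulting geometric series in $j$ already yields $C_p\lambda^{1/3-1/(3p)}/T^{1/2}$; this geometric sum is exactly where $C_p\to\infty$ as $p\to 4$ comes from (your proposal does not identify a concrete source for this blow-up). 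Only the narrow top range $2^{-j}\lesssim\lambda^{-1/3+\epsilon}$ is sent to the universal cover, and there the paper does \emph{not} attempt BGT/Hu per $\alpha$. Instead, after stationary phase it proves a support lemma (via the Hessian comparison theorem, using both the nonvanishing curvature of $\gamma$ and the pinching $-1\le K\le 0$) showing that each $\alpha$-kernel is supported in an $O(2^{-j})\times O(2^{-j})$ box in $(r,s)$; a crude Young/interpolation bound then gives $\frac{\lambda^{1/2}}{T}e^{CT}(2^{-j})^{2/p}$ per $\alpha$, which sums because $2^{-j}\approx\lambda^{-1/3}$ beats the $e^{CT}$ losses for $c_0$ small. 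In short: the $Q_j$ decomposition plus a support argument replaces the uniform-in-$\alpha$ oscillatory estimate you are missing.
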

We remark that, by scaling the metric, the above theorem deals with manifolds with nonpositive sectional curvatures. Using Theorem \ref{Theorem Log Improvement}, we can show the following estimate at the critical exponent $p=4$.

\begin{corollary}\label{Cor: at p=4}
Let $(M, g)$ be a compact $2$-dimensional smooth Riemannian manifold (without boundary) with nonpositive sectional curvatures pinched between $-1$ and $0$. Also suppose that $\gamma$ is a fixed unit-length curve with $g(D_t \gamma', D_t \gamma')\not=0$. Then, for a uniform constant $C>0$ and $\lambda\gg 1$,
\begin{align*}
    \|\mathcal{R}_\gamma \circ \mathds{1}_{[\lambda, \lambda+(\log \lambda)^{-1}]} (\sqrt{-\Delta_g}) \|_{L^2 (M)\to L^4 (\gamma)}\leq C \frac{\lambda^{\frac{1}{4}}}{(\log \lambda)^{\frac{1}{8}}}.
\end{align*}
It then follows that
\begin{align*}
    \|e_\lambda \|_{L^4 (\gamma)}\leq C\frac{\lambda^{\frac{1}{4}}}{(\log \lambda)^{\frac{1}{8}} },\quad \lambda\gg 1.
\end{align*}
\end{corollary}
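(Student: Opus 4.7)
The plan is a Bourgain--Sogge style interpolation at the critical exponent, combining Theorem \ref{Theorem Log Improvement} with the improved sup-norm bound of Hassell--Tacy \cite{HassellTacy2015improvement} on nonpositively curved manifolds. Write $T_\lambda = \mathcal{R}_\gamma \circ \mathds{1}_{[\lambda,\lambda+(\log\lambda)^{-1}]}(\sqrt{-\Delta_g})$ for brevity.

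First I would collect the two ingredients. By Theorem \ref{Theorem Log Improvement}, for any $p_1 \in (2,4)$,
\begin{align*}
    \|T_\lambda\|_{L^2(M) \to L^{p_1}(\gamma)} \leq C_{p_1}\, \lambda^{\frac{1}{3}-\frac{1}{3p_1}}\, (\log\lambda)^{-1/2}.
\end{align*}
On the other side, \cite{HassellTacy2015improvement} (extending B\'erard \cite{Berard1977onthewaveequation}) gives
\begin{align*}
    \|\mathds{1}_{[\lambda,\lambda+(\log\lambda)^{-1}]}(\sqrt{-\Delta_g})\|_{L^2(M) \to L^\infty(M)} \leq C\, \lambda^{1/2}\, (\log\lambda)^{-1/2},
\end{align*}
so in particular $\|T_\lambda\|_{L^2(M) \to L^\infty(\gamma)} \leq C\lambda^{1/2}(\log\lambda)^{-1/2}$.

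Next, H\"older's inequality with $\frac{1}{4} = \frac{\theta}{p_1} + \frac{1-\theta}{\infty}$ (so $\theta = p_1/4$) yields
\begin{align*}
    \|T_\lambda\|_{L^2 \to L^4(\gamma)} \leq \|T_\lambda\|_{L^2 \to L^{p_1}(\gamma)}^{p_1/4}\, \|T_\lambda\|_{L^2 \to L^\infty(\gamma)}^{1-p_1/4} \leq C\cdot C_{p_1}^{p_1/4}\cdot \lambda^{\frac{10-p_1}{24}}\cdot (\log\lambda)^{-1/2},
\end{align*}
where a short arithmetic shows the $\log\lambda$ exponent collapses exactly to $-1/2$ for every $p_1 \in (2,4)$. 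Writing $\delta = 4 - p_1 > 0$, the $\lambda$-exponent equals $\tfrac{1}{4}+\tfrac{\delta}{24}$, so proving Corollary \ref{Cor: at p=4} reduces to showing $C_{p_1}^{p_1/4}\, \lambda^{\delta/24} \leq C(\log\lambda)^{3/8}$ for an appropriate $p_1=p_1(\lambda)$.

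Finally, I would set $\delta = c(\log\log\lambda)/\log\lambda$, which turns $\lambda^{\delta/24}$ into $(\log\lambda)^{c/24}$. Tracing through the proof of Theorem \ref{Theorem Log Improvement}, I expect a polynomial blow-up $C_p \lesssim (4-p)^{-b}$ for some $b > 0$; then $C_{p_1}^{p_1/4} \lesssim (\log\lambda)^b$ (up to $\log\log\lambda$ factors), and the balancing condition $b + c/24 \leq 3/8$ is solvable with $c>0$ provided $b < 3/8$. The main obstacle is therefore to extract, from the proof of Theorem \ref{Theorem Log Improvement}, this precise polynomial rate for $C_p$ as $p \to 4$; this is a constant-tracking exercise through the Bourgain-style dyadic summation \cite{Bourgain1991Besicovitch} underlying the subcritical log-improvement.
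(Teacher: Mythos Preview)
Your interpolation scheme has a genuine quantitative gap. The entire argument hinges on the blow-up rate $C_p \lesssim (4-p)^{-b}$ with $b<3/8$, but the proof of Theorem~\ref{Theorem Log Improvement} produces exactly
\[
C_p \;=\; \frac{2C}{1-2^{-\frac12(\frac4p-1)}} \;\sim\; \frac{C'}{4-p}\qquad (p\to 4),
\]
coming from the geometric sum $\sum_{j\le J}2^{\frac j2(\frac4p-1)}$; that is, $b=1$. Optimizing your bound $C_{p_1}^{p_1/4}\lambda^{\delta/24}(\log\lambda)^{-1/2}$ over $\delta=4-p_1$ with $b=1$ gives $\delta\sim 24/\log\lambda$ and a final estimate of order $\lambda^{1/4}(\log\lambda)^{+1/2}$, which is \emph{worse} than the universal bound $\lambda^{1/4}$. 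There is no reason to expect a different proof of Theorem~\ref{Theorem Log Improvement} to yield $b<3/8$: the dyadic pieces $Q_j$ with $2^j$ near $\lambda^{1/3}$ each already saturate the $L^4$ bound $\lambda^{1/4}$, so summing $\approx\log\lambda$ of them without further cancellation forces $b\ge 1$.

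The paper avoids this by never invoking Theorem~\ref{Theorem Log Improvement} for $p$ near $4$. It uses only the $p=2$ case, $\|T_\lambda\|_{L^2\to L^2(\gamma)}\lesssim\lambda^{1/6}(\log\lambda)^{-1/2}$, together with a Bourgain-type level-set argument: for $\alpha\le\lambda^{1/3}$ one uses Chebyshev and the $L^2$ bound; for $\alpha>\lambda^{1/3}$ one decomposes the superlevel set into well-separated pieces of length $r=\lambda\alpha^{-4}(\log\lambda)^{-2}$, applies the short-segment estimate $\|\chi(\lambda-P)f\|_{L^2(\gamma_l)}\lesssim\lambda^{1/4}l^{1/4}\|f\|_2$ on the diagonal blocks, and uses B\'erard's pointwise kernel bound $|\chi^2(T(\lambda-P))(x,y)|\lesssim T^{-1}(\lambda/\rho(x,y))^{1/2}+\lambda^{1/2}e^{CT}$ on the off-diagonal blocks. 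This yields the weak-type bound $\|T_\lambda\|_{L^2\to L^{4,\infty}(\gamma)}\lesssim\lambda^{1/4}(\log\lambda)^{-1/4}$. One then interpolates in Lorentz spaces with the Bak--Seeger endpoint $\|\mathds{1}_{[\lambda,\lambda+1]}(P)\|_{L^2\to L^{4,2}(\gamma)}\lesssim\lambda^{1/4}$ via $\|u\|_{L^4}\lesssim\|u\|_{L^{4,\infty}}^{1/2}\|u\|_{L^{4,2}}^{1/2}$ to obtain $(\log\lambda)^{-1/8}$. The essential new inputs you are missing are the short-segment $L^2$ restriction lemma and the B\'erard kernel bound, which together replace the lossy $L^{p_1}$--$L^\infty$ H\"older interpolation by a sharper $TT^*$ analysis at the level of the distribution function.
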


This corollary is a curved curve analogue to Blair \cite[Theorem 1.1]{Blair2018logarithmic}, and Xi and Zhang \cite[Theorem 1, Theorem 2]{XiZhang2017improved}.

\subsection*{Outline of the work}
Even though Theorem \ref{Theorem Universal Estimates} is already proved in \cite[Theorem 2]{BurqGerardTzvetkov2007restrictions} and \cite[Theorem 2]{Hu2009lp}, we go through a variation of the proof of Theorem \ref{Theorem Universal Estimates} in \S \ref{S:Proof of universal estimates}, since we need some results from the proof to show Theorem \ref{Theorem Log Improvement}.

In \S \ref{S:Reduction to Universal Estimates}, we introduce some tools to prove Theorem \ref{Theorem Universal Estimates}. We will use pseudo-differential cutoffs $Q_j$ as in \cite{BlairSogge2018concerning} to reduce our problem to Proposition \ref{Prop: I-Qj estimates}, \ref{Prop: QJ estimates}, and \ref{Prop: q plus minus estimates}. The support properties of the $Q_j$ in $\xi$ are similar to a partition of unity in \cite[Section 6]{BurqGerardTzvetkov2007restrictions}.

We will prove Theorem \ref{Theorem Universal Estimates} by showing Proposition \ref{Prop: I-Qj estimates}, \ref{Prop: QJ estimates}, and \ref{Prop: q plus minus estimates} in \S \ref{S:Proof of universal estimates}. Stationary phase arguments, Young's inequality, and Egorov's theorem (cf. \cite{Sogge2014hangzhou}, \cite{Zworski2012Semiclassical}) will be the key points in the section.

By using Proposition \ref{Prop: I-Qj estimates} and \ref{Prop: q plus minus estimates}, we reduce Theorem \ref{Theorem Log Improvement} to a simpler version in \S \ref{S:Prop for large j}. To show the reduced estimates, we lift the remaining problem to the universal cover of the given manifold by the Cartan-Hadamard theorem. We will use the Hadamard parametrix there to compute the remaining part. We will need Proposition \ref{Prop: SP results in Log Improvement} to convert our problem to oscillatory integral operator problems. To finish the proof of Theorem \ref{Theorem Log Improvement}, we may need support properties of the oscillatory integral operators. We will use the Hessian comparison there (cf. \cite[Theorem 11.7]{Lee2018secondEd}) to figure out the support properties.

Using Theorem \ref{Theorem Log Improvement}, and the strategies in Xi and Zhang \cite{XiZhang2017improved}, Sogge \cite{Sogge2017ImprovedCritical}, and Bourgain \cite{Bourgain1991Besicovitch}, we will show Corollary \ref{Cor: at p=4} in the last section.

\subsection*{Notation}
\begin{enumerate}
    \item For nonnegative numbers $A$ and $B$, $A\lesssim B$ means $A\leq CB$ for some uniform constant $C>0$ which depends only on the manifold under consideration.
    \item $A\approx B$ means $cB\leq A \leq CB$ for some uniform constants $c>0$ and $C>0$.
    \item The constant $C>0$ may be assumed to be a uniform constant, if there is no further notice. The uniform constant $C>0$ can also be different from each other at any different lines.
    \item For geometric terminologies, the notation draws largely from Lee \cite{Lee2018secondEd}.
    \item For terminologies of the pseudodifferential operator theory and Egorov's theorem, the notation draws largely from Sogge \cite{Sogge1993fourier}, \cite{Sogge2014hangzhou}, and Zworski \cite{Zworski2012Semiclassical}.
    \item We use $\rho (x, y)$ for the Riemannian distance between $x$ and $y$.
    \item Certain variables may be redefined in different places when the arguments there are independent of each other. For example,
        \begin{itemize}
            \item $\nabla$ may represent the gradient of functions in some places, and may be the Levi-Civita connection in other places.
            \item $\alpha$ may represent a multiindice in some places, and may be deck transformations in other places, defined in the context of the universal cover of the base manifold $M$.
            \item $\partial$ usually represents partial derivative, but $\partial^2 \phi$ represents the Hessian of $\phi$.
            \item $N$ may represent a unit normal vector to a given curve $\gamma$ in some places, but may represent integers $N=1, 2, 3, \cdots$ in other places.
            \item Tildes over letters usually denote the corresponding letters in the universal cover of the base manifold, but sometimes, we also use letters with tildes (or bars) when changing variables if needed.
            \item $\epsilon>0$ appears in many places, and the meanings of $\epsilon>0$ there may be slightly different, but all of them are sufficiently small but fixed at the end of the computations in each section,
        \end{itemize}
and so on. However, the context in which we are using the notations will be clear.
\end{enumerate}

\subsection*{Acknowledgements}
The author would like to thank his Ph.D. advisor Matthew D. Blair for suggesting the problem, for helpful insights, for unlimited patience, and for his guide with numerous details in the course of this work. The author was supported in part by the National Science Foundation grants DMS-1301717 and DMS-1565436.

\section{Some Tools and Reductions for Theorem \ref{Theorem Universal Estimates}}\label{S:Reduction to Universal Estimates}

Let $P=\sqrt{-\Delta_g}$. For some $\epsilon_0>0$ sufficiently small, let $\chi\in \mathcal{S}(\mathbb{R})$ be an even function such that
\begin{align}\label{epsilon0 and chi}
    \chi(0)=1,\quad \chi(t)>0 \text{ for } |t|\leq 1,\quad  \mathrm{supp}(\widehat{\chi})\subset \{t: \epsilon_0/2\leq |t|\leq \epsilon_0\}, \quad \mathrm{supp}(\widehat{\chi^2})\subset [-2\epsilon_0, 2\epsilon_0],
\end{align}
so that
\begin{align*}
    \chi(\lambda-P)e_\lambda =e_\lambda.
\end{align*}
Assume that $\gamma$ has a unit-speed (parametrized by arc-length). With this in mind, to prove \eqref{Universal Estimates for Nonvanishing Curvature}, we now want to show
\begin{align}\label{Universal Estimate Chi Reduction}
    \|\chi(\lambda-P) f \|_{L^2(\gamma)}\lesssim \lambda^{\frac{1}{6}}\|f\|_{L^2 (M)},
\end{align}
that is, we can replace the spectral projector $\mathds{1}_{[\lambda, \lambda+1]} (P)$ by $\chi (\lambda-P)$. Indeed, the operator $\chi (\lambda-P)$ is invertible on the range of the spectral projector $\mathds{1}_{[\lambda, \lambda+1]} (P)$ and
\begin{align*}
    \| \chi(\lambda-P)^{-1} \circ \mathds{1}_{[\lambda, \lambda+1]} (P) \|_{L^2 (M)\to L^2 (M)} \lesssim 1,
\end{align*}
and so, it suffices to show \eqref{Universal Estimate Chi Reduction}.

Fix $\chi_0\in C_0^\infty (\mathbb{R})$ satisfying $\chi_0(t)=1$ for $|t|\leq 1$ and $\chi_0(t)=0$ for $|t|\geq 2$. We also fix $\Tilde{\chi}_0 \in C_0^\infty (\mathbb{R})$ that satisfies $\Tilde{\chi}_0 (t)=1$ for $|t|\leq 3$ and $\Tilde{\chi}_0 (t)=0$ for $|t|\geq 4$. Choose a Littlewood-Paley bump function $\chi_1 \in C_0^\infty (\mathbb{R})$ that satisfies $\chi_1(t)=0$ if $t\not\in (1/2, 2)$ so that we write
\begin{align*}
    \sum_{j=-\infty}^\infty \chi_1 ( 2^j t )=1,\quad \text{for } t\not=0.
\end{align*}

We will use Fermi coordinates frequently in the rest of this article. We recall basic properties of Fermi coordinates briefly here. Let $\gamma$ and $M$ be as above, let $N$ be an element of the normal bundle $N\gamma$, let $\mathcal{E}\subset TM$ be the domain of the exponential map of $M$, let $\mathcal{E}_p=\mathcal{E}\cap N\gamma$, let $E:\mathcal{E}_p \to M$ be the restriction of $\mathrm{exp}$ (the exponential map of $M$) to $\mathcal{E}_p$, and let $U\subset M$ be a normal neighborhood of $\gamma$ with $U=E(V)$ for an appropriate open subset $V\subset N\gamma$. If $(W_0, \psi)$ is a smooth coordinate chart for $\gamma$, we define $B:\psi(W_0)\times \mathbb{R}\to N\gamma|_{W_0}$ by
\begin{align*}
    B(x_1, v_1)=(q, v_1 N|_q),
\end{align*}
by shrinking $W_0$ if necessary. Setting $V_0=V\cap N\gamma|_{W_0}\subset N\gamma$ and $U_0=E(V_0)\subset M$, we define a smooth coordinate map $\varphi: U_0\to \mathbb{R}^2$ by $\varphi=B^{-1}\circ (E|_{V_0})^{-1}$,
\begin{align*}
    \varphi: E(q, v_1 N_q)\mapsto (x_1 (q), v_1).
\end{align*}
Coordinates of this form are called Fermi coordinates. We list here properties of Fermi coordinates from \cite[Proposition 5.26]{Lee2018secondEd}.
\begin{enumerate}
    \item $\gamma\cap U_0$ is the set of points where $v_1=0$.
    \item At each point $q\in \gamma\cap U_0$, the metric components satisfy that
    \[
        g_{ij}=g_{ji}=\begin{cases}
        0, & i=1 \quad \text{and} \quad j=2, \\
        1, & i=j=2.
        \end{cases}
    \]
    \item For every $q\in \gamma\cap U_0$ and $v=v_1 E_1|_q\in N_q \gamma$, the geodesic $\gamma_v$ starting at $q$ with initial velocity $v$ is the curve with coordinate expression $\gamma_v (t)=(x_1 (q), tv_1)$.
\end{enumerate}
For detail, see \cite[Chapter 2]{Gray2004Tubes}, \cite[Chapter 5]{Lee2018secondEd}, etc. If we identify a covector $\xi$ with a vector, then, in Fermi coordinates, we have
\begin{align*}
    |\xi|_{g(x)}=g^{11}(x)\xi_1^2+\xi_2^2,\quad \text{for } x\in \gamma,
\end{align*}
where $(g^{ij})=(g_{ij})^{-1}$. Also, we observe that $g^{11}(x_1, 0)=1$ for $x=(x_1, 0)\in \gamma$ in Fermi coordinates, by the arc-length parametrization.

Suppose $\xi$ is a covector and $N$ is a unit vector field normal to $\gamma$. Here, $\xi(N)$ means $\langle \xi^\#, N \rangle_g$, where $\xi^\#$ is the sharp of $\xi$ as a musical isomorphism. In Fermi coordinates, $N=\frac{\partial}{\partial x_2}$. Set $J=\lfloor \log_2 \lambda^{\frac{1}{3}} \rfloor$. We write
\begin{align*}
    1=\sum_{j=-\infty}^{J-1} \chi_1 \left(2^j \frac{|\xi(N)|}{|\xi|_g} \right)+\Tilde{\chi}_J \left(\lambda^{\frac{1}{3}} \frac{|\xi(N)|}{|\xi|_g} \right),
\end{align*}
where
\begin{align*}
    \Tilde{\chi}_J (t)=1-\sum_{j=-\infty}^{J-1} \chi_1 (t),\quad \Tilde{\chi}_J \in C_0^\infty (\mathbb{R}),\quad \mathrm{supp} (\Tilde{\chi}_J) \subset \{t: |t|\lesssim 1\}.
\end{align*}
Here, if $j\ll 0$, the term $\chi_1 (2^j |\xi(N)|/|\xi_g|)$ is zero, and thus, the sum is a finite sum, since $|\xi(N)|\lesssim |\xi|_g$.

We will consider decomposition using pseudodifferential cutoffs in Smith and Sogge \cite{SmithSogge2007Boundary}, and Blair \cite{Blair2013LowRegularity}. In Fermi coordinates, if $j\leq J-1$, we define the compound symbols
\begin{align}\label{Construction of the compound symbol qj}
    q_j (x, y, \xi)=\chi_0 (\epsilon_0^{-1} \rho (x, \gamma)) \Tilde{\chi}_0 (\epsilon_0^{-1} \rho (y, \gamma)) \chi_1 (2^j |\xi_2|/|\xi|_g) \Upsilon (|\xi|_g/\lambda),
\end{align}
where $d_g=\rho$, and $\Upsilon \in C_0^\infty (\mathbb{R})$ satisfies
\begin{align*}
    \Upsilon (t)=1,\; \text{for } t\in [c_1, c_1^{-1}],\quad \Upsilon (t)=0,\; \text{for } t\not\in \left[\frac{c_1}{2}, 2c_1^{-1}\right],
\end{align*}
with a small fixed number $c_1>0$. Invariantly, we can also define the compound symbols by
\begin{align*}
    q_j (x, y, \xi)=\chi_0 (\epsilon_0^{-1} \rho (x, \gamma)) \Tilde{\chi}_0 (\epsilon_0^{-1} \rho (y, \gamma)) \chi_1 \left(2^j\frac{|\xi(N)|}{|\xi|_g}\right)\Upsilon(|\xi|_g/\lambda).
\end{align*}
If $j=J$, we define, in Fermi coordinates,
\begin{align*}
    q_J (x, y, \xi)=\chi_0 (\epsilon_0^{-1} \rho (x, \gamma)) \Tilde{\chi}_0 (\epsilon_0^{-1} \rho (y, \gamma)) \Tilde{\chi}_J (\lambda^{\frac{1}{3}}|\xi_2|/|\xi|_g) \Upsilon(|\xi|_g/\lambda),\quad 0<\epsilon_0\ll 1,
\end{align*}
or invariantly,
\begin{align*}
    q_J (x, y, \xi)=\chi_0 (\epsilon_0^{-1} \rho (x, \gamma)) \Tilde{\chi}_0 (\epsilon_0^{-1} \rho (y, \gamma)) \Tilde{\chi}_J \left(\lambda^{\frac{1}{3}}\frac{|\xi(N)|}{|\xi|_g}\right) \Upsilon(|\xi|_g/\lambda), \quad 0<\epsilon_0\ll 1.
\end{align*}

Let $Q_j$ be the pseudodifferential operator with compound symbol $q_j$ whose kernel $Q_j (x, w)$ is defined by
\begin{align}\label{Definition of Qj kernel}
    Q_j(x, w)=\frac{1}{(2\pi)^2} \int e^{i(x-w)\cdot \eta} q_j (x, w, \eta) \:d\eta.
\end{align}
As in \cite{BlairSogge2018concerning}, in Fermi coordinates, we know from the homogeneity in $\xi$ and $|\xi|\approx \lambda$ that
\begin{align}\label{Symbol Q properties}
    \begin{split}
        & |D_{x, w}^\beta D_{\xi_1}^{\alpha_1} D_{\xi_2}^{\alpha_2} q_j (x, w, \xi)|\leq C_{\alpha_1, \alpha_2, \beta} 2^{j|\alpha_2|} \lambda^{-|\alpha_1|-|\alpha_2|},\quad \text{for all } \alpha_1, \alpha_2,\\
        & |\partial_{x, w}^\beta Q_j (x, w)|\leq C_N 2^{-j} \lambda^{2+|\beta|} (1+\lambda |x_1-y_1|+\lambda 2^{-j}|x_2-y_2|)^{-N},\quad \text{for } N=1, 2, 3, \cdots, \\
        & \sup_x \int |Q_j (x, w)|\:dw,\quad \sup_w \int |Q_j (x, w)|\:dx \lesssim 1.
    \end{split}
\end{align}
Now, for \eqref{Universal Estimate Chi Reduction}, we are reduced to showing that
\begin{align}\label{Qj estimates}
    \|\sum_{j\leq J} Q_j \circ \chi (\lambda-P) f \|_{L^2(\gamma)} \lesssim \lambda^{\frac{1}{6}} \|f\|_{L^2 (M)},
\end{align}
and
\begin{align}\label{I-Qj estimates}
    \|(I-\sum_{j\leq J} Q_j)\circ \chi(\lambda-P) f\|_{L^2(\gamma)} \leq C_N \lambda^{-N} \|f \|_{L^2(M)},\quad N=1, 2, 3, \cdots.
\end{align}
The estimate \eqref{I-Qj estimates} follows from Young's inequality and the analysis of its kernel.

\begin{proposition}\label{Prop: I-Qj estimates}
The kernel $(I-\sum_{j\leq J} Q_j)\circ \chi(\lambda-P) (x, y)$ satisfies
\begin{align*}
    (I-\sum_{j\leq J} Q_j)\circ \chi(\lambda-P) (x, y)=O(\lambda^{-N}),
\end{align*}
for any $N\geq 1$.
\end{proposition}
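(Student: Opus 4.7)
The plan rests on two complementary facts: finite speed of propagation of the half-wave group whenever the spatial arguments are well-separated, and symbolic cancellation between $I$ and $\sum_{j\leq J}Q_j$ on the essential frequency support of $\chi(\lambda-P)$. By Fourier inversion,
\begin{align*}
    \chi(\lambda-P) = \frac{1}{2\pi}\int \widehat{\chi}(t)\,e^{it\lambda}\,e^{-itP}\,dt,
\end{align*}
and since $\mathrm{supp}(\widehat{\chi})\subset\{\epsilon_0/2\leq |t|\leq\epsilon_0\}$ only the small-time half-wave kernel contributes. I would fix $x\in\gamma$, so that $\chi_0(\epsilon_0^{-1}\rho(x,\gamma))=1$, and split the $y$-domain according to whether $\rho(y,\gamma)$ exceeds $4\epsilon_0$.

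In the far region $\rho(y,\gamma)\geq 4\epsilon_0$, the cutoff $\tilde{\chi}_0(\epsilon_0^{-1}\rho(y,\gamma))=0$, so $\sum_{j\leq J}q_j(x,y,\xi)\equiv 0$, and the claim reduces to controlling $\chi(\lambda-P)(x,y)$ and $\sum_{j\leq J}(Q_j\chi(\lambda-P))(x,y)$ separately. For the first, $\rho(x,y)\geq 4\epsilon_0>\epsilon_0\geq |t|$ on $\mathrm{supp}(\widehat{\chi})$ together with finite propagation makes $e^{-itP}(x,y)$ smooth in $(t,x,y)$, and iterated integration by parts in $t$ via $e^{it\lambda}=(i\lambda)^{-1}\partial_t e^{it\lambda}$ then produces $O(\lambda^{-N})$. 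For the second, the rapid off-diagonal decay recorded in \eqref{Symbol Q properties} localizes the $z$-integration to $|x-z|\ll\epsilon_0$ up to an $O(\lambda^{-N})$ tail, and on that set $\rho(z,y)\geq 3\epsilon_0$ again delivers $\chi(\lambda-P)(z,y)=O(\lambda^{-N})$; the bound $\sup_x\int|Q_j(x,z)|\,dz\lesssim 1$, together with the $O(\log\lambda)$ number of scales $j\leq J$, closes this case.

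In the near region $\rho(y,\gamma)\leq 4\epsilon_0$, the Hadamard parametrix yields
\begin{align*}
    \chi(\lambda-P)(z,y) = \int e^{i\phi(z,y,\xi)}\,A(z,y,\xi,\lambda)\,d\xi + O(\lambda^{-\infty}),
\end{align*}
where $\phi$ solves the eikonal equation and, owing to the Schwartz decay of $\chi$, the amplitude $A$ is rapidly decreasing outside the frequency band $|\xi|_g-\lambda=O(1)$. Composing with $\sum_{j\leq J}Q_j$ and applying the method of stationary phase in the intermediate $(z,\eta)$ variables collapses each $Q_j\chi(\lambda-P)$ to an oscillatory integral whose leading amplitude is $q_j(x,x,\partial_z\phi(x,y,\xi))\,A(x,y,\xi,\lambda)$. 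Telescoping in $j$ then gives
\begin{align*}
    \sum_{j\leq J}q_j(x,x,\eta) = \chi_0(\epsilon_0^{-1}\rho(x,\gamma))\,\tilde{\chi}_0(\epsilon_0^{-1}\rho(x,\gamma))\,\Upsilon(|\eta|_g/\lambda),
\end{align*}
which for $x\in\gamma$ equals $\Upsilon(|\eta|_g/\lambda)$. Since the eikonal equation keeps $|\partial_z\phi|_g$ comparable to $|\xi|_g$, and the essential support of $A$ lies in $|\xi|_g\approx\lambda$, choosing $c_1$ small in the definition of $\Upsilon$ makes $\Upsilon(|\partial_z\phi|_g/\lambda)\equiv 1$ on that support; the leading symbol of $I-\sum_{j\leq J}Q_j$ therefore vanishes there, and iterating the asymptotic expansion (or integrating by parts in $\xi$) upgrades this to $O(\lambda^{-N})$.

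The principal technical subtlety is that the individual $q_j$ sit in distinct second-microlocal-type symbol classes, with $\xi_2$-derivative losses $2^{j|\alpha_2|}$ growing up to $\lambda^{|\alpha_2|/3}$ when $j=J$, so uniform symbolic calculus in $j$ would be delicate. This is bypassed by performing the sum before the composition: the aggregate symbol $\sum_{j\leq J}q_j(x,y,\xi) = \chi_0(\epsilon_0^{-1}\rho(x,\gamma))\,\tilde{\chi}_0(\epsilon_0^{-1}\rho(y,\gamma))\,\Upsilon(|\xi|_g/\lambda)$ lies in the standard class $S^0_{1,0}$, so $\sum_{j\leq J}Q_j$ is an honest pseudodifferential operator and the usual composition calculus with the FIO $\chi(\lambda-P)$ applies without any $j$-dependent loss.
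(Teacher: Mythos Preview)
Your proposal is correct and rests on the same key observation as the paper's proof: the telescoped symbol $\sum_{j\leq J}q_j=\chi_0\tilde{\chi}_0\Upsilon$ lies in the standard class $S^0_{1,0}$, so $I-\sum_{j\leq J}Q_j$ has a symbol vanishing on the frequency support of $\chi(\lambda-P)$ and the composition is $O(\lambda^{-N})$. The paper packages this slightly differently---writing the composed kernel as a single oscillatory integral and integrating by parts in $(t,z)$ rather than splitting into far/near $y$-regions and invoking finite propagation plus stationary phase---but the substance is the same.
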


We will talk about this later in \S \ref{SS: I-Qj estimates}. To see \eqref{Qj estimates}, we consider $j=J$ separately.

\begin{proposition}\label{Prop: QJ estimates}
We have
\begin{align*}
    \| Q_J \circ \chi(\lambda-P) f\|_{L^2 (\gamma)}\lesssim \lambda^{\frac{1}{6}} \|f\|_{L^2 (M)}.
\end{align*}
\end{proposition}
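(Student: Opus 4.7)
The plan is to prove Proposition \ref{Prop: QJ estimates} by a $TT^{*}$ argument combined with a Hadamard parametrix analysis of the kernel on $\gamma\times\gamma$. Setting $T=\mathcal{R}_{\gamma}\circ Q_{J}\circ \chi(\lambda-P)$, we have $\|T\|_{L^{2}(M)\to L^{2}(\gamma)}^{2}=\|TT^{*}\|_{L^{2}(\gamma)\to L^{2}(\gamma)}$, so it suffices to show
\begin{equation*}
  \|Q_{J}\circ \chi^{2}(\lambda-P)\circ Q_{J}^{*}\|_{L^{2}(\gamma)\to L^{2}(\gamma)}\lesssim \lambda^{1/3}.
\end{equation*}
By Schur's test (Young's inequality), this will follow from a pointwise kernel estimate and integration along $\gamma$.

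To analyze the kernel, I would use the Fourier representation
\begin{equation*}
  \chi^{2}(\lambda-P)=\frac{1}{2\pi}\int \widehat{\chi^{2}}(t)\,e^{it\lambda}\,e^{-itP}\,dt,
\end{equation*}
with $\widehat{\chi^{2}}$ supported in $[-2\epsilon_{0},2\epsilon_{0}]$. Choosing $\epsilon_{0}$ smaller than half the injectivity radius allows a Hadamard (Lax) parametrix representation of $e^{-itP}(x,y)$ as an oscillatory integral with phase $\phi(x,y,\xi)-t|\xi|_{g}$ and classical symbol. Composing with the compound symbols $q_{J}$ and $q_{J}^{*}$, and applying Egorov's theorem to push the $Q_{J}$'s through $e^{-itP}$, I can write the kernel $K(x,y)$ of $Q_{J}\chi^{2}(\lambda-P)Q_{J}^{*}$ restricted to $\gamma\times\gamma$ in Fermi coordinates as
\begin{equation*}
  K(x,y)=\int\!\!\int e^{i(\phi(x,y,\xi)+t(\lambda-|\xi|_{g}))}\widetilde{a}(t,x,y,\xi)\,\widehat{\chi^{2}}(t)\,\widetilde{q}_{J}(x,y,\xi)\,dt\,d\xi+O(\lambda^{-\infty}),
\end{equation*}
where $\widetilde{q}_{J}$ inherits the localization $|\xi|_{g}\approx\lambda$ and $|\xi(N)|\lesssim \lambda^{2/3}$ from $q_{J},q_{J}^{*}$ (up to negligible terms produced by Egorov's theorem).

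Next, stationary phase in $t$ pins $|\xi|_{g}=\lambda$ and yields a $\lambda^{-1/2}$ prefactor. Introducing polar-type coordinates $\xi=|\xi|\omega$ on the cosphere and using the Fermi-coordinate expressions $x=(x_{1},0),y=(y_{1},0)$, the remaining phase restricted to $\gamma\times\gamma$ reads, to leading order, $\lambda\rho(x,y)\cos\alpha$, where $\alpha$ is the angle between $\omega$ and the tangent to $\gamma$ at $x$. The nonvanishing geodesic curvature of $\gamma$ enters through the expansion
\begin{equation*}
  \rho(x,y)=|x_{1}-y_{1}|-\tfrac{\kappa(\bar{x})^{2}}{24}|x_{1}-y_{1}|^{3}+O(|x_{1}-y_{1}|^{5}),
\end{equation*}
which provides the cubic correction in the phase that distinguishes the tangential-sector analysis for a curved $\gamma$ from the geodesic case. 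Performing stationary phase (or repeated integration by parts) in the angular variable $\alpha$ over the sector $|\alpha|\lesssim \lambda^{-1/3}$ of $q_{J}$, and using that $\cos\alpha-1\approx -\alpha^{2}/2$ together with the cubic $\rho$-correction, produces a pointwise bound on $K(x,y)$ whose integral along $\gamma$ is $\lesssim \lambda^{1/3}$. Schur's test then yields the claimed $L^{2}(\gamma)$ operator bound on $TT^{*}$, and hence the proposition.

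The main obstacle is the precise phase/amplitude analysis in the $\xi$-integration after the $t$-stationary phase: one must exploit simultaneously the angular localization $|\alpha|\lesssim \lambda^{-1/3}$ coming from $q_{J}$ and the cubic character of $\rho(x,y)$ along $\gamma$ forced by $g(D_{t}\gamma',D_{t}\gamma')\neq 0$. It is exactly the interplay between these two scales that produces the $\lambda^{1/6}$ improvement over the $\lambda^{1/4}$ universal bound obtained for geodesics. Once the resulting pointwise kernel estimate is in hand, Young's inequality / Schur's test finishes the argument routinely.
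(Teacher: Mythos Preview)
Your proposal follows a BGT/Hu-style route (stationary phase in $t$, then an angular analysis invoking the cubic term in the expansion of $\rho(\gamma(r),\gamma(s))$), but this is \emph{not} how the paper proceeds, and your sketch has a gap at precisely the step where the curvature is supposed to enter.

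The paper does not perform stationary phase in $t$ at all. After Egorov's theorem and a preliminary stationary phase in the compound variables (Lemma~\ref{Lemma Semiclassical SP QJ}), the kernel is written as
\[
K_J(x,y)=\lambda^{2}\iint e^{i\lambda[t+\varphi(t,x,\xi)-y\cdot\xi]}\,\widehat{\chi^{2}}(t)\,\Tilde{a}_J(t,x,y,\xi)\,d\xi\,dt,
\]
where $\Tilde{a}_J$ is supported where both $|\xi_2|\lesssim\lambda^{-1/3}$ and $|\partial_{x_2}\varphi(t,x,\xi)|\lesssim\lambda^{-1/3}$, i.e.\ the normal component of the momentum is small at time $0$ \emph{and} at time $t$. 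The crucial observation is Lemma~\ref{Lemma xi2 derivative of varphi qJ}: in Fermi coordinates the curvature hypothesis gives $\partial_{x_2}g^{11}\neq 0$ along $\gamma$, hence $\dot\xi_2(s)\neq 0$ along the Hamiltonian flow (Lemma~\ref{Lemma xi2 dot positive qJ}), so the two smallness conditions can hold simultaneously only when $|t|\lesssim\lambda^{-1/3}$. Thus $\Tilde{a}_J$ is supported in $|t|\lesssim\lambda^{-1/3}$. With this in hand, a single integration by parts in $\xi_1$ and Young's inequality give
\[
\int|K_J(x_1,0,y_1,0)|\,dy_1\;\lesssim\;\lambda^{2}\cdot\lambda^{-1}\cdot\lambda^{-1/3}\cdot\lambda^{-1/3}=\lambda^{1/3},
\]
the three gains coming from the $y_1$-integral, the $t$-support, and the $\xi_2$-support respectively; the $x_1$-integral is handled symmetrically using $|\partial^{2}_{x_1\xi_1}\varphi|\gtrsim 1$.

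Your proposed mechanism---the cubic correction in $\rho(\gamma(r),\gamma(s))$ combined with the angular localization---is never invoked in the paper, and in your write-up the sentence ``performing stationary phase in $\alpha$ \ldots\ together with the cubic $\rho$-correction produces a pointwise bound whose integral is $\lesssim\lambda^{1/3}$'' is exactly the place where a proof is needed and none is given: you do not state the pointwise bound, nor why it integrates to $\lambda^{1/3}$, nor how the phases interact once $t$ has been eliminated. By contrast, the paper's argument makes the role of curvature completely explicit (it forces the $t$-support to shrink) and reduces everything to elementary size estimates; this $t$-support lemma is also what the paper reuses in \S\ref{S:Prop for large j} for the logarithmic improvement, so it is not an incidental choice.
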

We will talk about this proposition in the next section. Assuming this proposition is true, we would have \eqref{Qj estimates} if we could show that
\begin{align*}
    \sum_{j\leq J-1} \| Q_j \circ \chi(\lambda-P) f\|_{L^2 (\gamma)} \lesssim \lambda^{\frac{1}{6}} \|f\|_{L^2 (M)},
\end{align*}
which follows from
\begin{align*}
    \| Q_j \circ \chi(\lambda-P) f\|_{L^2 (\gamma)}\lesssim 2^{\frac{j}{2}} \|f\|_{L^2 (M)},\quad j\leq J-1.
\end{align*}
To see this, we split $Q_j$ into two operators $Q_{j, \pm}$ 
\begin{align*}
    Q_j =Q_{j, +}+Q_{j, -},
\end{align*}
where the compound symbols $q_{j, \pm}$ of the $Q_{j, \pm}$ are
\begin{align*}
    q_{j, \pm} (x, y, \xi)=\chi_0 (x_2) \Tilde{\chi}_0 (y_2) \chi_1 (\pm 2^j \xi_2/|\xi|_g) \Upsilon (|\xi|_g/\lambda),
\end{align*}
in Fermi coordinates. We would have \eqref{Qj estimates} if we could show the following.

\begin{proposition}\label{Prop: q plus minus estimates}
If $j\leq J-1$, we have
\begin{align*}
    \| Q_{j, +} \circ \chi(\lambda-P) f\|_{L^2 (\gamma)}\lesssim 2^{\frac{j}{2}} \|f\|_{L^2 (M)}, \quad \text{and} \quad \| Q_{j, -} \circ \chi(\lambda-P) f\|_{L^2 (\gamma)} \lesssim 2^{\frac{j}{2}} \|f\|_{L^2 (M)}.
\end{align*}
\end{proposition}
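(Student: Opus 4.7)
The plan is a $TT^*$ argument. Set $T_{j,\pm} = \mathcal{R}_\gamma \circ Q_{j,\pm} \circ \chi(\lambda - P)$, so that the bound $\|T_{j,\pm}\|_{L^2(M)\to L^2(\gamma)} \lesssim 2^{j/2}$ is equivalent to $\|T_{j,\pm}T_{j,\pm}^{*}\|_{L^2(\gamma)\to L^2(\gamma)} \lesssim 2^j$. Since $\chi$ is real-valued (even), $\chi(\lambda-P)^{*}=\chi(\lambda-P)$ and $\chi(\lambda-P)^2 = \chi^2(\lambda-P)$, so the composition $T_{j,\pm}T_{j,\pm}^{*}$ has integral kernel
\[
    K_{j,\pm}(s,s') \;=\; \bigl[Q_{j,\pm}\,\chi^2(\lambda-P)\,Q_{j,\pm}^{*}\bigr]\bigl(\gamma(s),\gamma(s')\bigr),
\]
and by Schur's test it suffices to show $\sup_{s}\int |K_{j,\pm}(s,s')|\,ds' \lesssim 2^{j}$.

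To produce a usable form for $K_{j,\pm}$, I would first write
\[
    \chi^2(\lambda-P) \;=\; \frac{1}{2\pi}\int \widehat{\chi^2}(t)\,e^{it\lambda}\,e^{-itP}\,dt,
\]
so by \eqref{epsilon0 and chi} only times $|t|\le 2\epsilon_0$ contribute; for $\epsilon_0$ small this is well within the injectivity radius, and the Hadamard (Lax) parametrix represents $e^{-itP}(z,y)$ modulo $O(\lambda^{-N})$ as an oscillatory integral with phase $\langle \exp_{y}^{-1}(z),\xi\rangle - t|\xi|_{g(y)}$ and an order-zero amplitude. The $t$-integration collapses into the factor $\chi^2(\lambda-|\xi|_{g(y)})$, which restricts $|\xi|_g$ to a unit neighborhood of $\lambda$. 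Composing with $Q_{j,\pm}$ on the left and $Q_{j,\pm}^{*}$ on the right via symbolic calculus and using the Fermi coordinate parametrization $\gamma(s)=(s,0)$, I arrive at a representation
\[
    K_{j,\pm}(s,s') \;=\; \int e^{i\Phi(s,s',\xi)}\,A_{j,\pm}(s,s',\xi,\lambda)\,d\xi \;+\; O(\lambda^{-N}),
\]
where, after expanding the Riemannian distance on $\gamma$, the leading phase is $\Phi(s,s',\xi) = (s-s')\,\xi_1$ and the amplitude $A_{j,\pm}$ is supported where $|\xi|_g\approx\lambda$ and $\pm\,\xi_2/|\xi|_g \in [2^{-j-1},2^{-j+1}]$.

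The key step is stationary phase on this $\xi$-integral. Writing $\xi = r(\cos\theta,\sin\theta)$, the $r$-integration is confined by $\chi^2(\lambda-r)$ to a unit interval and contributes the factor $\lambda$; the angular support is an arc of length $\sim 2^{-j}$, and the rescaling $\theta = \pm 2^{-j} u$ with $u\in[\tfrac12,2]$ introduces a prefactor $2^{-j}$. Expanding $\cos\theta = 1 - \tfrac12\theta^2 + O(\theta^4)$ gives the effective phase $\lambda(s-s')\cos\theta = \lambda(s-s') - \tfrac12\lambda(s-s')\,2^{-2j}u^2 + O\bigl(\lambda|s-s'|\,2^{-4j}\bigr)$, while $|s-s'|\lesssim\epsilon_0$ on the support (from $\widehat{\chi^2}$). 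When $\lambda|s-s'|\,2^{-2j}\lesssim 1$ I bound the $u$-integral trivially; when $\lambda|s-s'|\,2^{-2j}\gg 1$, the lower bound $|\partial_{u}\Phi|\gtrsim \lambda|s-s'|\,2^{-2j}$ on $u\in[\tfrac12,2]$ justifies iterated integration by parts, producing decay $\bigl[\lambda|s-s'|\,2^{-2j}\bigr]^{-N}$. Combining,
\[
    |K_{j,\pm}(s,s')| \;\lesssim\; \lambda\cdot 2^{-j}\,\bigl(1 + \lambda|s-s'|\,2^{-2j}\bigr)^{-N}.
\]
Because $j\le J-1$ forces $\lambda\,2^{-2j}\ge \lambda^{1/3}\ge 1$, integrating in $s'$ yields $\int |K_{j,\pm}(s,s')|\,ds' \;\lesssim\; \lambda\cdot 2^{-j}\cdot (2^{2j}/\lambda) \;=\; 2^{j}$, and Schur's lemma gives the desired $\|T_{j,\pm}T_{j,\pm}^{*}\|\lesssim 2^{j}$.

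The main obstacle is executing the Hadamard parametrix and the compound-symbol composition cleanly on a genuine Riemannian manifold (rather than a Euclidean model), so that $\Phi$ and $A_{j,\pm}$ in Fermi coordinates truly take the claimed form up to acceptable error terms; in particular, the curvature corrections in the parametrix must be absorbed harmlessly into the amplitude, and one must verify that the symbolic calculus for $Q_{j,\pm}$ (whose derivatives satisfy the nonstandard bounds in \eqref{Symbol Q properties} with $2^{j|\alpha_2|}$ rather than $\lambda^{|\alpha_2|}$) is compatible with the composition at the required order. The split into $Q_{j,+}$ and $Q_{j,-}$ is used precisely so that only one of the two dyadic angular wedges contributes to each operator, preventing the $+\theta$ and $-\theta$ branches from interfering and keeping the single-variable rescaling in the $\theta$-integral unambiguous.
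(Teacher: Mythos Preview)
Your overall strategy ($TT^*$ and Schur on the kernel) matches the paper's, but there is a genuine gap in the phase analysis that your ``main obstacle'' paragraph does not quite anticipate.

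The claim that ``the leading phase is $\Phi(s,s',\xi)=(s-s')\xi_1$'' in Fermi coordinates is not correct to the precision you need. In Fermi coordinates $\gamma(s)=(s,0)$ and the metric is Euclidean \emph{on} $\gamma$, but $\gamma$ is not a geodesic, so $\exp_{\gamma(s')}^{-1}(\gamma(s))$ has a nonzero second component of size $\tfrac{\kappa}{2}(s-s')^2$ (where $\kappa$ is the geodesic curvature). Hence the Hadamard phase is
\[
\Phi(s,s',\xi)\;=\;(s-s')\,\xi_1+\tfrac{\kappa}{2}(s-s')^2\,\xi_2+O\bigl((s-s')^3|\xi|\bigr).
\]
With $\theta=2^{-j}u$ this gives $\partial_u\Phi\approx \lambda(s-s')\,2^{-2j}\bigl[\tfrac{\kappa}{2}(s-s')\,2^{j}-u\bigr]$, which \emph{vanishes} at $u_*=\tfrac{\kappa}{2}(s-s')\,2^{j}$. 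For $|s-s'|\approx 2^{-j}/\kappa$ this critical point lies in the support $u\in[\tfrac12,2]$, so your asserted lower bound $|\partial_u\Phi|\gtrsim \lambda|s-s'|2^{-2j}$ fails there, and the IBP does not give the claimed decay. (Equivalently: your phase has no $\theta$–critical point only because you dropped the curvature term.)

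The paper avoids this by \emph{not} integrating out $t$. It applies Egorov's theorem to write $Q_{j,+}e^{-itP}=e^{-itP}B_{t,j,+}$ with symbol $\kappa_t^*q_{j,+}$, then does stationary phase in the composition variables (Lemma~\ref{Lemma Semiclassical version of SP q+}) to reduce to the phase $\lambda[t+\varphi(t,x,\xi)-y\cdot\xi]$ with amplitude supported where \emph{both} $\xi_2(0)\approx 2^{-j}$ and $\xi_2(t)\approx 2^{-j}$. The nonvanishing geodesic curvature enters through Lemma~\ref{Lemma xi2 dot positive} (forcing $\dot\xi_2\neq0$, hence monotonicity of $\xi_2$ along the flow) and Lemma~\ref{Lemma xi2 deriv of varphi} (giving $|\partial_{\xi_2}\varphi|\gtrsim 2^{-j}|t|$). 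IBP is then carried out in $(\xi_1,\xi_2)$ with an anisotropic operator $L_\xi$, yielding a bound with weight $(1+\lambda|\partial_{\xi_1}\varphi-y_1|+\lambda 2^{-2j}|t|)^{-N}$, and the Schur bound comes from integrating first in $y_1$, then $t$, then $\xi$.

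Your approach \emph{can} be salvaged: if one tracks both microlocalizations correctly (the one from $Q_{j,+}$ at $x$ and the one from $Q_{j,+}^*$ at $y$, related by the Hamiltonian flow), the combined amplitude actually vanishes at the critical point $u_*$—this is essentially what the $\pm$ split accomplishes, and is the geometric content of Lemmas~\ref{Lemma xi2 dot positive}--\ref{Lemma xi2 deriv of varphi}. But establishing that requires exactly the Egorov/flow analysis you have bypassed; asserting the amplitude is supported merely on $\pm\xi_2/|\xi|_g\in[2^{-j-1},2^{-j+1}]$ (a single constraint) is not enough.
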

We will also prove this proposition in the next section.

\subsection{Notation for symbols of pseudodifferential operators}\label{SS: Notation for PDOs}

The pseudodifferential operator $Q_j$ above is defined by using the compound symbols $q_j (x, y, \xi)$, but sometimes we will identify the compound symbol $q_j (x, y, \xi)$ with the usual symbol $q_j (x, \xi)$, modulo smoothing errors, especially when we apply Egorov's theorem and the theorem is stated with usual symbols of pseudodifferential operators. Indeed, recall from the pseudodifferential operator theory (cf. \cite[p.97]{Sogge1993fourier} or \cite[pp.92-pp.93]{Sogge2014hangzhou}) that there exists a symbol $\Tilde{q}_j (x, \xi)$ such that
\begin{align*}
    \int e^{i(x-y)\cdot \xi} q_j (x, y, \xi)\:d\xi-\int e^{i(x-y)\cdot \xi} \Tilde{q}_j (x, \xi)\:d\xi
\end{align*}
is smoothing of any order.

The same principle is applied to any other symbols of pseudodifferential operators unless otherwise specified.

\section{Proof of Theorem \ref{Theorem Universal Estimates}}\label{S:Proof of universal estimates}
Note that
\begin{align*}
    \chi^2 (\lambda-P) f(x)=\frac{1}{2\pi} \int e^{i(\lambda -P)t} \widehat{\chi^2}(t)f(x)\:dt =\frac{1}{2\pi} \int e^{it\lambda} \widehat{\chi^2}(t) e^{-itP} f(x)\:dt.
\end{align*}
We first recall from \cite{Sogge1993fourier} or \cite{Zworski2012Semiclassical} that, by the Lax parametrix, there exist $\varphi$ and $a$ such that, up to smoothing errors,
\begin{align*}
    e^{-itP} (x, w)=\int e^{i\varphi (t, x, \xi)-iw\cdot \xi} a(t, x, \xi)\:d\xi.
\end{align*}
Here, the phase $\varphi=\varphi(t, x, \xi)$ satisfies, for small enough $t$,
\begin{align}\label{varphi construction}
    \begin{split}
        & \kappa_t (d_\xi \varphi(t, x, \xi), \xi)=(x, d_x \varphi(t, x, \xi)), \quad (\text{or, } \kappa_t (y, \xi(0))=(x, \xi(t))) \\
        & \partial_t \varphi+p(x, d_x \varphi)=0,\quad \varphi(0, x, \xi)=\langle x, \xi \rangle,
    \end{split}
\end{align}
where $\kappa_t: \mathbb{R}^{4}\to \mathbb{R}^4$ is the Hamiltonian flow of $p(x, \xi)=|\xi|_{g(x)}$, and homogeneous in $\xi$. Also, the amplitude $a$ satisfies
\begin{align*}
    |\partial_t^j \partial_x^\alpha \partial_\xi^\beta a(t, x, \xi)|\leq C_{j, \alpha, \beta} (1+|\xi|)^{-|\beta|},
\end{align*}
and so,
\begin{align}\label{Size estimates for a}
    |\partial_t^j \partial_x^\alpha \partial_\xi^\beta a(t, x, \lambda \xi)|\leq C_{j, \alpha, \beta} \lambda^{|\beta|} (1+\lambda |\xi|)^{-|\beta|}.
\end{align}
Note that the right hand side is dominated by $C_{j, \alpha, \beta}\lambda^{|\beta|}(1+\lambda |\xi|)^{-|\beta|}\lesssim C_{j, \alpha, \beta}$ if $|\xi|\approx 1$.

In this section, we prove Proposition \ref{Prop: q plus minus estimates}, Proposition \ref{Prop: QJ estimates}, and \eqref{I-Qj estimates} in order.

\subsection{Proof of Proposition \ref{Prop: q plus minus estimates}}\label{SS: Proof of Proposition q+-}

By the $TT^*$ argument, Proposition \ref{Prop: q plus minus estimates} follows from
\begin{align}\label{Qj+ estimates}
    \|Q_{j, \pm} \circ \chi^2 (\lambda-P) \circ Q_{j, +}^* f \|_{L^2(\gamma)} \lesssim 2^j \|f\|_{L^2(\gamma)},\quad j\leq J-1.
\end{align}
We focus on the operator $Q_{j, +} \circ \chi^2 (\lambda-P) \circ Q_{j, +}^*$. The argument for $Q_{j, -} \circ \chi^2 (\lambda-P) \circ Q_{j, -}^*$ is similar. We write
\begin{align*}
    Q_{j, +} \circ \chi^2 (\lambda-P) \circ Q_{j, +}^* f(x)&=\left[ Q_{j, +} \circ \left(\frac{1}{2\pi} \int e^{it\lambda} \widehat{\chi^2}(t) e^{-itP}\:dt \right) \circ Q_{j, +}^* \right]f(x) \\
    &=\frac{1}{2\pi} \int K_{j, +}(x, y)f(y)\:dy,
\end{align*}
where
\begin{align}\label{Kj+ kernel}
    K_{j, +} (x, y)=\int e^{it\lambda} \widehat{\chi^2}(t) \left( Q_{j, +} \circ e^{-itP} \circ Q_{j, +}^* \right)(x, y)\:dt.
\end{align}
By Egorov's theorem (cf. \cite[Theorem 11.1]{Zworski2012Semiclassical}, and/or \cite[Chapter 4]{Sogge2014hangzhou}), we have
\begin{align*}
    Q_{j, +} \circ e^{-itP}=e^{-itP}\circ B_{t, j, +},
\end{align*}
where $B_{t, j, +}$ has a symbol
\begin{align*}
    b_{t, j, +}=\kappa_t^* q_{j, +} +b'.
\end{align*}
Here, $\kappa_t^* q_{j, +}$ is homogeneous of degree $1$ in $\xi$, and $|\partial^\alpha b'|\leq C_\alpha' \lambda^{-1} 2^{2j} 2^{j|\alpha|}\leq C_\alpha \lambda^{-\frac{1}{3}} 2^{j|\alpha|}$ for all $\alpha$. We will ignore the remainder $b'$. Indeed, let $B'$ be the operator whose symbol is $b'=O_S (\lambda^{-1}2^{2j})$ such that
\begin{align*}
    B'(x, y)=\frac{1}{(2\pi)^2}\int e^{i(x-y)\cdot \xi} b'(x, y, \xi)\:d\xi.
\end{align*}
The size estimates $|\partial^\alpha b'|\leq C_\alpha \lambda^{-\frac{1}{3}} 2^{j|\alpha|}$ are better than the size estimates of $\kappa_t^* q_{j, +}$ and $q_{j, +}$. Also, the symbol $b'$ is compactly supported with $\mathrm{supp}(b')\subset \mathrm{supp}(\kappa_{-t}(\mathrm{supp}(q_j)))$. Thus, the arguments below will work when we replace $\kappa_t^* q_{j, +}$ by $b'$ but with better estimates. Hence, for simplicity, we write $b_{t, j, +}=\kappa_t^* q_{j, +}$.

Without loss of generality, we can assume that $a(t, x, \lambda\xi)$ is compactly supported in $\xi$, independent of $\lambda$. Indeed, first let $h_t (z, y, \eta)$ be the symbol of $B_{t, j, +}\circ Q_j^*$. By construction, $h_t (z, y, \eta)$ is supported near $|\eta|\approx \lambda^{-1}$. Let $\beta\in C_0^\infty$ be a bump function with $\beta \equiv 1$ in a neighborhood of $\mathrm{supp}(h_t (z, y, \lambda(\cdot)))$. We then have that
\begin{align*}
    (e^{-itP}\circ B_{t, j, +}\circ Q_j^*)(x, y)=\frac{\lambda^4}{(2\pi)^2}\iiint e^{i\lambda [\varphi(t, x, \xi)-z\cdot \xi+(z-y)\cdot \eta]} a(t, x, \lambda \xi) h_t(z, y, \lambda \eta)\:dz\:d\xi\:d\eta.
\end{align*}
Integrating by parts in $z$, we have, for $N=1, 2, 3, \cdots$,
\begin{align*}
    & \left|\iiint e^{i\lambda [\varphi(t, x, \xi)-z\cdot \xi+(z-y)\cdot \eta]} (1-\beta(\xi)) a(t, x, \lambda \xi) h_t(z, y, \lambda \eta)\:dz\:d\xi\:d\eta \right| \\
    &\lesssim \iiint_{|\xi|\not\approx 1,\; |\eta|\approx 1} (1+\lambda|\xi-\eta|)^{-N} |(1-\beta(\xi)) a(t, x, \lambda \xi) h_t (z, y, \lambda\eta)|\:dz\:d\xi\:d\eta \\
    &\lesssim \iint_{|\xi-\eta|\gtrsim 1,\; |\eta|\approx 1} (1+\lambda |\xi-\eta|)^{-N}\:d\xi\:d\eta \lesssim \int_{|\xi|\gtrsim 1} (\lambda|\xi|)^{-N}\:d\xi \lesssim \lambda^{-N}.
\end{align*}
Since we can ignore this contribution, we can assume that $a(t, x, \lambda\xi)$ is compactly supported in $\xi$.

We write
\begin{align*}
    (e^{-itP}\circ B_{t, j, +}\circ Q_{j, +}^*)(x, y)=\frac{\lambda^6}{(2\pi)^4}\int e^{i\lambda(\varphi(t, x, \xi)-y\cdot \xi)} V_{j, +}(t, x, y, \xi)\:d\xi,
\end{align*}
where
\begin{align}\label{Notation for semiclassical SP}
    \begin{split}
        & V_{j, +}(t, x, y, \xi)=\iiiint e^{i\lambda \Phi(w, \eta, z, \zeta)} v_{j, +}(t, w, \eta, z, \zeta)\:dw\:d\eta\:dz\:d\zeta, \\
        & \Phi(w, \eta, z, \zeta)=(y-w)\cdot \xi+(w-z)\cdot \eta+(z-y)\cdot \zeta, \\
        & v_{j, +}(t, w, \eta, z, \zeta)=v_{j, +}(x, y; t, w, \eta, z, \zeta)=a(t, x, \lambda\xi) b_{t, j, +}(w, z, \lambda\eta) q_{j, +}(y, z, \lambda\zeta).
    \end{split}
\end{align}

We first consider the kernel $e^{-itP} \circ B_{t, j, +}\circ Q_{j, +}^*$.

\begin{lemma}\label{Lemma Semiclassical version of SP q+}
Let $\Phi, v$ be as in \eqref{Notation for semiclassical SP}. We have
\begin{align}\label{Result of SP for q+}
    \begin{split}
        (e^{-itP}\circ B_{t, j, +}\circ Q_{j, +}^*)(x, y)&=\lambda^2 \int e^{i\lambda(\varphi(t, x, \xi)-y\cdot \xi)} \Tilde{a}_j (t, x, y, \xi)\:d\xi \\
        &\hspace{50pt}+\frac{\lambda^6}{(2\pi)^4}R_N(t, y) \int e^{i\lambda (\varphi(t, x, \xi)-y\cdot \xi)} a(t, x, \lambda \xi)\:d\xi,
    \end{split}
\end{align}
where
\begin{align}\label{b lambda set up}
    \begin{split}
        \Tilde{a}_j (t, x, y, \xi)=\sum_{l=0}^{N-1} \lambda^{-l} L_l \Big(v_{j, +}(x, y; t, y, \xi, y, \xi)\Big), \quad \text{and} \quad |\partial_t^\alpha R_N|\leq C_{N, \alpha} \lambda^{-\frac{N}{3}},
    \end{split}
\end{align}
and the $L_l$ are the differential operators of order at most $2l$ with respect to the variables $w, \eta, z$, and $\zeta$, acting on $v_{j, +}$ at the critical point of $\Phi (w, \eta, z, \zeta)$.
\end{lemma}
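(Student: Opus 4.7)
The plan is to apply the method of stationary phase to the eightfold oscillatory integral defining $V_{j, +}(t, x, y, \xi)$, treating $\lambda$ as the large parameter and $(w, \eta, z, \zeta) \in \mathbb{R}^8$ as the integration variables. The first step is to locate the critical point of $\Phi$: setting the gradient in $(w, \eta, z, \zeta)$ to zero gives $\eta = \xi$, $w = z$, $\zeta = \eta$, and $z = y$, so there is a unique critical point
\begin{equation*}
    w = z = y, \qquad \eta = \zeta = \xi,
\end{equation*}
which lies in the support of $v_{j, +}$ for the relevant $(y, \xi)$. The Hessian of $\Phi$ at this point is a block matrix whose only nonzero $2 \times 2$ blocks are $\pm I$; a direct computation shows it is nondegenerate with $|\det \partial^2 \Phi| = 1$ and constant signature, independent of $(y, \xi)$, so the hypotheses of stationary phase are uniformly satisfied.

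Standard stationary phase then yields an asymptotic expansion
\begin{equation*}
    V_{j, +}(t, x, y, \xi) = \left( \frac{2\pi}{\lambda} \right)^4 \sum_{l=0}^{N-1} \lambda^{-l} L_l \big( v_{j, +}(x, y; t, w, \eta, z, \zeta) \big) \big|_{\mathrm{crit}} + E_N(t, x, y, \xi),
\end{equation*}
where each $L_l$ is a fixed differential operator of order at most $2l$ in $(w, \eta, z, \zeta)$, evaluated at the critical point. Inserting this into the formula for the kernel, multiplying by $\lambda^6/(2\pi)^4$, and integrating against $e^{i\lambda(\varphi(t, x, \xi) - y \cdot \xi)} \, d\xi$ produces the $\lambda^2 \int e^{i\lambda(\varphi - y \cdot \xi)} \Tilde{a}_j \, d\xi$ leading term of the lemma, with $\Tilde{a}_j$ exactly as in \eqref{b lambda set up}.

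The delicate point is the remainder bound. The classical stationary phase remainder is controlled by seminorms of $v_{j, +}$ of order roughly $2N$ in $(w, \eta, z, \zeta)$. Since $b_{t, j, +} = \kappa_t^* q_{j, +}$ inherits from $q_{j, +}$ the anisotropic estimates \eqref{Symbol Q properties} through Egorov's theorem---losing a factor of $2^j$ per $\xi_2$-derivative and of $\lambda^{-1}$ per $\xi_1$-derivative---each application of a second-order operator to $v_{j, +}$ costs at most $2^{2j}$ (from $\eta_2, \zeta_2$ derivatives) while being accompanied by a prefactor $\lambda^{-1}$. The constraint $j \leq J - 1$, equivalently $2^{2j} \leq \lambda^{2/3}$, then gives a net gain of $\lambda^{-1} \cdot 2^{2j} \leq \lambda^{-1/3}$ per step, so that the remainder $E_N$ and all its $t$-derivatives obey seminorms of size $\lambda^{-N/3}$, uniformly for $t \in \mathrm{supp}(\widehat{\chi^2})$. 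Packaging $E_N$ into the form $R_N(t, y) \cdot (\lambda^6/(2\pi)^4) \int e^{i\lambda(\varphi - y \cdot \xi)} a(t, x, \lambda \xi) \, d\xi$ is a bookkeeping rearrangement that isolates the $\lambda^{-N/3}$ decay into the scalar factor $R_N$, giving the second line of \eqref{Result of SP for q+}.

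The main obstacle I anticipate is this last step: the stationary phase remainder must be verified in the nonstandard, $\lambda$-dependent symbol class defined by \eqref{Symbol Q properties} and transported under the Hamiltonian flow $\kappa_t$, rather than in the textbook homogeneous setting. The cleanest route is to pass to a semiclassical stationary phase formulation as in \cite{Zworski2012Semiclassical}, where the effective large parameter driving the expansion is $\lambda / 2^{2j} \geq \lambda^{1/3}$; alternatively, one can track derivatives in $(w, \eta, z, \zeta)$ by hand, using \eqref{Symbol Q properties} and the size estimates \eqref{Size estimates for a} for $a$, and explicitly collect the $2^j$-losses to verify the claimed $\lambda^{-N/3}$ bound.
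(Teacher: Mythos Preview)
Your proposal is correct and follows essentially the same route as the paper: locate the unique critical point $(w,\eta,z,\zeta)=(y,\xi,y,\xi)$ of $\Phi$, check that the Hessian is constant with $|\det\partial^2\Phi|=1$, apply stationary phase in the eight variables, and bound the remainder via $\lambda^{-N}\sup_{|\alpha|\le 2N}|\partial^\alpha v_{j,+}|\le C_N\lambda^{-N}2^{2jN}\lesssim\lambda^{-N/3}$ using $2^j\lesssim\lambda^{1/3}$. The paper additionally computes the signature of $\partial^2\Phi$ explicitly (it is zero), but as you observe this only contributes a unimodular constant absorbed into the $L_l$. Your closing concern about nonstandard symbol classes is unnecessary here: since $a(t,x,\lambda\xi)$ is independent of $(w,\eta,z,\zeta)$ it factors out entirely, and the classical H\"ormander remainder bound (Theorem 7.7.5 in \cite{Hormander2003LinearVol1}) applied to $b_{t,j,+}(w,z,\lambda\eta)q_{j,+}(y,z,\lambda\zeta)$ with the derivative estimates $|\partial^\alpha(\cdot)|\le C_\alpha 2^{j|\alpha|}$ gives the result directly.
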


\begin{proof}
Since
\begin{align*}
    \nabla_{w, \eta, z, \zeta} \Phi=(\Phi_w', \Phi_\eta', \Phi_z', \Phi_\zeta')=(-\xi+\eta, w-z, \zeta-\eta, z-y),
\end{align*}
the critical point is $(w, \eta, z, \zeta)=(y, \xi, y, \xi)$. We consider the stationary phase argument. The Hessian of $\Phi$, denoted by $\partial^2 \Phi$, is
\begin{align*}
    \partial^2 \Phi=
    \begin{pmatrix}
    \Phi_{ww}'' & \Phi_{w\eta}'' & \Phi_{wz}'' & \Phi_{w\zeta}'' \\
    \Phi_{\eta w}'' & \Phi_{\eta\eta}'' & \Phi_{\eta z}'' & \Phi_{\eta\zeta}'' \\
    \Phi_{zw}'' & \Phi_{z\eta}'' &\Phi_{zz}'' & \Phi_{z\zeta}'' \\
    \Phi_{\zeta w}'' & \Phi_{\zeta \eta}'' & \Phi_{\zeta z}'' & \Phi_{\zeta \zeta}''
    \end{pmatrix}=
    \begin{pmatrix}
    O & I & O & O \\
    I & O & -I & O \\
    O & -I & O & I \\
    O & O & I & O
    \end{pmatrix}.
\end{align*}
By standard properties of the determinant, we have $|\det(\partial^2\Phi)|=1$. We begin by computing the signum of $\Phi$. Let $e$ be an eigenvalue of $\partial^2\Phi$, that is, $\det(\partial^2\Phi-eI)=0$. If $e=0$, then $|\det(\partial^2\Phi-eI)|=1\not=0$, which is a contradiction, and so, $e\not=0$. With this in mind, using the properties of block matrices (cf. \cite{Powell2011DetOfBlockMatrices}, \cite{Silvester2000Determinants}, etc.), we have
\begin{align*}
    \det(\partial^2\Phi-eI)&=e^4\left(\left(e-\frac{1}{e} \right)^2-1 \right)^2=\left((e^2-1)^2-e^2 \right)^2=(e^2-e-1)^2 (e^2+e-1)^2 \\
    &=\left(e-\frac{1+\sqrt{5}}{2} \right)^2 \left(e-\frac{1-\sqrt{5}}{2} \right)^2 \left( e-\frac{-1+\sqrt{5}}{2} \right)^2 \left(e-\frac{-1-\sqrt{5}}{2} \right)^2.
\end{align*}
This gives us $\mathrm{sgn}(\partial^2 \Phi)=0$.

By construction and homogeneity, we have the size estimates for radial and generic derivatives
\begin{align*}
    \left|\partial_{w, z}^\alpha (\eta\cdot \nabla_\eta)^{l_1} (\zeta\cdot \nabla_\zeta)^{l_2} \partial_{\eta, \zeta}^\beta \Big(b_{t, j, +}(w, z, \lambda\eta) q_j(y, z, \lambda\zeta)\Big)\right|\leq C_{\alpha, k, l_1, l_2, \beta} 2^{j|\beta|},
\end{align*}
which in turn implies
\begin{align*}
    |\partial_{w, \eta, z, \zeta}^\alpha v_{j, +}(t, w, \eta, z, \zeta)|\leq C_\alpha 2^{j|\alpha|}.
\end{align*}
Here, we used the homogeneity of $b_{t, j, +}=\kappa_t^* q_j$, and the fact that the size estimates of $b_{t, j, +}=\kappa_t^* q_j$ are comparable to those of $q_j$ by \cite[Lemma 11.11]{Zworski2012Semiclassical} with small $t$.

By the method of stationary phase (cf. Theorem 7.7.5 and (3.4.6) in \cite{Hormander2003LinearVol1}), we have that
\begin{align*}
    V_{j, +}(t, x, y, \xi)&=\iiiint e^{i\lambda\Phi(w, \eta, z, \zeta)} v_{j, +}(t, w, \eta, z, \zeta) \:dw\:d\eta\:dz\:d\zeta \\
    &=e^{i\lambda \Phi(y, \xi, y, \xi)} \left(\frac{\lambda}{2\pi} \right)^{-\frac{8}{2}} |\det \partial^2 \Phi|^{-\frac{1}{2}} e^{\frac{\pi i}{4} \mathrm{sgn}(\partial^2 \Phi)} \sum_{l<N} \lambda^{-l} L_l v_{j, +}(t, y, \xi, y, \xi)+ R_N(t, y) a(t, x, \lambda \xi),
\end{align*}
for $N=1, 2, 3, \cdots$, where, at the critical point $(y, \xi, y, \xi)$,
\begin{align*}
    |R_N|\leq C_N \lambda^{-N} \sup_{|\alpha|\leq 2N} |\partial^\alpha v_{j, +}| \leq \Tilde{C}_N \lambda^{-N}(2^j)^{2N} \lesssim \Tilde{C}_N \lambda^{-\frac{N}{3}}.
\end{align*}
Here, the $L_l$ are differential operators of order at most $2l$ acting on $v_{j, +}$ at the critical point $(y, \xi, y, \xi)$, and $2^{-j} \gtrsim \lambda^{-\frac{1}{3}}$. It follows that
\begin{align*}
    V_{j, +}(t, x, y, \xi)=(2\pi)^4 \lambda^{-4} \left(\sum_{k=0}^{N-1} \lambda^{-k} L_k v_{j, +} (t, y, \xi, y, \xi) \right)+R_N (t, y) a(t, x, \lambda \xi),
\end{align*}
which in turn implies that
\begin{align*}
    (e^{-itP}\circ B_{t, j, +}\circ Q_{j, +}^*)(x, y)&=\lambda^2 \int e^{i\lambda(\varphi(t, x, \xi)-y\cdot \xi)} \Tilde{a}_j (t, x, y, \xi)\:d\xi \\
    &\hspace{50pt}+\frac{\lambda^6}{(2\pi)^4}R_N (t, y) \int e^{i\lambda (\varphi(t, x, \xi)-y\cdot \xi)} a(t, x, \lambda \xi)\:d\xi,
\end{align*}
where
\begin{align*}
    \Tilde{a}_j (t, x, y, \xi)=\sum_{l=0}^{N-1} \lambda^{-l} L_l v_{j, +}(x, y; t, y, \xi, y, \xi).
\end{align*}

This completes the proof of Lemma \ref{Lemma Semiclassical version of SP q+}.
\end{proof}

\begin{remark}
Note that the proof of this lemma also works for $j=J$, since we only used the fact that $2^{-j}\gtrsim \lambda^{-\frac{1}{3}}$ for $j$, which is also satisfied for $j=J$. We will use this later to prove Proposition \ref{Prop: QJ estimates}.
\end{remark}

We first want to show that we can ignore the contribution of the second term in the right hand side of \eqref{Result of SP for q+}. If we replace $(Q_j \circ e^{-itP}\circ Q_j^*)$ by the second term modulo smoothing errors, by \eqref{Kj+ kernel}, the contribution of the second term in $K_{j, +}$ is
\begin{align*}
    \frac{\lambda^6}{(2\pi)^4} \iint e^{i\lambda[t+\varphi(t, x, \xi)-y\cdot \xi]} \widehat{\chi^2}(t) R_N (t, y) a(t, x, \lambda \xi)\:d\xi\:dt.
\end{align*}
We can ignore this contribution.
\begin{lemma}\label{Lemma 2nd contribution small}
We have
\begin{align*}
    \left|\frac{\lambda^6}{(2\pi)^4} \iint e^{i\lambda[t+\varphi(t, x, \xi)-y\cdot \xi]} \widehat{\chi^2}(t) R_N (t, y) a(t, x, \lambda \xi)\:d\xi\:dt \right|\leq C_N \lambda^{6-\frac{N}{3}}=O(1),\quad \text{when } N\geq 18.
\end{align*}
\end{lemma}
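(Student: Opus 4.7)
The approach is pure absolute-value estimation on the integrand; no stationary phase or integration by parts in $t$ is needed, because the desired bound $\lambda^{6-N/3}$ generously allows the entire $\lambda^6$ prefactor to be wasted and compensates for it only through the largeness of $N$ in $R_N$.

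The plan is to collect three facts that are all in place by this stage of the argument and then multiply them. First, from \eqref{b lambda set up}, $|R_N(t,y)| \leq C_N \lambda^{-N/3}$ uniformly in $(t,y)$, so $R_N$ is the sole source of negative powers of $\lambda$. Second, by \eqref{Size estimates for a} with $j=\alpha=\beta=0$, the amplitude satisfies $|a(t,x,\lambda\xi)| \lesssim 1$; moreover, from the reduction carried out just before the statement of Lemma \ref{Lemma Semiclassical version of SP q+}, we may assume $a(t,x,\lambda\xi)$ is compactly supported in $\xi$ with support independent of $\lambda$, so the $\xi$-integral is over a set of bounded measure. Third, by \eqref{epsilon0 and chi}, $\widehat{\chi^2}$ is supported in $[-2\epsilon_0,2\epsilon_0]$, so the $t$-integral is over a set of measure at most $4\epsilon_0$ and $\widehat{\chi^2}\in L^1(\mathbb{R})$.

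Placing absolute values inside the integral (thereby discarding the oscillation entirely) and combining these three bounds yields
\[
\left|\frac{\lambda^6}{(2\pi)^4}\iint e^{i\lambda[t+\varphi(t,x,\xi)-y\cdot\xi]}\widehat{\chi^2}(t)R_N(t,y)a(t,x,\lambda\xi)\,d\xi\,dt\right| \lesssim \lambda^{6}\cdot \lambda^{-\frac{N}{3}}\cdot \mathrm{vol}(\mathrm{supp}_\xi a)\cdot \|\widehat{\chi^2}\|_{L^1(\mathbb{R})},
\]
which is bounded by $C_N \lambda^{6-\frac{N}{3}}$. The right-hand side is $O(1)$ once $N \geq 18$, matching the claim. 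There is no substantive obstacle here; the only item to verify is that the three support and size facts quoted above really are available at this stage, and each was established earlier in the section. This lemma should be viewed as a size-bookkeeping step whose sole purpose is to justify discarding the second summand of \eqref{Result of SP for q+} when it is inserted back into the kernel $K_{j,+}$ in \eqref{Kj+ kernel}, so that only the leading term with $\widetilde{a}_j$ needs further analysis.
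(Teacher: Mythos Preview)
Your proposal is correct and follows essentially the same approach as the paper's own proof: both arguments simply bound the integrand in absolute value, invoke the compact support of $a(t,x,\lambda\xi)$ in $\xi$ and of $\widehat{\chi^2}$ in $t$, and use the uniform bound $|R_N|\lesssim \lambda^{-N/3}$ from \eqref{b lambda set up} to obtain $\lambda^{6-N/3}$. Your write-up is slightly more detailed in citing each ingredient explicitly, but there is no substantive difference in method.
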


\begin{proof}
Recall that we may assume that $a(t, x, \xi)$ is compactly supported in $\xi$. The function $\widehat{\chi^2}(t)$ is also compactly supported in $t$. It then follows that
\begin{align*}
    \left|\frac{\lambda^6}{(2\pi)^4} \iint e^{i\lambda[t+\varphi(t, x, \xi)-y\cdot \xi]} \widehat{\chi^2}(t) R_N (t, y) a(t, x, \lambda \xi)\:d\xi\:dt \right|\lesssim \lambda^6 \sup_{t, y} |R_N| \lesssim \lambda^{6-\frac{N}{3}}=O(1),
\end{align*}
when $N\geq 18$.
\end{proof}
By Lemma \ref{Lemma 2nd contribution small}, the contribution of the second term of the right hand side of \eqref{Result of SP for q+} is $O(1)$ by the generalized Young's inequality, which is better than what we need to show.

We thus focus on the first term in the right hand side of \eqref{Result of SP for q+}, that is, modulo $O(1)$ errors,
\begin{align*}
    K_{j, +} (x, y)=\lambda^2 \iint e^{i\lambda(t+\varphi(t, x, \xi)-y\cdot \xi)} \widehat{\chi^2 }(t) \Tilde{a}_j (t, x, y, \xi)\:d\xi\:dt.
\end{align*}
Recall that
\begin{align*}
    \Tilde{a}_j (t, x, y, \xi)=\sum_{l=0}^{N-1} \lambda^{-l} L_l \Big(v_{j, +}(x, y; t, y, \xi, y, \xi)\Big),
\end{align*}
and, by the discussion in \S \ref{SS: Notation for PDOs}, we write
\begin{align*}
    v_{j, +}(t, y, \xi, y, \xi)=a(t, x, \lambda\xi) b_{t, j, +}(y, y, \lambda \xi) q_{j, +} (y, y, \lambda \xi) =a(t, x, \lambda\xi) q_{j, +} (\kappa_t (y, \lambda \xi)) q_{j, +} (y, \lambda \xi).
\end{align*}
We will consider the contribution of the first term of \eqref{Result of SP for q+} in Fermi coordinates. To do so, we consider the behavior of $\xi(t)$ in the Hamiltonian flow of $|\xi|_{g(x)}$. By \eqref{varphi construction}, for Hamiltonian $p(x, \xi)=|\xi|_{g(x)}$, we have Hamilton's equation
\begin{align*}
    \dot{x}=d_\xi p,\quad \dot{\xi}=-d_x p,
\end{align*}
that is,
\begin{align}\label{Hamilton's eqn's}
    \begin{split}
        & \dot{x}_1 (t)=\frac{g^{11}(x) \xi_1}{|\xi|_{g(x)}},\quad \dot{\xi}_1 (t)=-\frac{\partial_{x_1} g^{11}(x)\xi_1^2}{|\xi|_{g(x)}}, \\
        & \dot{x}_2 (t)=\frac{\xi_2}{|\xi|_{g(x)}},\quad \dot{\xi}_2 (t)=-\frac{\partial_{x_2}g^{11}(x)\xi_1^2}{|\xi|_{g(x)}}.
    \end{split}
\end{align}
Here, we used the fact that $|\xi|_g^2=g^{11}(x)\xi_1^2+\xi_2^2$. Recall that we focus only on small $t$ by \eqref{epsilon0 and chi}. We show that $|\partial_{\xi_2} \varphi(t, x, \xi)|=|t|$, if $\xi_1 (s)=0$ for some small $s$.

\begin{lemma}\label{Lemma xi2 with vanishing xi1}
If $\xi_1 (s)=0$ for some small $s$, then we have that $|\partial_{\xi_2} \varphi(s, x, \xi)|=|s|$.
\end{lemma}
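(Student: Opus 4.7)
The plan is to extract $\partial_{\xi_2}\varphi(s, x, \xi)$ as a geometric quantity from the Lax parametrix and then exploit the very rigid structure of the Hamiltonian equations \eqref{Hamilton's eqn's} in Fermi coordinates. Via the eikonal relation \eqref{varphi construction}, if we set $y := d_\xi\varphi(s, x, \xi)$, then $\kappa_s(y, \xi) = (x, d_x\varphi(s, x, \xi))$, so $y$ is precisely the initial position of the bicharacteristic arriving at $x$ at time $s$ with initial momentum $\xi$, and in particular $\partial_{\xi_2}\varphi(s, x, \xi) = y_2$. As this lemma will be applied at points $x\in\gamma$ (so that $x_2 = 0$ in Fermi coordinates), the task reduces to showing $|y_2| = |s|$.

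The crucial observation is that the hypothesis $\xi_1(s) = 0$ forces $\xi_1$ to vanish identically along the bicharacteristic. Indeed, \eqref{Hamilton's eqn's} shows that both $\dot{\xi}_1$ and $\dot{\xi}_2$ are multiples of $\xi_1^2$, so the curve $\tau \mapsto (x_1,\, x_2(s) + (\tau-s)\xi_2(s)/|\xi|_g,\, 0,\, \xi_2(s))$ is a genuine solution of the full Hamiltonian system through the state $(x(s), \xi(s)) = (x, (0, \xi_2(s)))$. By ODE uniqueness, valid in the smooth region $|\xi|_g \approx \lambda > 0$, this must coincide with the actual trajectory, and evaluating at $\tau = 0$ yields $\xi = (0, \xi_2)$, $y_1 = x_1$, and $\xi_2 = \xi_2(s)$ with $|\xi_2| = |\xi|_g$ by conservation of $p$.

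Integrating $\dot{x}_2 = \xi_2/|\xi|_g$ from $\tau = 0$ to $\tau = s$ and imposing $x_2 = 0$ then gives $y_2 = -s\,\xi_2/|\xi|_g$, so that $|y_2| = |s|$, which is the desired conclusion. The main conceptual step is the ODE-uniqueness argument in the preceding paragraph: one has to verify cleanly that $\xi_1 \equiv 0$ genuinely solves the coupled Hamiltonian system (so no nontrivial trajectory can pick up an isolated zero of $\xi_1$), which relies on the smoothness of the vector field away from the zero section. Once this rigidity is in place, the remaining computations are immediate from the identity $\partial_\xi\varphi = y$ and one-dimensional integration.
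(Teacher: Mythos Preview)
Your proof is correct and follows essentially the same route as the paper: both arguments identify $\partial_{\xi_2}\varphi(s,x,\xi)$ with the second coordinate $y_2$ of the initial position via \eqref{varphi construction}, invoke ODE uniqueness to force $\xi_1\equiv 0$ along the bicharacteristic, and then integrate $\dot{x}_2=\xi_2/|\xi|_g=\pm1$ to obtain $|y_2|=|s|$. If anything, your treatment of the uniqueness step is slightly cleaner, since you exhibit the full explicit trajectory $(x_1,\,x_2(s)+(\tau-s)\xi_2/|\xi|_g,\,0,\,\xi_2(s))$ rather than only asserting that $\xi_1\equiv 0$ is consistent with the equations.
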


\begin{proof}
Suppose $\xi_1 (s_0)=0$ for some small $s_0$. Since we focus on small $t$, we have a unique solution of \eqref{Hamilton's eqn's} by the uniqueness of the solutions to the ODEs. Since $\xi_1 \equiv 0$ satisfies both the equations \eqref{Hamilton's eqn's} and $\xi_1 (s_0)=0$, by the uniqueness of the solution, we should have $\xi_1 \equiv 0$. This implies that
\begin{align*}
    |\xi|_g =\sqrt{g^{11}(x)\xi_1^2+\xi_2^2}=|\xi_2|.
\end{align*}
If $(z(s), \zeta(s))$ is the curve with $z(t)=x$, $\zeta(0)=\xi$, $z_2 (t)=0$, and $\zeta(0)=(0, \xi_2)$, then
\begin{align*}
    \dot{z}_2 (s)=\frac{\xi_2 (s)}{|\xi (s)|_{g(z(s))} }=\pm 1.
\end{align*}
By the mean value theorem, we have $0=z_2 (t)=z_2 (0)\pm t$, and thus, $z_2 (0)=\mp t$. We then have that
\begin{align*}
    |\partial_{\xi_2} \varphi(t, x, \xi)|=|z_2 (0)|=|t|,
\end{align*}
as required.
\end{proof}

We next consider the case of $\xi_1 (s)\not=0$ for any small $s$.

\begin{lemma}\label{Lemma xi2 dot positive}
For $|s|\ll 1$, suppose $\xi(s)\in \mathrm{supp}(\Tilde{a}_j (s, x, y, \cdot))$. Let $\gamma$ be as above. If $\xi_1 (s)\not=0$ for any small $s$, then, for $x, y\in \gamma$, in Fermi coordinates, we have either $\dot{\xi}_2 (s)>0$ or $\dot{\xi}_2 (s)<0$.
\end{lemma}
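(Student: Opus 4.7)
The plan is to reduce everything to a direct computation from Hamilton's equation \eqref{Hamilton's eqn's}, using the hypothesis on the geodesic curvature only to rule out the vanishing of a certain partial derivative of $g^{11}$ along $\gamma$.

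First, I would work entirely in Fermi coordinates along $\gamma$, where the paper's listed properties plus the usual Gauss lemma for tubular neighborhoods give the block form $g = g_{11}(x_1, x_2)\, dx_1^2 + dx_2^2$ with $g_{11}(x_1, 0) = 1$ and $g_{12} \equiv 0$ (so also $g^{11} = 1/g_{11}$, $g^{22} = 1$, $g^{12} = 0$). In these coordinates the covector $\xi(s)$ along the bicharacteristic through $(x, \xi) \in T^* M \setminus 0$ satisfies exactly the equations \eqref{Hamilton's eqn's}, and the second one specializes on $\gamma$ (where $x_2 = 0$) to
\begin{equation*}
\dot{\xi}_2(s) \;=\; -\,\frac{\partial_{x_2} g^{11}(x(s))\,\xi_1(s)^2}{|\xi(s)|_{g(x(s))}}.
\end{equation*}

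Next, I would translate the hypothesis $g(D_t\gamma', D_t\gamma') \neq 0$ into nonvanishing of $\partial_{x_2} g^{11}$ on $\gamma$. Since $\gamma'(s) = \partial_1$ along $\gamma$, one has $D_t\gamma' = \Gamma^k_{11}(x_1, 0)\,\partial_k$. Applying $\Gamma^k_{ij} = \tfrac{1}{2} g^{kl}(\partial_i g_{jl} + \partial_j g_{il} - \partial_l g_{ij})$ with $g_{12}\equiv 0$ and $g_{11}(x_1,0)\equiv 1$ (so $\partial_{x_1} g_{11}(x_1,0) = 0$), I obtain $\Gamma^1_{11}(x_1, 0) = 0$ and $\Gamma^2_{11}(x_1, 0) = -\tfrac{1}{2}\,\partial_{x_2} g_{11}(x_1, 0)$. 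Consequently
\begin{equation*}
g(D_t\gamma', D_t\gamma') \;=\; \tfrac{1}{4}\bigl(\partial_{x_2} g_{11}(x_1, 0)\bigr)^2,
\end{equation*}
so the assumption is equivalent to $\partial_{x_2} g_{11}(x_1, 0) \neq 0$, and differentiating $g^{11} g_{11} = 1$ at $x_2 = 0$ (where $g_{11} = g^{11} = 1$) yields $\partial_{x_2} g^{11}(x_1, 0) = -\,\partial_{x_2} g_{11}(x_1, 0) \neq 0$ for every $x_1 \in \gamma$. Since $\gamma$ is connected (a unit-length arc) and $\partial_{x_2} g^{11}$ is continuous, its sign is constant along $\gamma$.

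Finally, I would simply combine the two pieces: on the symbol support of $\tilde{a}_j(s, x, y, \cdot)$ the frequencies satisfy $|\xi|_g \approx \lambda$, so the denominator in the displayed formula for $\dot{\xi}_2(s)$ is strictly positive; the hypothesis $\xi_1(s) \neq 0$ forces $\xi_1(s)^2 > 0$; and $\partial_{x_2} g^{11}(x(s)) \neq 0$ by the previous paragraph (with constant sign). Thus $\dot{\xi}_2(s)$ is nonzero with sign equal to $-\,\mathrm{sgn}\bigl(\partial_{x_2} g^{11}\bigr)$, which is the same throughout $\gamma$. This gives the conclusion.

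The only step where one can easily slip up is the Christoffel calculation identifying the geodesic curvature with $\tfrac{1}{2}|\partial_{x_2} g_{11}(x_1, 0)|$ in Fermi coordinates; once that identification is in hand, the result is essentially immediate from Hamilton's equation, so I would not expect any deeper obstacle.
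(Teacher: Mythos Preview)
Your proposal is correct and follows essentially the same approach as the paper: both reduce to showing $\partial_{x_2} g^{11}(x_1,0)\neq 0$ from the nonvanishing geodesic curvature and then read off the sign of $\dot\xi_2$ from Hamilton's equation \eqref{Hamilton's eqn's}. The only cosmetic difference is that you compute $D_t\gamma'$ via Christoffel symbols (using the block form of $g$ in Fermi coordinates), whereas the paper derives the identity $\partial_{x_2} g_{11}(x_1,0)=-2\langle \nabla_{\partial_1}\partial_1,\partial_2\rangle_g$ directly from metric compatibility and symmetry of the connection; one very small point is that the relevant constancy of sign is along the bicharacteristic $s\mapsto x(s)$ (which for $|s|\ll 1$ stays in a neighborhood of $\gamma$ where $\partial_{x_2}g^{11}$ remains nonvanishing by continuity), not merely along $\gamma$ itself.
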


\begin{proof}
We know from the curvature assumption of $\gamma$ that
\begin{align*}
    |\nabla_{\partial_1} \partial_1 |_g \not=0,\quad \text{where }\partial_1=\frac{\partial}{\partial x_1}\text{ and } \partial_2=\frac{\partial}{\partial x_2},
\end{align*}
where $\nabla$ denotes the Levi-Civita connection. Note that
\begin{align*}
    0=\frac{\partial}{\partial x_1} \langle \partial_1, \partial_2 \rangle_g=\langle \nabla_{\partial_1} \partial_1, \partial_2 \rangle_g+\langle \partial_1, \nabla_{\partial_1} \partial_2 \rangle_g.
\end{align*}
But since $[\partial_1, \partial_2]=0$ and the Levi-Civita connection is symmetric, we have that
\begin{align*}
    \langle \partial_1, \nabla_{\partial_1} \partial_2 \rangle_g =\langle \partial_1, \nabla_{\partial_2} \partial_1 \rangle_g +\langle \partial_1, [\partial_1, \partial_2] \rangle_g =\frac{1}{2} \frac{\partial}{\partial x_2} |\partial_1|_g^2=\frac{1}{2}\frac{\partial}{\partial x_2} g_{11} (x_1, 0).
\end{align*}
Combining these two, we have that
\begin{align}\label{g11 x2 derivative}
    \frac{\partial}{\partial x_2} g_{11} (x_1, 0)=-2\langle \nabla_{\partial_1} \partial_1, \partial_2 \rangle_g.
\end{align}
Since $|\partial_1|_g=1$ along $x_2=0$ by the arc-length parametrization, we have that
\begin{align*}
    \langle \nabla_{\partial_1} \partial_1, \partial_1 \rangle_g=\frac{1}{2}\frac{\partial}{\partial x_1} |\partial_1|_g^2=0,
\end{align*}
and thus,
\begin{align*}
    \nabla_{\partial_1} \partial_1=\langle \nabla_{\partial_1} \partial_1, \partial_1 \rangle_g \partial_1+\langle \nabla_{\partial_1} \partial_1, \partial_2 \rangle_g \partial_2=c\partial_2,
\end{align*}
for some $c\not=0$ due to the assumption $|\nabla_{\partial_1} \partial_1|_g\not=0$. By \eqref{g11 x2 derivative} with this, we have that
\begin{align*}
    -\frac{\partial}{\partial x_2} g^{11}(x)\not=0, \quad \text{on } x_2=0,
\end{align*}
since $g^{11}=g_{11}^{-1}$ (cf. \cite[Proposition 5.26]{Lee2018secondEd}), and this also holds on a neighborhood of $x_2=0$. Since we are assuming $\xi_1\not=0$, by the above Hamilton's equation, we have that
\begin{align*}
    \dot{\xi}_2 (s)=-\partial_2 g^{11}(x)\xi_1^2\not=0,\quad \text{along } x_2=0.
\end{align*}

This completes the proof.
\end{proof}

We next consider the $\xi_2$ derivative of $\varphi$.

\begin{lemma}\label{Lemma xi2 deriv of varphi}
Suppose $\xi \in \mathrm{supp}(q_{j, +}(x, y, \lambda(\cdot)))$, $\xi_1 \not=0$, and $d_x \varphi (t, x, \xi)\in \mathrm{supp}(q_{j, +}(x, y, \lambda(\cdot)))$ for some $x\in \gamma$, i.e., $x_2=0$ in Fermi coordinates. Then there exists a uniform constant $C>0$ such that $|\partial_{\xi_2} \varphi(t, x, \xi)|\geq C2^{-j} |t|$.
\end{lemma}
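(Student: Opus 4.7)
The plan is to interpret $\partial_{\xi_2}\varphi(t,x,\xi)$ as the $x_2$-coordinate of the starting point of a bicharacteristic, and then reduce the bound to a pointwise estimate on $\xi_2(s)/|\xi(s)|_{g(x(s))}$ along this trajectory. By the construction of $\varphi$ in \eqref{varphi construction}, if $(x(s),\xi(s))$ denotes the Hamiltonian flow with $(x(0),\xi(0))=(y,\xi)$, where $y=d_\xi\varphi(t,x,\xi)$, then $(x(t),\xi(t))=(x,d_x\varphi(t,x,\xi))$. Using the hypothesis $x_2(t)=x_2=0$ and Hamilton's equation \eqref{Hamilton's eqn's} for $\dot x_2$, the fundamental theorem of calculus yields
\begin{equation*}
\partial_{\xi_2}\varphi(t,x,\xi)\;=\;y_2\;=\;x_2(0)\;=\;-\int_0^t \frac{\xi_2(s)}{|\xi(s)|_{g(x(s))}}\,ds.
\end{equation*}

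The heart of the proof is then to show the integrand is of constant sign and of size $\approx 2^{-j}$ along the trajectory. Since the Hamiltonian $p(x,\xi)=|\xi|_{g(x)}$ is conserved along $\kappa_s$, $E:=|\xi(s)|_{g(x(s))}$ is a positive constant, uniformly bounded above and below because of the factor $\Upsilon(|\xi|_g/\lambda)$ in $q_{j,+}$. The two support hypotheses $\xi,\,d_x\varphi(t,x,\xi)\in\mathrm{supp}(q_{j,+}(x,y,\lambda(\cdot)))$, together with the defining factor $\chi_1(+2^j\xi_2/|\xi|_g)$ and the homogeneity of the quotient $\xi_2/|\xi|_g$, force
\begin{equation*}
\frac{\xi_2(0)}{E},\;\frac{\xi_2(t)}{E}\;\in\;\bigl(2^{-j-1},\,2^{-j+1}\bigr),
\end{equation*}
in particular both strictly positive. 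Separately, the hypothesis $\xi_1\ne 0$, together with continuity and the smallness of $t$, gives $\xi_1(s)\ne 0$ on $[0,t]$. Hence Lemma~\ref{Lemma xi2 dot positive}---whose proof really only needs that $x(s)$ remains in the Fermi-coordinate neighborhood of $\gamma$ where $-\partial_{x_2}g^{11}\ne 0$, which is automatic for small $t$---applies along the trajectory and yields $\dot\xi_2(s)\ne 0$ on $[0,t]$. By continuity $\dot\xi_2$ has constant sign there, so $s\mapsto \xi_2(s)$ is monotone.

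A monotone function whose two endpoint values are both at least $\tfrac12\,2^{-j}E>0$ must satisfy $\xi_2(s)\ge \tfrac12\,2^{-j}E$ for every $s$ between $0$ and $t$. Integrating,
\begin{equation*}
|\partial_{\xi_2}\varphi(t,x,\xi)|\;=\;\Big|\int_0^t \frac{\xi_2(s)}{E}\,ds\Big|\;\ge\;\tfrac12\,2^{-j}|t|,
\end{equation*}
which gives the claimed bound (with $C=1/2$). The main delicacy is the simultaneous use of two pieces of information: the constant sign of $\dot\xi_2$ from Lemma~\ref{Lemma xi2 dot positive} (which requires $\xi_1(s)\ne 0$ together with the nonvanishing geodesic curvature input to Fermi coordinates) and the positivity of $\xi_2$ at \emph{both} endpoints (which comes from choosing the $+$ piece $q_{j,+}$, not merely $q_j$). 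Without either ingredient, the monotone function $\xi_2(s)$ could in principle cross zero inside $[0,t]$ and the integral could be far smaller than $2^{-j}|t|$, destroying the lower bound.
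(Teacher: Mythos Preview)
Your proof is correct and follows essentially the same route as the paper: both identify $\partial_{\xi_2}\varphi(t,x,\xi)$ with the second coordinate of the initial point of the bicharacteristic, use the support hypotheses on $q_{j,+}$ to pin $\xi_2(0)/E$ and $\xi_2(t)/E$ in $(2^{-j-1},2^{-j+1})$, invoke Lemma~\ref{Lemma xi2 dot positive} for monotonicity of $\xi_2(s)$, and then integrate (the paper uses the mean value theorem in place of your fundamental-theorem-of-calculus step, and normalizes $|\xi|_g=1$ rather than carrying the constant $E$). Your remark that $\xi_1(s)\ne 0$ along the whole trajectory is the only point that deserves a slightly sharper justification than ``continuity and smallness of $t$'': the cleanest way is the ODE observation already used in Lemma~\ref{Lemma xi2 with vanishing xi1}, namely that $\dot\xi_1=-\partial_{x_1}g^{11}(x)\xi_1^2/|\xi|_g$ forces $\xi_1\equiv 0$ whenever it vanishes once, contradicting $\xi_1(0)\ne 0$.
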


\begin{proof}
We use Fermi coordinates to prove this lemma. Suppose $(z(s), \zeta(s))$ is the curve such that $z(t)=x$ and $\zeta(0)=\xi$. Without loss of generality, by homogeneity we assume $|\xi|_g=1$ since $|\xi|_g\approx 1$ for $\xi\in \mathrm{supp}(q_{j, +}(x, y, \lambda(\cdot)))$. It follows from \eqref{varphi construction} that $d_x \varphi(t, x, \xi)=\zeta(t)$ and $d_\xi \varphi(t, x, \xi)=z(0)$, and thus,
\begin{align*}
    \chi_1 (2^j \zeta_2 (0))\not= 0 \quad \text{and} \quad \chi_1 (2^j \zeta_2 (t))\not=0, \quad \text{i.e.,} \quad \zeta_2 (0)\approx2^{-j} \quad \text{and} \quad \zeta_2 (t)\approx 2^{-j}.
\end{align*}
Thus, if $0\leq s\leq t$, we have $\zeta_2 (s)\approx 2^{-j}$ since the map $s\mapsto \zeta_2 (s)$ is monotonic in $s$, due to the fact that either $\dot{\zeta}_2 (s)>0$ or $\dot{\zeta}_2 (s)<0$ by Lemma \ref{Lemma xi2 dot positive}. Similarly, the map $s\mapsto \zeta_2 (s)$ is monotonic in $s$ when $t\leq s\leq 0$. In any cases, we have $|\zeta_2 (s)|\geq C 2^{-j}$ when $s$ is between $0$ and $t$.

Now recall that $\dot{z}_2 (s)=\zeta_2 (s)$ since we are in Fermi coordinates. Thus, since $z(t)=x\in \gamma$, we have $z_2 (t)=0$, and so, the mean value theorem gives
\begin{align*}
    0=z_2 (0)+t\dot{z}_2 (\Tilde{c}),
\end{align*}
for some $\Tilde{c}$ between $0$ and $t$. This gives
\begin{align*}
    |\partial_{\xi_2} \varphi (t, x, \xi)|=|z_2 (0)|=|t\dot{z}_2 (\Tilde{c})|=|t\zeta_2 (\Tilde{c})| \geq C|t|2^{-j},
\end{align*}
for some uniform constant $C>0$.
\end{proof}

We now return to the kernels $K_{j, +} (x, y)$. By Lemma \ref{Lemma Semiclassical version of SP q+}, we write
\begin{align*}
    K_{j, +} (x, y)=\lambda^2 \iint e^{i\lambda(t+\varphi(t, x, \xi)-y\cdot \xi)} \Tilde{a}_j (t, x, y, \xi) \widehat{\chi^2}(t)\:d\xi\:dt,
\end{align*}
where $\Tilde{a}_j (t, x, y, \xi)=0$ unless
\begin{align*}
    \chi_1 \left(2^j \frac{\partial_{x_2} \varphi(t, x, \xi)}{|d_x \varphi(t, x, \xi)|_g} \right)\not=0, \quad \chi_1 \left(2^j \frac{\xi_2}{|\xi|_g} \right)\not=0,\quad \text{and} \quad |\xi|_g \in [\frac{c_1}{2}, 2c_1^{-1}],
\end{align*}
for some small constant $c_1 >0$. Moreover, we have
\begin{align}\label{Size estimates for a tilde}
    |\partial_t^k \partial_{\xi_1}^l \partial_{\xi_2}^m \Tilde{a}_j |\leq C_{k, l, m} 2^{jm}.
\end{align}
Here, we used \eqref{Size estimates for a} and size estimates of $q_j$ and  $\kappa_t^* q_j$, since $|\xi|_g \approx 1$ by the support properties of $\Upsilon$. Also, note that $y_2=0$ in Fermi coordinates if $y=(y_1, y_2)\in \gamma$. If we set
\begin{align*}
    L_\xi=\frac{1-i\lambda(\partial_{\xi_1} \varphi(t, x, \xi)-y_1)\partial_{\xi_1}-i\lambda 2^{-2j} \partial_{\xi_2} \varphi(t, x, \xi) \partial_{\xi_2} }{1+\lambda^2 |\partial_{\xi_1} \varphi(t, x, \xi)-y_1|^2+\lambda^2 2^{-2j} |\partial_{\xi_2} \varphi(t, x, \xi)|^2 }
\end{align*}
then we have
\begin{align*}
    L_\xi (e^{i\lambda(t+\varphi(t, x, \xi)-y_1 \xi_1)})=e^{i\lambda(t+\varphi(t, x, \xi)-y_1 \xi_1)}.
\end{align*}
Integration by parts gives us that
\begin{align*}
    \left|\int e^{i\lambda(t+\varphi(t, x, \xi)-y_1 \xi_1)} \Tilde{a}_j (t, x, y, \xi)\:d\xi \right|=\left|\int e^{i\lambda(t+\varphi(t, x, \xi)-y_1 \xi_1)} (L_\xi^T)^N (\Tilde{a}_j (t, x, y, \xi))\:d\xi\right|,
\end{align*}
where $L_\xi^T$ is the transpose of $L_\xi$. For simplicity, we set
\begin{align*}
    & w_1=\lambda(\partial_{\xi_1} \varphi-y_1),\quad w_2=\lambda 2^{-j} \partial_{\xi_2} \varphi, \\
    & L=L_{\xi}=\frac{1-iw_1 \partial_{\xi_1}-iw_2 2^{-j} \partial_{\xi_2} }{1+w_1^2+w_2^2}.
\end{align*}
Here, $w_1$ and $w_2$ are functions of $\lambda, t, x, y, \xi$ and we suppress the arguments for convenience if necessary. We then write (up to signs)
\begin{align*}
    L^T \Tilde{a}_j=A_0+A_1+A_2+A_3+A_4,
\end{align*}
where
\begin{align*}
    & A_0=\frac{1}{1+w_1^2+w_2^2} \Tilde{a}_j,\quad  A_1=\frac{i w_1}{1+w_1^2+w_2^2} \partial_{\xi_1} \Tilde{a}_j, \quad A_2=\frac{i w_2}{1+w_1^2+w_2^2} 2^{-j} \partial_{\xi_2} \Tilde{a}_j, \\
    & A_3=\Tilde{a}_j \partial_{\xi_1} \left( \frac{i w_1}{1+w_1^2+w_2^2} \right),\quad A_4=\Tilde{a}_j 2^{-j} \partial_{\xi_2} \left(\frac{i w_2}{1+w_1^2+w_2^2} \right).
\end{align*}
By \eqref{Size estimates for a tilde}, we have
\begin{align}\label{A1A2 approx}
    |A_0|,\; |A_1|,\; |A_2|\leq \frac{1}{(1+w_1^2+w_2^2)^{\frac{1}{2}}}.
\end{align}
We note that $A_3$ is
\begin{align*}
    \Tilde{a}_j\partial_{\xi_1} \left(\frac{i w_1}{1+w_1^2+w_2^2} \right)=\Tilde{a}_j\partial_{w_1} \left(\frac{i w_1}{1+w_1^2+w_2^2} \right)\partial_{\xi_1} w_1+\Tilde{a}_j\partial_{w_2} \left(\frac{iw_1}{1+w_1^2+w_2^2} \right)\partial_{\xi_1} w_2,
\end{align*}
where $\partial_{\xi_1} w_1=\lambda \partial_{\xi_1}^2 \varphi$ and $\partial_{\xi_1} w_2=\lambda 2^{-j} \partial_{\xi_1 \xi_2}^2 \varphi$. Similarly, $A_4$ is
\begin{align*}
    \Tilde{a}_j 2^{-j} \partial_{w_1}\left(\frac{iw_2}{1+w_1^2+w_2^2} \right) \partial_{\xi_2} w_1+\Tilde{a}_j 2^{-j}\partial_{w_2} \left(\frac{iw_2}{1+w_1^2+w_2^2} \right) \partial_{\xi_2} w_2,
\end{align*}
where $\partial_{\xi_2} w_1=\lambda \partial_{\xi_1 \xi_2}^2 \varphi$ and $\partial_{\xi_2} w_2=\lambda \partial_{\xi_2}^2 \varphi$. Both $A_3$ and $A_4$ contain terms of the form $\partial^\alpha \varphi$ for $|\alpha|\geq 2$, and we want to approximate these first. Recall that we are assuming $|t|\lesssim 1$, by the support properties of $\chi$.

\begin{lemma}\label{Lemma varphi derivative}
If $|\alpha|\geq 2$ for $\alpha=(\alpha_1, \alpha_2)$, then
\[
|\partial_\xi^\alpha \varphi|\lesssim \begin{cases}
|\xi_2|^2 |t|, & \text{if } \alpha_2=0, \\
|\xi_2| |t|, & \text{if } \alpha_2=1, \\
|t|, & \text{for any } |\alpha| \geq 2.
\end{cases}
\]
\end{lemma}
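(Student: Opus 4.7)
The plan is to use the initial condition $\varphi(0,x,\xi) = \langle x,\xi\rangle$ from \eqref{varphi construction} together with the eikonal equation $\partial_t \varphi + p(x, d_x\varphi) = 0$, where in Fermi coordinates the principal symbol takes the simple form $p(x,\xi) = \sqrt{g^{11}(x)\xi_1^2 + \xi_2^2}$ throughout the tubular neighborhood of $\gamma$ (the Gauss-lemma-type structure of Fermi coordinates forces the off-diagonal metric components to vanish identically, not merely on $\gamma$).

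For any multi-index $\alpha$ with $|\alpha|\geq 2$, linearity of $\varphi(0,x,\xi) = x_1\xi_1 + x_2\xi_2$ in $\xi$ gives $\partial_\xi^\alpha\varphi(0,x,\xi) = 0$. Writing $\partial_\xi^\alpha\varphi(t,x,\xi) = \int_0^t \partial_s\partial_\xi^\alpha\varphi(s,x,\xi)\,ds$ and using smoothness of $\varphi$ on the compact set where $|\xi|_g \approx 1$ and $|t|\lesssim 1$ yields the uniform bound $|\partial_\xi^\alpha\varphi|\lesssim |t|$, handling the third case.

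To extract the $\xi_2$-structure in the first two cases, I would Taylor expand $\varphi$ in $t$ around $t=0$, solving iteratively from the eikonal equation to obtain a formal series $\varphi(t,x,\xi) = \langle x,\xi\rangle + \sum_{k\geq 1} t^k r_k(x,\xi)$ with $r_1 = -p$ and each $r_k$ a specific polynomial in the $(x,\xi)$-derivatives of $p$, determined by the recursion coming from differentiating $\partial_t\varphi = -p(x,d_x\varphi)$ in $t$ and evaluating at $t=0$. Direct differentiation of $p$ yields $\partial_{\xi_1}^2 p = g^{11}\xi_2^2/p^3$ and $\partial_{\xi_1}\partial_{\xi_2} p = -g^{11}\xi_1\xi_2/p^3$, and a straightforward induction on $\alpha_1$ shows every term in $\partial_{\xi_1}^{\alpha_1} p$ with $\alpha_1\geq 2$ carries a factor of $\xi_2^2$, while every term in $\partial_{\xi_1}^{\alpha_1}\partial_{\xi_2}p$ with $\alpha_1\geq 1$ carries a factor of $\xi_2$, precisely because no $\xi_1\xi_2$ cross term appears in $p^2 = g^{11}\xi_1^2 + \xi_2^2$. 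A further induction on $k$ transfers this $\xi_2$-bookkeeping to every $r_k$; summing the resulting geometric series in $t$ (convergent for $|t|\ll 1$) gives $|\partial_\xi^\alpha\varphi(t,x,\xi)|\lesssim |\xi_2|^{2-\alpha_2}|t|$ for $\alpha_2\in\{0,1\}$, as required.

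The main obstacle is the combinatorial bookkeeping in tracking powers of $\xi_2$ through the recursion for the $r_k$: while each $r_k$ is a polynomial in $\xi$-derivatives of $p$, one must verify that no chain-rule contribution arising from $\partial_\xi$ falling on $d_x\varphi$ generates an extra $\partial_{\xi_1}$ action that strips all the $\xi_2$ factors. The absence of a $\xi_1\xi_2$ cross term in $p^2$ is exactly the structural feature that makes this induction work, so modulo verifying this at each recursive step, the rest of the argument is bookkeeping.
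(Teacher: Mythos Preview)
Your treatment of the third case (the generic bound $|\partial_\xi^\alpha\varphi|\lesssim |t|$ via the integral representation) matches the paper's. For the first two cases the approaches diverge sharply. The paper does \emph{not} Taylor-expand in $t$ or run any recursion on coefficients $r_k$; instead it exploits the fact that $\varphi(t,x,\xi)$ is homogeneous of degree $1$ in $\xi$. Euler's identity applied to $\Phi=\partial_{\xi_2}\varphi$ (degree $0$) gives $\partial_{\xi_1\xi_2}^2\varphi=-(\xi_2/\xi_1)\,\partial_{\xi_2}^2\varphi$, and then applied to $\Phi=\partial_{\xi_1}\varphi$ gives $\partial_{\xi_1}^2\varphi=(\xi_2/\xi_1)^2\,\partial_{\xi_2}^2\varphi$. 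Since the generic bound already yields $|\partial_{\xi_2}^2\varphi|\lesssim|t|$, and one may assume $|\xi_1|\approx 1$ (the case $|\xi_2|\approx 1$ being trivial), the two refined estimates drop out in two lines; higher derivatives follow by iterating Euler's identity. This completely bypasses the combinatorial bookkeeping you flag as the main obstacle.

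Your route has a genuine gap beyond that bookkeeping: the formal series $\sum_{k\ge1}t^k r_k(x,\xi)$ need not converge when the metric is merely $C^\infty$ rather than real-analytic, so ``summing the resulting geometric series'' is not justified. One could try to repair this by truncating and controlling the Taylor remainder, but then the remainder must itself carry the factor $|\xi_2|^{2-\alpha_2}$, and establishing that forces you back into exactly the chain-rule induction you left unfinished---now for $\partial_\xi^\alpha[p(x,d_x\varphi(s,x,\xi))]$ at \emph{all} $s$, not just $s=0$, which in turn requires control of mixed $x,\xi$-derivatives of $\varphi$ that the lemma does not even address. The homogeneity argument sidesteps every one of these issues.
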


\begin{proof}
It follows from \eqref{varphi construction} that
\begin{align*}
    \varphi(t, x, \xi)=x\cdot \xi-\int_0^t p(x, \nabla_x \varphi(s, x, \xi))\:ds,
\end{align*}
where $p(x, \xi)=|\xi|_{g(x)}$. For any $|\alpha|\geq 2$, we obtain
\begin{align}\label{varphi deriv for any alpha}
    |\partial_{\xi}^\alpha \varphi|=\left|-\int_0^t \partial_\xi^\alpha (p(x, \nabla_x \varphi(s, x, \xi)))\:ds \right| \leq |t| \sup_\xi \big[\partial^\alpha (p(x, \nabla_x \varphi(s, x, \xi)))\big] \leq C_\alpha |t|.
\end{align}
We now focus on $\alpha_2=0$ or $\alpha_2=1$.

On the other hand, if $\Phi (\xi)$ is homogeneous of degree $-k$, then, by Euler's homogeneous theorem, we have
\begin{align}\label{Homogeneous result}
    \xi_1 \partial_{\xi_1} \Phi +\xi_2 \partial_{\xi_2} \Phi=-k \Phi.
\end{align}
Since $|\xi| \approx 1$, we have either $|\xi_1|\approx 1$ or $|\xi_2| \approx 1$. The case of $|\xi_1|\leq |\xi_2|$ is simpler. Indeed, if $|\xi_1|\leq |\xi_2|$, then $|\xi_2|\approx 1$, and so, by \eqref{varphi deriv for any alpha}, we have $|\partial_\xi^\alpha \varphi|\leq C_\alpha  |\xi_2|^l |t|$ for any nonnegative integer $l$.

Thus, we may assume that $|\xi_2|\leq |\xi_1|$, and so, $|\xi_1| \approx 1$. Taking $\Phi=\partial_{\xi_2} \varphi$ with $k=0$ in \eqref{Homogeneous result}, it follows from \eqref{varphi deriv for any alpha} that
\begin{align}\label{xi1 xi2 deriv of varphi}
    |\partial_{\xi_1 \xi_2}^2 \varphi|=\left|\frac{\xi_2}{\xi_1} \partial_{\xi_2}^2 \varphi \right|\lesssim |\xi_2| |t|.
\end{align}
Using this, if we take $\Phi=\partial_{\xi_1} \varphi$ with $k=0$ in \eqref{Homogeneous result}, then we have that
\begin{align*}
    |\partial_{\xi_1}^2 \varphi|=\left|\frac{\xi_2}{\xi_1} \partial_{\xi_2 \xi_1}^2 \varphi \right|\lesssim |\xi_2| (|\xi_2| |t|)=|\xi_2|^2 |t|.
\end{align*}
We can also compute $\partial_{\xi_1 \xi_1 \xi_2}^3 \varphi$ taking $\Phi=\partial_{\xi_1 \xi_2}^2 \varphi$ with $k=-1$
\begin{align*}
    \partial_{\xi_1 \xi_1 \xi_2}^3 \varphi=-\frac{1}{\xi_1} \partial_{\xi_1 \xi_2}^2 \varphi-\frac{\xi_2}{\xi_1} \partial_{\xi_1 \xi_2 \xi_2}^3 \varphi.
\end{align*}
By \eqref{varphi deriv for any alpha} and \eqref{xi1 xi2 deriv of varphi}, we have $|\partial_{\xi_1 \xi_1 \xi_2}^3 \varphi|\lesssim |\xi_2| |t|$. Similarly, we can find the estimate for $\partial_{\xi_1 \xi_1 \xi_1}^3 \varphi$. The higher order derivatives of $\varphi$ are bounded by induction and repeated use of \eqref{xi1 xi2 deriv of varphi}.
\end{proof}
By Lemma \ref{Lemma xi2 deriv of varphi}, we have $|\partial_{\xi_2}\varphi|\gtrsim |\xi_2| |t|$. By this and Lemma \ref{Lemma varphi derivative}, we have that
\begin{align*}
    & |\partial_{\xi_1} w_1|=|\lambda \partial_{\xi_1}^2 \varphi|\lesssim \lambda|\xi_2|^2 |t| \lesssim \lambda |\partial_{\xi_2} \varphi| |\xi_2| \lesssim \lambda |\partial_{\xi_2} \varphi| 2^{-j} \lesssim |w_2|, \\
    & |2^{-j} \partial_{\xi_2} w_1|=|\lambda 2^{-j} \partial_{\xi_1 \xi_2}^2 \varphi|\lesssim \lambda 2^{-j} |\xi_2| |t| \lesssim \lambda 2^{-j} |\partial_{\xi_2} \varphi| \lesssim |w_2|, \\
    & |\partial_{\xi_1} w_2|=|\lambda 2^{-j} \partial_{\xi_1 \xi_2}^2 \varphi|\lesssim \lambda 2^{-j} |\xi_2| |t| \lesssim \lambda 2^{-j} |\partial_{\xi_2} \varphi|= |w_2|, \\
    & |2^{-j} \partial_{\xi_2} w_2|=|\lambda (2^{-j})^2 \partial_{\xi_2}^2 \varphi|\lesssim \lambda 2^{-j} |\xi_2| |t| \lesssim \lambda 2^{-j}|\partial_{\xi_2} \varphi|= |w_2|.
\end{align*}
We also have that
\begin{align*}
    \partial_{w_l} \left(\frac{w_k}{1+w_1^2+w_2^2} \right) \lesssim \frac{1}{1+w_1^2+w_2^2},\quad l, k\in \{1, 2\}.
\end{align*}
Combining these together, we have that
\begin{align*}
    |A_3|,\; |A_4| \lesssim \frac{|w_2|}{1+w_1^2+w_2^2}\leq \frac{(1+w_1^2+w_2^2)^{\frac{1}{2}}}{1+w_1^2+w_2^2}=\frac{1}{(1+w_1^2+w_2^2)^{\frac{1}{2}}}.
\end{align*}
By this and \eqref{A1A2 approx}, we have
\begin{align}\label{L transpose estimate}
    |L^T \Tilde{a}_j|\lesssim \frac{1}{(1+w_1^2+w_2^2)^{\frac{1}{2}}}.
\end{align}

Inductively, we can obtain
\begin{align*}
    |(L^T)^N \Tilde{a}_j|\lesssim (1+w_1^2+w_2^2)^{-\frac{N}{2}} \lesssim (1+|w_1|+|w_2|)^{-N}.
\end{align*}
Hence, integration by parts gives, for $x, y\in \gamma$,
\begin{align*}
    \left|\int e^{i\lambda(t+\varphi(t, x, \xi)-y_1 \xi_1)} \Tilde{a}_j (t, x, y, \xi)\:d\xi \right| &=\left|\int (L_\xi)^{N} (e^{i\lambda (t+\varphi(t, x, \xi)-y_1 \xi_1)}) \Tilde{a}_j (t, x, y, \xi)\:d\xi\right| \\
    &=\left|\int e^{i\lambda(t+\varphi(t, x, \xi)-y_1 \xi_1)} (L_\xi^T)^{N} (\Tilde{a}_j (t, x, y, \xi))\:d\xi \right| \\
    &\lesssim \int (1+\lambda|\partial_{\xi_1} \varphi(t, x, \xi)-y_1|+\lambda 2^{-j} |\partial_{\xi_2} \varphi(t, x, \xi)|)^{-N}\:d\xi,
\end{align*}
and thus, we have that
\begin{align*}
    |K_{j, +} (x_1, 0, y_1, 0)|\leq C_N \lambda^2 \iint_{\mathrm{supp}(q_j) }|\widehat{\chi^2}(t)| \left( 1+\lambda|\partial_{\xi_1 }\varphi(t, x, \xi)-y_1|+\lambda 2^{-j}|\partial_{\xi_2} \varphi(t, x, \xi)| \right)^{-N}\:d\xi\:dt.
\end{align*}
In Fermi coordinates, we can write $\gamma=\{(x_1, 0): |x_1|\leq \epsilon\}$ for some small $\epsilon>0$, and so, we may write $x=(x_1, 0)$ and $y=(y_1, 0)$. To show \eqref{Qj+ estimates}, we now want to show that
\begin{align*}
    \int |K_{j, +} (x_1, 0, y_1, 0)|\:dx_1 \lesssim 2^j,\quad \text{and} \quad \int |K_j (x_1, 0, y_1, 0)|\:dy_1 \lesssim 2^j.
\end{align*}
To see these, first note that, by Lemma \ref{Lemma xi2 with vanishing xi1} and Lemma \ref{Lemma xi2 deriv of varphi}, we have $|\partial_{\xi_2} \varphi(t, x, \xi)|\gtrsim 2^{-j}|t|$ in both cases $\xi_1\not=0$ and $\xi_1=0$, and so, we have that
\begin{align*}
    |K_{j, +} (x_1, 0, y_1, 0)|\leq C_N \lambda^2 \iint_{\xi_2 \approx 2^{-j},|\xi|\approx 1} |\widehat{\chi^2}(t)| (1+\lambda |\partial_{\xi_1} \varphi (t, (x_1, 0), \xi)-y_1|+\lambda 2^{-2j}|t|)^{-N}\:dt\:d\xi,
\end{align*}
and thus, the second inequality follows from
\begin{align*}
    & \int |K_{j, +} (x_1, 0, y_1, 0)|\:dy_1 \\
    & \leq C_N \lambda^2 \int_{\xi_2 \approx 2^{-j}, |\xi| \approx 1 } \bigg(\iint |\widehat{\chi^2}(t)| (1+\lambda |\partial_{\xi_1} \varphi(t, (x_1, 0), \xi)-y_1|+\lambda 2^{-2j}|t| )^{-N}\:dt\:dy_1 \bigg) d\xi \\
    & \lesssim \lambda^2 (\lambda 2^{-2j})^{-1} \lambda^{-1} \mathrm{Vol}(\{\xi_2 \approx 2^{-j},\; |\xi| \approx 1\}) \\
    & \lesssim 2^{2j} 2^{-j}=2^j.
\end{align*}
Here, we gained $\lambda^{-1}$ from $y_1$ integration, $\lambda 2^{-2j}$ from $t$ integration, and $\mathrm{Vol}(\{|\xi_2|\approx 2^{-j}, |\xi|\approx 1\})$ from $\xi_2$ integration.

The proof that
\begin{align*}
    \int |K_{j, +} (x_1, 0, y_1, 0)|\:dx_1 \lesssim 2^j
\end{align*}
is similar, but it uses that $|\partial_{x_1 \xi_1}^2 \varphi (t, x, \xi)|\geq c>0$ for some small $c>0$, for $|\xi|_g \approx 1$ and $\xi_2 \approx 2^{-j}$, i.e., $|\xi_1|\approx 1$.

To see $|\partial_{x_1 \xi_1}^2 \varphi(t, x, \xi)|\gtrsim 1$, we recall that $\varphi$ satisfies $\varphi(0, x, \xi)=\langle x, \xi \rangle$ (cf. \cite[Lemma 10.5 (ii)]{Zworski2012Semiclassical}). By this, we have $|\partial_{x_1 \xi_1}^2 \varphi(t, x, \xi)|=1$ at $t=0$, and so, $|\partial_{x_1 \xi_1}^2 \varphi (t, x, \xi)|\gtrsim 1$ for small $t$ by continuity, but we can focus only on small $t$ by taking $\epsilon_0>0$ to be sufficiently small in \eqref{epsilon0 and chi}, and hence $|\partial_{x_1 \xi_1}^2 \varphi (t, x, \xi)|\gtrsim 1$ in the support of $K_{j, +}$.

This completes the proof of Proposition \ref{Prop: q plus minus estimates}.

\subsection{Proof of Proposition \ref{Prop: QJ estimates}}
In this subsection, by the $TT^*$ argument, we want to show that
\begin{align}\label{QJ TT* claim}
    \|Q_J \circ \chi^2 (\lambda-P) \circ Q_J^* f\|_{L^2 (\gamma)} \lesssim \lambda^{\frac{1}{3}} \|f\|_{L^2 (\gamma)},\quad J=\lfloor \log_2 \lambda^{\frac{1}{3}} \rfloor.
\end{align}
We obtain $K_J$, $\Tilde{a}_J$, $v_J$, etc., by replacing $j$ by $J$ in the settings of the previous section. We also ignore the contribution of the remainder after using Egorov's theorem.

Using the proof of Lemma \ref{Lemma Semiclassical version of SP q+}, we have the following lemma.

\begin{lemma}\label{Lemma Semiclassical SP QJ}
We have
\begin{align}\label{Semiclassical SP QJ result}
    \begin{split}
        (e^{-itP} \circ B_{t, J} \circ Q_J^*)(x, y)&=\lambda^2 \int e^{i\lambda (\varphi(t, x, \xi)-y\cdot \xi)} \Tilde{a}_J (t, x, y, \xi)\:d\xi \\
        &\hspace{50pt}+\frac{\lambda^6}{(2\pi)^4} \int e^{i\lambda (\varphi(t, x, \xi)-y\cdot \xi)} R_N (t, y) a(t, x, \lambda \xi)\:d\xi,
    \end{split}
\end{align}
where
\begin{align*}
    \Tilde{a}_J (t, x, y, \xi)=\sum_{l=0}^{N-1} \lambda^{-l} L_l v_J (x, y; t, y, \xi, y, \xi), \quad |\partial_t^\alpha R_N|\leq C_{N, \alpha} \lambda^{-\frac{N}{3}},
\end{align*}
and the $L_l$ are the differential operators with respect to $(w, \eta, z, \zeta)$ of order at most $2l$ acting on $v_J$ at the point $(w, \eta, z, \zeta)=(y, \xi, y, \xi)$.
\end{lemma}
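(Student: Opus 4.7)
The plan is to mirror the proof of Lemma \ref{Lemma Semiclassical version of SP q+} verbatim, exploiting the remark already made there: the stationary phase argument used only the fact that $2^{-j} \gtrsim \lambda^{-1/3}$, and this is exactly the scale at which $q_J$ lives since the cutoff $\Tilde{\chi}_J(\lambda^{1/3}|\xi(N)|/|\xi|_g)$ localizes $|\xi_2|/|\xi|_g$ to a band of width $\lambda^{-1/3}$. So the whole argument transfers with the replacement $2^j \leadsto \lambda^{1/3}$ in the size estimates.

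First, I would apply Egorov's theorem (as in \cite[Theorem 11.1]{Zworski2012Semiclassical}) to write $Q_J \circ e^{-itP} = e^{-itP} \circ B_{t,J}$, where the symbol of $B_{t,J}$ is $\kappa_t^\ast q_J + b'$ with $b'$ of lower order. The remainder $b'$ satisfies better estimates than $\kappa_t^\ast q_J$ and is compactly supported in $\kappa_{-t}(\mathrm{supp}(q_J))$, so its contribution can be treated by the same argument with room to spare and I would drop it, as in the previous subsection. As before, a stationary-phase--free argument (cutting off with a bump $\beta$ adapted to the support of the symbol of $B_{t,J} \circ Q_J^\ast$) lets me assume $a(t,x,\lambda\xi)$ is compactly supported in $\xi$ up to $O(\lambda^{-N})$ errors. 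Then I would write the kernel in the four-fold oscillatory form
\begin{align*}
    (e^{-itP}\circ B_{t,J}\circ Q_J^*)(x, y)=\frac{\lambda^6}{(2\pi)^4}\int e^{i\lambda(\varphi(t, x, \xi)-y\cdot \xi)} V_{J}(t, x, y, \xi)\:d\xi,
\end{align*}
where $V_J$, $\Phi$, and $v_J$ are defined exactly as in \eqref{Notation for semiclassical SP} with $(b_{t,j,+}, q_{j,+})$ replaced by $(b_{t,J}, q_J)$.

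Next, I would apply stationary phase in $(w, \eta, z, \zeta)$ using the method in \cite[Theorem 7.7.5]{Hormander2003LinearVol1}. The phase $\Phi$ is literally the same phase as in Lemma \ref{Lemma Semiclassical version of SP q+}, so the critical point is again $(w,\eta,z,\zeta) = (y, \xi, y, \xi)$, the Hessian is the same block matrix, and therefore $|\det \partial^2 \Phi| = 1$ and $\mathrm{sgn}(\partial^2 \Phi) = 0$, with no recomputation needed. The only input that changes is the size estimate on the amplitude: using the homogeneity of $q_J$ and $\kappa_t^\ast q_J$ together with \cite[Lemma 11.11]{Zworski2012Semiclassical}, and the fact that the $\Tilde{\chi}_J$ cutoff forces each $\partial_{\xi_2}$ to cost $\lambda^{1/3}$, I would verify
\begin{align*}
    |\partial_{w,\eta,z,\zeta}^\alpha v_J(t,w,\eta,z,\zeta)| \leq C_\alpha \lambda^{|\alpha|/3}.
\end{align*}
Plugging this into the stationary phase remainder bound gives $|R_N| \leq C_N \lambda^{-N}\sup_{|\alpha|\leq 2N}|\partial^\alpha v_J| \leq \Tilde{C}_N \lambda^{-N}\lambda^{2N/3} = \Tilde{C}_N \lambda^{-N/3}$, which is exactly the remainder stated in the lemma. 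The same bound holds for time derivatives $\partial_t^\alpha R_N$ since $t$-differentiation of $a$, $\varphi$, and $\kappa_t^\ast q_J$ is harmless on a bounded time interval. Extracting the polynomial prefactor $(2\pi/\lambda)^4$ from the stationary phase formula gives the factor of $\lambda^{-4}$ which cancels four powers of $\lambda$ from the $\lambda^6$ outside, producing the $\lambda^2$ in the first term of \eqref{Semiclassical SP QJ result}, and assembling the expansion with $\Tilde{a}_J(t,x,y,\xi)=\sum_{l=0}^{N-1}\lambda^{-l} L_l v_J(x,y;t,y,\xi,y,\xi)$ completes the proof.

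The only potentially delicate point is verifying that the symbol $b_{t,J} = \kappa_t^\ast q_J$ inherits the sharp size estimate $|\partial^\alpha b_{t,J}| \lesssim \lambda^{|\alpha|/3}$ from $q_J$ uniformly for $|t| \leq \epsilon_0$. This is a direct consequence of smoothness of the Hamiltonian flow $\kappa_t$ on the bounded time interval allowed by $\mathrm{supp}(\widehat{\chi^2}) \subset [-2\epsilon_0, 2\epsilon_0]$, together with the differentiation rules recorded in \cite[Lemma 11.11]{Zworski2012Semiclassical}; since the $\xi_2$-direction is mixed by the flow only at order $O(t)$, the scale $\lambda^{-1/3}$ is preserved up to uniform constants. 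Everything else in the argument is identical to that of Lemma \ref{Lemma Semiclassical version of SP q+}.
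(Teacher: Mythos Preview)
Your proposal is correct and follows exactly the approach of the paper, which simply invokes the proof of Lemma~\ref{Lemma Semiclassical version of SP q+} together with the Remark immediately following it noting that the argument only used $2^{-j}\gtrsim \lambda^{-1/3}$, a condition still satisfied at $j=J$. Your write-up is in fact more detailed than the paper's one-line justification, but the logic is identical.
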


By Lemma \ref{Lemma 2nd contribution small} and the generalized Young's inequality again, the contribution of the second term of the right hand side of \eqref{Semiclassical SP QJ result} is $O(1)$ with $N$ large, and so, we focus on the first term in \eqref{Semiclassical SP QJ result}.

Using the proof of Lemma \ref{Lemma xi2 dot positive}, we can also show that $\dot{\xi}_2$ is nonvanishing.

\begin{lemma}\label{Lemma xi2 dot positive qJ}
For $|s|\ll 1$, suppose $\xi(s)\in \mathrm{supp}(\Tilde{a}_J (s, x, y, \cdot))$. Let $\gamma$ be as above. If $\xi_1 (s)\not=0$ for any small $s$, then, for $x, y\in \gamma$, in Fermi coordinates, we have either $\dot{\xi}_2 (s)>0$ or $\dot{\xi}_2 (s)<0$.
\end{lemma}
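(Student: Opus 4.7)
The plan is to observe that the argument given for Lemma \ref{Lemma xi2 dot positive} never used the specific dyadic cutoff $\chi_1(\pm 2^j \xi_2/|\xi|_g)$ in the symbol $q_{j,+}$; it only used that $x \in \gamma$ (so $x_2=0$ in Fermi coordinates), that $\xi_1(s) \neq 0$, and the nonvanishing geodesic curvature hypothesis on $\gamma$. Since these three ingredients are still in force when we replace $q_{j,+}$ by $q_J$, the proof should transfer essentially verbatim.

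Concretely, I would repeat the geometric computation: from $g(D_t\gamma', D_t\gamma') \neq 0$, i.e.\ $|\nabla_{\partial_1}\partial_1|_g \neq 0$ along $\gamma$, together with the arc-length parametrization $|\partial_1|_g = 1$ on $\{x_2 = 0\}$, we obtain $\nabla_{\partial_1}\partial_1 = c\partial_2$ with $c \neq 0$ on $\gamma$. Then, as in \eqref{g11 x2 derivative}, this forces $\partial_{x_2} g_{11}(x_1, 0) = -2c \neq 0$, hence $\partial_{x_2} g^{11}(x_1, 0) \neq 0$, and by continuity $\partial_{x_2} g^{11} \neq 0$ on an open neighborhood of $\gamma$. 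Feeding this into Hamilton's equations \eqref{Hamilton's eqn's}, we get
\begin{align*}
    \dot{\xi}_2(s) = -\frac{\partial_{x_2} g^{11}(x(s))\,\xi_1(s)^2}{|\xi(s)|_{g(x(s))}} \neq 0
\end{align*}
along $\gamma$, provided $\xi_1(s) \neq 0$. Since $\dot{\xi}_2$ is a continuous function of $s$ and does not vanish at $s=0$, it retains a fixed sign for $|s| \ll 1$, giving either $\dot{\xi}_2(s) > 0$ or $\dot{\xi}_2(s) < 0$ throughout the range of interest.

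There is no real obstacle here: the assumption $\xi(s) \in \mathrm{supp}(\Tilde{a}_J(s,x,y,\cdot))$ plays only a mild role in ensuring $|\xi|_g \approx 1$ through the factor $\Upsilon(|\xi|_g/\lambda)$, which is exactly what is needed to keep the denominator $|\xi|_{g(x)}$ bounded away from zero and to justify the small-$s$ regime via $\mathrm{supp}(\widehat{\chi^2}) \subset [-2\epsilon_0, 2\epsilon_0]$. The narrower support of $q_J$ in the $\xi_2$ direction (of size $\lambda^{-1/3}$ rather than $2^{-j}$) is not used in the sign argument; it matters only for the later integration by parts estimates. Hence the conclusion follows from the identical geometric input as in Lemma \ref{Lemma xi2 dot positive}.
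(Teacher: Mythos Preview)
Your proposal is correct and matches the paper's approach exactly: the paper states this lemma without a separate proof, simply noting that ``using the proof of Lemma~\ref{Lemma xi2 dot positive}, we can also show that $\dot{\xi}_2$ is nonvanishing.'' Your observation that the argument of Lemma~\ref{Lemma xi2 dot positive} never used the specific dyadic cutoff and therefore transfers verbatim to the $q_J$ setting is precisely the point.
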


With this in mind, we figure out the support properties of $\Tilde{a}_J$.

\begin{lemma}\label{Lemma xi2 derivative of varphi qJ}
Suppose $\xi\in \mathrm{supp}(q_J (x, y, \lambda(\cdot)))$, and $d_x \varphi(t, x, \xi)\in \mathrm{supp}(q_J (x, y, \lambda (\cdot)))$ for some $x\in \gamma$, i.e., $x_2=0$ in Fermi coordinates. If $|t|\gg \lambda^{-\frac{1}{3}}$, then $\Tilde{a}_J (t, x, y, \xi)=0$, and thus, $\Tilde{a}_J$ is supported where $|t|\lesssim \lambda^{-\frac{1}{3}}$.
\end{lemma}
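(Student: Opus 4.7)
The plan is to show that if $\tilde{a}_J(t,x,y,\xi) \neq 0$, then $|t| \lesssim \lambda^{-1/3}$. By the construction of $\tilde{a}_J$ in Lemma \ref{Lemma Semiclassical SP QJ} and the factor $b_{t,J}(y,y,\lambda\xi)\, q_J(y,y,\lambda\xi) = q_J(\kappa_t(y,\lambda\xi))\, q_J(y,\lambda\xi)$ inside $v_J$, nonvanishing forces both $\xi$ and $d_x\varphi(t,x,\xi) = \xi(t)$ (the terminal covector under Hamilton's flow $\kappa_t$) to lie in the $\xi$-support of $q_J$. Since $p(x,\xi)=|\xi|_g$ is the Hamiltonian, $|\xi(s)|_g$ is conserved; together with the cutoff $\tilde{\chi}_J(\lambda^{1/3}|\xi(N)|/|\xi|_g)$ and $|\xi|_g \approx 1$, this forces, in Fermi coordinates,
\[
|\xi_2(0)| \lesssim \lambda^{-1/3} \qquad \text{and} \qquad |\xi_2(t)| \lesssim \lambda^{-1/3}.
\]

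First I would dispose of the degenerate case $\xi_1(s_0)=0$: by the uniqueness argument in Lemma \ref{Lemma xi2 with vanishing xi1}, this forces $\xi_1 \equiv 0$, hence $|\xi|_g = |\xi_2|$ and $|\xi_2|/|\xi|_g = 1$, contradicting the support condition for $\lambda$ large. Thus $\xi_1(s) \neq 0$ throughout, and Lemma \ref{Lemma xi2 dot positive qJ} applies, giving that $\dot{\xi}_2(s)$ has one sign along the trajectory.

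Next I would quantify the rate of change of $\xi_2$. Because $|\xi|_g \approx 1$ is conserved and $|\xi_2(0)|, |\xi_2(t)|$ are $O(\lambda^{-1/3})$, at the endpoints $|\xi_1| \approx 1$; by continuity (or by integrating the bounded equation $\dot{\xi}_1 = O(1)$) over the small interval forced by $\mathrm{supp}(\widehat{\chi^2})$, we have $|\xi_1(s)| \approx 1$ throughout. The computation \eqref{g11 x2 derivative} in the proof of Lemma \ref{Lemma xi2 dot positive} shows $-\partial_{x_2} g^{11}(x)$ is bounded below in absolute value on a neighborhood of $\gamma$ (via the nonvanishing geodesic curvature hypothesis), and the Fermi-coordinate support cutoff $\chi_0(\epsilon_0^{-1}\rho(x,\gamma))$ guarantees $x(s)$ stays in this neighborhood for $|t| \lesssim \epsilon_0$. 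Hamilton's equation
\[
\dot{\xi}_2(s) = -\frac{\partial_{x_2} g^{11}(x(s))\, \xi_1^2(s)}{|\xi(s)|_g}
\]
then yields $|\dot{\xi}_2(s)| \gtrsim 1$ uniformly on the trajectory.

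Finally, the mean value theorem gives $|\xi_2(t)-\xi_2(0)| = |t||\dot{\xi}_2(\tilde{c})| \gtrsim |t|$, while the triangle inequality bounds the left side by $|\xi_2(t)|+|\xi_2(0)| \lesssim \lambda^{-1/3}$, forcing $|t| \lesssim \lambda^{-1/3}$. The contrapositive is the lemma. The only subtlety is ensuring the trajectory $x(s)$ remains inside the tubular neighborhood of $\gamma$ where the lower bound on $-\partial_{x_2} g^{11}$ holds and ensuring $|\xi_1(s)| \approx 1$ on the whole arc, but both are automatic once we restrict to the range $|t| \leq \epsilon_0$ coming from $\mathrm{supp}(\widehat{\chi^2})$ and the $\epsilon_0$-cutoffs in $q_J$.
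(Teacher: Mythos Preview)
Your proof is correct and follows essentially the same approach as the paper: extract the endpoint constraints $|\xi_2(0)|,|\xi_2(t)|\lesssim\lambda^{-1/3}$ from the support of $q_J$, rule out $\xi_1\equiv 0$, invoke the uniform lower bound $|\dot{\xi}_2|\gtrsim 1$ from the nonvanishing of $-\partial_{x_2}g^{11}$ near $\gamma$, and finish with the mean value theorem. Your treatment of the auxiliary points (why $|\xi_1(s)|\approx 1$ on the whole arc and why the trajectory stays in the tubular neighborhood) is slightly more explicit than the paper's, but the argument is the same.
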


\begin{proof}
Suppose $(z(s), \xi(s))$ is the curve such that $z(t)=x, \xi(0)=\xi$. It follows that
\begin{align*}
    d_x \varphi (t, x, \xi)=\xi(t)=(\xi_1 (t), \xi_2 (t)),\quad d_\xi \varphi(t, x, \xi)=z(0)=(z_1 (0), z_2 (0)).
\end{align*}
By construction, we have $\Tilde{a}_J(t, x, y, \xi)=0$ in Fermi coordinates unless
\begin{align*}
    \Tilde{\chi}_J\left( \lambda^{\frac{1}{3}} \frac{|\xi_2 (t)|}{|\xi (t)|_g} \right)\not=0,\quad \text{and} \quad \Tilde{\chi}_J \left(\lambda^{\frac{1}{3}} \frac{|\xi_2|}{|\xi|_g} \right)\not=0.
\end{align*}
By the support properties of $\Upsilon$, we have $|\xi|_g \approx 1$ and $|\xi(t)|_g\approx 1$, and so, we have $\Tilde{a}_J (t, x, y, \xi)=0$ unless
\begin{align*}
    |\xi_2 (t)|\lesssim \lambda^{-\frac{1}{3}},\quad |\xi_2 (0)|\lesssim \lambda^{-\frac{1}{3}}.
\end{align*}
We want to show that we cannot have $|\xi_2 (t)|\lesssim \lambda^{-\frac{1}{3}}$ when $|t|\gg \lambda^{-\frac{1}{3}}$. We note that $\xi_1 (s)\not=0$ for any small $s$. Indeed, if $|\xi_2|\lesssim \lambda^{-\frac{1}{3}}$ and $|\xi|\approx 1$, then $|\xi_1|\gtrsim 1$.

By the mean value theorem, we have
\begin{align}\label{MVT result of xi2}
    \xi_2 (t)=\xi_2 (0)+\dot{\xi}_2 (c_t) t,
\end{align}
where $c_t$ is between $0$ and $t$. Since $\widehat{\chi^2}$ is compactly supported in $[-2\epsilon_0, 2\epsilon_0]$ for small $\epsilon_0>0$ by \eqref{epsilon0 and chi}, by the proof of Lemma \ref{Lemma xi2 dot positive qJ}, there exists a $\Tilde{c}>0$ such that $|\dot{\xi}_2 (s)|\geq \Tilde{c}$. If $|\xi_2 (0)|\gg \lambda^{-\frac{1}{3}}$, then we have $\Tilde{a}_J$ vanishes automatically. If $|\xi_2 (0)|\lesssim \lambda^{-\frac{1}{3}}$ and $|t|\gg \lambda^{-\frac{1}{3}}$, then, by \eqref{MVT result of xi2} and $|\dot{\xi}_2 (s)|\geq \Tilde{c}$, we have
\begin{align*}
    |\xi_2 (t)|\geq |\dot{\xi}_2 (c_t)||t|-|\xi_2 (0)|\gg \lambda^{-\frac{1}{3}}.
\end{align*}

Hence, the amplitude $\Tilde{a}_J$ is supported where $|t|\lesssim \lambda^{-\frac{1}{3}}$.
\end{proof}

In Fermi coordinates, by Lemma \ref{Lemma Semiclassical SP QJ}, modulo $O(1)$ errors, we write
\begin{align*}
    K_J (x, y)&=\lambda^2 \iint e^{i\lambda[t+\varphi(t, x, \xi)-y\cdot \xi]} \widehat{\chi^2}(t) \Tilde{a}_J(t, x, y, \xi)\:d\xi\:dt,
\end{align*}
where, by Lemma \ref{Lemma xi2 derivative of varphi qJ}, $\Tilde{a}_J(t, x, y, \xi)$ is supported where $|t|\lesssim \lambda^{-\frac{1}{3}}$. Moreover, we have
\begin{align}\label{Size estimates of b lambda J}
    |\partial_t^k \partial_{\xi_1}^l \partial_{\xi_2}^m \Tilde{a}_J|\leq C_{k, l, m} (\lambda^{\frac{1}{3}})^m.
\end{align}
As before, here we used \eqref{Size estimates for a} and size estimates of $q_j$ and  $\kappa_t^* q_j$, since $|\xi|_g \approx 1$ by the support properties of $\Upsilon$. Note that $y_2=0$ in Fermi coordinates for $y=(y_1, y_2)\in \gamma$. As before, if we set
\begin{align*}
    L_\xi=\frac{1-i w_1 \partial_{\xi_1} }{1+|w_1|^2 },\quad w_1=\lambda (\partial_{\xi_1} \varphi(t, x, \xi)-y_1),
\end{align*}
then we have
\begin{align*}
    L_\xi (e^{i\lambda(t+\varphi(t, x, \xi)-y_1 \xi_1)})=e^{i\lambda(t+\varphi(t, x, \xi)-y_1 \xi_1)}.
\end{align*}
By Lemma \ref{Lemma varphi derivative}, we have
\begin{align*}
    |\partial_{\xi_1}^k \varphi|\lesssim |\xi_2|^2 |t|,\quad \text{for } k\geq 2,
\end{align*}
which in turn implies that
\begin{align}\label{xi1 deriv of w1}
    |\partial_{\xi_1}^k w_1|\lesssim \lambda |\xi_2|^2 |t|\lesssim \lambda (\lambda^{-\frac{1}{3}})^2 \lambda^{-\frac{1}{3}} \lesssim 1, \quad k\geq 1.
\end{align}
Integration by parts, as before, gives, for $x, y, \in \gamma$,
\begin{align*}
    \left| \int e^{i\lambda [t+\varphi(t, x, \xi)-y_1 \xi_1]} \Tilde{a}_J (t, x, y, \xi)\:d\xi \right|=\left|\int e^{i\lambda [t+\varphi(t, x, \xi)-y_1 \xi_1]} (L_\xi^T)^N (\Tilde{a}_J(t, x, y, \xi)) \:d\xi \right|,
\end{align*}
where $L_\xi^T$ is the transpose of $L_\xi$. As above, we write (up to signs)
\begin{align*}
    L_\xi^T \Tilde{a}_J=B_0+B_1+B_2,
\end{align*}
where
\begin{align*}
    B_0=\frac{1}{1+w_1^2} \Tilde{a}_J,\quad B_1=\frac{i w_1}{1+w_1^2} \partial_{\xi_1} \Tilde{a}_J,\quad B_2=\Tilde{a}_J \partial_{\xi_1} \left(\frac{i w_1}{1+w_1^2} \right).
\end{align*}
By \eqref{Size estimates of b lambda J}, we have
\begin{align}\label{bound of B0 and B1}
    |B_0|,\; |B_1|\lesssim \frac{1}{(1+w_1^2)^{\frac{1}{2}}}\lesssim \frac{1}{1+|w_1|}.
\end{align}
Since we have
\begin{align*}
    B_2=\Tilde{a}_J \partial_{w_1}\left(\frac{i w_1}{1+w_1^2} \right)\partial_{\xi_1} w_1=\Tilde{a}_J \frac{i(1-w_1^2)}{(1+w_1^2)^2} \partial_{\xi_1} w_1,
\end{align*}
it follows from \eqref{xi1 deriv of w1} that
\begin{align*}
    |B_2|\leq |\Tilde{a}_J|\frac{1+w_1^2}{(1+w_1^2)^2}|\partial_{\xi_1}w_1|\lesssim \frac{1}{1+w_1^2}\lesssim \frac{1}{(1+|w_1|)^2}.
\end{align*}
By this and \eqref{bound of B0 and B1}, we have
\begin{align*}
    |B_0|,\; |B_1|,\; |B_2|\lesssim \frac{1}{1+|w_1|}.
\end{align*}

Hence, integration by parts gives, for $x, y, \in \gamma$,
\begin{align*}
    |K_J (x, y)|\leq C_N \lambda^2 \iint_{|t|\lesssim \lambda^{-\frac{1}{3}}, |\xi_2|\lesssim \lambda^{-\frac{1}{3}}, |\xi|_g \approx 1 } |\widehat{\chi^2}(t)| (1+\lambda |\partial_{\xi_1}\varphi(t, x, \xi)-y_1|)^{-N}\:d\xi\:dt.
\end{align*}
In Fermi coordinates, we write $\gamma=\{(x_1, 0): |x_1|\leq \epsilon\}$ for $\epsilon>0$ small, and so, $x=(x_1, 0)$ and $y=(y_1, 0)$. We thus want to show that
\begin{align}\label{KJ Young's inequality}
    \int |K_J (x_1, 0, y_1, 0)|\:dx_1 \lesssim \lambda^{\frac{1}{3}},\quad \int |K_J (x_1, 0, y_1, 0)|\:dy_1 \lesssim \lambda^{\frac{1}{3}}.
\end{align}
Indeed, this and Young's inequality imply \eqref{QJ TT* claim} immediately.

We first focus on $\int |K_J (x_1, 0, y_1, 0)|\:dy_1$. We take $\Tilde{C}>0$ sufficiently large, and bound
\begin{align*}
    \int |K_J (x_1, 0, y_1, 0)|\:dy_1 &\leq C_N \lambda^2 \iiint_{|t|\lesssim \lambda^{-\frac{1}{3}}, |\xi_2|\lesssim\lambda^{-\frac{1}{3}}, |\xi|_g \approx 1 } |\widehat{\chi^2}(t)| (1+\lambda|\partial_{\xi_1} \varphi(t, x, \xi)-y_1|)^{-N}\:dy_1\:d\xi\:dt \\
    &\lesssim \lambda^2 \lambda^{-1} \lambda^{-\frac{1}{3}} \mathrm{Vol}(\{|\xi_2|\lesssim \lambda^{-\frac{1}{3}}, |\xi|\approx 1\})\lesssim \lambda^{\frac{1}{3}}.
\end{align*}
Here, we gained $\lambda^{-1}$ from $y_1$ integration, $\lambda^{-\frac{1}{3}}$ from $t$ integration due to $|t|\lesssim \lambda^{-\frac{1}{3}}$, and $\mathrm{Vol}(\{|\xi_2|\lesssim \lambda^{-\frac{1}{3}}, |\xi|\approx 1\})$ from $\xi_2$ integration. 

The proof of the second inequality in \eqref{KJ Young's inequality} is similar, but uses that $|\partial_{x_1 \xi_1}^2 \varphi (t, x, \xi)|\gtrsim 1$ for small $t$ as in the case $j\leq J-1$.

This completes the proof of Proposition \ref{Prop: QJ estimates}.

\subsection{Proof of Proposition \ref{Prop: I-Qj estimates}}\label{SS: I-Qj estimates}
As we promised before, we talk about Proposition \ref{Prop: I-Qj estimates} here. Let $\Tilde{Q}=I-\sum_{j\leq J} Q_j$. By the Fourier inversion formula, we write
\begin{align*}
    \Tilde{Q} f(x)=\int \Tilde{Q}(x, y) f(y)\:dy,
\end{align*}
where
\begin{align*}
    \Tilde{Q}(x, y)=\frac{1}{(2\pi)^2} \int e^{i(x-y)\cdot \xi} \Big(1-\sum_{j\leq J} q_j (x, y, \xi)\Big)\:d\xi.
\end{align*}
Setting
\begin{align*}
    \Tilde{q}(x, y, \xi)=1-\sum_{j\leq J} q_j (x, y, \xi),
\end{align*}
we write
\begin{align*}
    \Tilde{q}(x, y, \xi)&=1-\chi_0 (\rho(x, \gamma)) \Tilde{\chi}_0 (\rho (y, \gamma))\sum_{j\leq J} \Tilde{\chi}_j \left(2^j \frac{|\xi(N)|}{|\xi|_g} \right) \Upsilon(|\xi|_g/\lambda)\\
    &=1-\chi_0 (\rho(x, \gamma)) \Tilde{\chi}_0 (\rho(y, \gamma)) \Upsilon(|\xi|_g/\lambda).
\end{align*}
Let $\Tilde{Q}$ be a pseudodifferential operator whose kernel is $\Tilde{Q}(x, y)$. Since $\chi_0, \Tilde{\chi}_0$, and $\Upsilon$ are compactly supported bump functions, we have
\begin{align*}
    |\partial_{x, y, \xi}^\alpha \Tilde{q}(x, y, \lambda\xi)|\leq C_\alpha,
\end{align*}
and so, we can consider integration by parts below easily.

We write the kernel of $\Tilde{Q}\circ \chi(\lambda-P)$ as
\begin{align*}
    (\Tilde{Q}\circ \chi(\lambda-P))(x, y)=\frac{\lambda^4}{(2\pi)^3 } \iiiint e^{i\lambda \Psi(t, z, \eta, \xi)} \widehat{\chi}(t) \Tilde{q}(x, z, \lambda \eta) a(t, z, \lambda \xi) \:dt\:dz\:d\eta\:d\xi,
\end{align*}
where
\begin{align*}
    \Psi(t, z, \eta, \xi)=(x-z)\cdot \eta+ \varphi(t, z, \xi) -y\cdot \xi.
\end{align*}
We note that, on the support of $\Tilde{q}(x, z, \lambda \eta)$ in $\eta$,
\begin{align*}
    |\nabla_{t, z}\Psi(t, z, \eta, \xi)|&=|(\Psi_t', \Psi_z')|=\sqrt{|1-|\nabla_z \varphi(t, z, \xi)|_{g(z)}|^2+|\nabla_z \varphi(t, z, \xi)-\eta|^2} \\
    &\gtrsim |1-|\nabla_z \varphi(t, z, \xi)|_{g(z)}|+||\nabla_z \varphi(t, z, \xi)|_{g(z)}-|\eta||\geq |1-|\eta||\gtrsim 1+|\eta|.
\end{align*}
With this in mind, we first consider the integral
\begin{align}\label{Integral 1-Upsilon for 2.6}
    \frac{\lambda^4}{(2\pi)^3 } \iiiint e^{i\lambda \Psi(t, z, \eta, \xi)} \widehat{\chi}(t) \Tilde{q}(x, z, \lambda \eta) a(t, z, \lambda \xi) (1-\Upsilon(|\xi|)) \:dt\:dz\:d\eta\:d\xi.
\end{align}
On the support of $1-\Upsilon(\xi)$ in $\xi$, we have that
\begin{align*}
    |\nabla_{t, z} \Psi (t, z, \eta, \xi)|\gtrsim |1-|\nabla_z \varphi(t, z, \xi)|_{g(z)}|=|1-|\xi|_{g(\nabla_\xi \varphi(t, z, \xi))}|\approx |1-|\xi||\approx 1+|\xi|,
\end{align*}
when we choose $c_1>0$ small enough in \eqref{Construction of the compound symbol qj}. Integration by parts in $t$ and $z$ then gives us that the integral \eqref{Integral 1-Upsilon for 2.6} is dominated by
\begin{align*}
    C\lambda^4 \lambda^{-N} \iiiint_{t\in \mathrm{supp}(\widehat{\chi}), |z|\lesssim 1} (1+|\eta|)^{-N'} (1+|\xi|)^{-N'}\:dt\:dz\:d\eta\:d\xi \lesssim \lambda^{4-N},
\end{align*}
when we take $N, N'$ large enough. Using the generalized Young's inequality, this satisfies the estimates \eqref{I-Qj estimates}, and thus, we focus on the integral
\begin{align*}
    \frac{\lambda^4}{(2\pi)^3 } \iiiint e^{i\lambda \Psi(t, z, \eta, \xi)} \widehat{\chi}(t) \Tilde{q}(x, z, \lambda \eta) a(t, z, \lambda \xi) \Upsilon(|\xi|) \:dt\:dz\:d\eta\:d\xi.
\end{align*}
In this case, the amplitude of the integral is compactly supported in $\xi$, and so, we do not need to consider $|\Psi_t'|$ separately. Thus, integration by parts in $t$ and $z$, the integral is dominated by
\begin{align*}
    C\lambda^4 \lambda^{-N}\iiiint_{t\in \mathrm{supp}(\widehat{\chi}), |z|\lesssim 1, |\xi|\approx 1 } (1+|\eta|)^{-N}\:dt\:dz\:d\eta\:d\xi \lesssim \lambda^{4-N},
\end{align*}
when we take $N$ large enough, which proves Proposition \ref{Prop: I-Qj estimates}.

This shows \eqref{I-Qj estimates} by using the generalized Young's inequality, and thus, completes the proof of Theorem \ref{Theorem Universal Estimates}.

\section{Proof of Theorem \ref{Theorem Log Improvement}}\label{S:Prop for large j}
In this section, assuming nonpositive sectional curvatures on $M$, we want to prove Theorem \ref{Theorem Log Improvement}. Let
\begin{align}\label{T Definition}
    T=c_0 \log \lambda,
\end{align}
where $c_0>0$ is small but fixed, which will be specified later. Let $P=\sqrt{-\Delta_g}$ as before. As in Theorem \ref{Theorem Universal Estimates}, we would have Theorem \ref{Theorem Log Improvement}, if we could show that
\begin{align}\label{Thm2 chi reduction 1}
    \| \chi(T(\lambda-P)) f\|_{L^p (\gamma)}\leq C_p \frac{\lambda^{\frac{1}{3}-\frac{1}{3p}}}{T^{\frac{1}{2}} } \|f\|_{L^2 (M)},\quad 2\leq p<4,
\end{align}
where $C_p \to \infty$ as $p\to 4$. Since $\chi(0)=1$ and $\chi\in \mathcal{S}(\mathbb{R})$, by the mean value theorem, we have, for some $\Tilde{c}$ between $0$ and $t$,
\begin{align*}
    |(1-\chi(t)) \chi(Tt)|&=|(\chi(0)-\chi(t))\chi(Tt)| \\
    &=|\chi'(\Tilde{c})t\chi(Tt)|\leq C_N T^{-1} (1+T|t|)^{-N}.
\end{align*}
As in \cite{Sogge2017ImprovedCritical}, \cite{XiZhang2017improved}, and \cite{BlairSogge2019logarithmic}, etc., by this and the universal estimates in Theorem \ref{Theorem Universal Estimates}, we have that
\begin{align*}
    \| (I-\chi(\lambda-P)) \circ \chi(T(\lambda-P)) f\|_{L^2 (\gamma)}\lesssim \frac{\lambda^{\frac{1}{6}}}{T} \|f\|_{L^2 (M)}.
\end{align*}
Similarly, using \cite[Theorem 1]{BurqGerardTzvetkov2007restrictions} (see also \cite[Theorem 1.1]{Hu2009lp}) instead of Theorem \ref{Theorem Universal Estimates}, we have that
\begin{align*}
    \| (I-\chi(\lambda-P)) \circ \chi(T(\lambda-P)) f\|_{L^4 (\gamma)}\lesssim \frac{\lambda^{\frac{1}{4}}}{T} \|f\|_{L^2 (M)}.
\end{align*}
By interpolation, we have that
\begin{align*}
    \| (I-\chi(\lambda-P)) \circ \chi(T(\lambda-P)) f\|_{L^p (\gamma)}\lesssim \frac{\lambda^{\frac{1}{3}-\frac{1}{3p}}}{T} \|f\|_{L^2 (M)},\quad 2\leq p\leq 4.
\end{align*}
We would therefore have \eqref{Thm2 chi reduction 1} if we could show
\begin{align*}
    \| \chi(\lambda-P) \circ \chi(T(\lambda-P)) f\|_{L^p(\gamma)} \leq C_p \frac{\lambda^{\frac{1}{3}-\frac{1}{3p}}}{T^{\frac{1}{2}} } \|f\|_{L^2(M)},\quad 2\leq p<4.
\end{align*}
This follows from
\begin{align}\label{Qj chi chi T estimate}
    \sum_{j\leq J} \| Q_j \circ \chi (\lambda-P)\circ \chi(T(\lambda-P)) f\|_{L^p (\gamma)}\leq C_p \frac{\lambda^{\frac{1}{3}-\frac{1}{3p}}}{T^{\frac{1}{2}} } \|f\|_{L^2 (M)}, \quad 2\leq p<4,
\end{align}
and, for $N=1, 2, 3, \cdots$,
\begin{align}\label{I-Qj chi chi T estimate}
    \|(I-\sum_{j\leq J} Q_j) \circ \chi(\lambda-P)\circ \chi(T(\lambda-P)) f\|_{L^p (\gamma)}\lesssim \lambda^{-N} \|f\|_{L^2 (M)},\quad 2\leq p<4.
\end{align}

We first show \eqref{I-Qj chi chi T estimate}. Recall that
\begin{align}\label{chi T L2 to L2}
    \|\chi(T(\lambda-P)) f\|_{L^2 (M)}\lesssim \|f\|_{L^2 (M)}.
\end{align}
By Proposition \ref{Prop: I-Qj estimates} and \eqref{chi T L2 to L2}, we have, for $2\leq p<4$ and $N=1, 2, 3, \cdots$,
\begin{align*}
    \|(I-\sum_{j\leq J} Q_j)\circ \chi(\lambda-P)\circ \chi(T(\lambda-P)) f\|_{L^p (\gamma)} \leq C_N \lambda^{-N} \|\chi(T(\lambda-P)) f\|_{L^2 (M)}\lesssim \lambda^{-N} \|f\|_{L^2 (M)},
\end{align*}
which is better than \eqref{I-Qj chi chi T estimate}, and so, we are left to show \eqref{Qj chi chi T estimate}.

Before we proceed further, let us look at the $L^2 (M) \to L^4(\gamma)$ estimate of $Q_j \circ \chi(\lambda-P)$.

\begin{lemma}\label{Lemma L2 to L4}
For $j\leq J$, we have
\begin{align*}
    \| Q_j \circ \chi(\lambda-P) f\|_{L^4 (\gamma)}\leq C \lambda^{\frac{1}{4}} \|f\|_{L^2 (M)}. 
\end{align*}
\end{lemma}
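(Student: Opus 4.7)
The target bound $\lambda^{1/4}$ coincides with the universal Burq--G\'erard--Tzvetkov/Hu endpoint exponent $\rho_1(4,2)=\tfrac14$ at the critical index $p=4$ for $L^2(M)\to L^4(\gamma)$ in dimension two. My plan is to realize it for the localized operator $Q_j\circ\chi(\lambda-P)$ by running the $TT^*$ argument in parallel with the proofs of Proposition \ref{Prop: q plus minus estimates} and Proposition \ref{Prop: QJ estimates}, now targeting the $L^{4/3}(\gamma)\to L^4(\gamma)$ norm of $Q_j\circ\chi^2(\lambda-P)\circ Q_j^*$ with constant $\lambda^{1/2}$.

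First I would reuse the oscillatory kernel representation
\[
K_j(x,y)=\lambda^2\iint e^{i\lambda[t+\varphi(t,x,\xi)-y\cdot\xi]}\widehat{\chi^2}(t)\,\tilde{a}_j(t,x,y,\xi)\,dt\,d\xi,
\]
modulo $O(1)$ errors absorbed as in Lemma \ref{Lemma 2nd contribution small}. Here $\tilde a_j$ inherits from the proofs in \S\ref{S:Proof of universal estimates} the $j$-anisotropic support ($|\xi|_g\approx 1$ and $\xi_2$ localized in a window of scale $2^{-j}$, or $\lambda^{-1/3}$ when $j=J$). Freezing $t\in\mathrm{supp}(\widehat{\chi^2})$, the inner $\xi$-integral is the Schwartz kernel of a Fourier integral operator on $\gamma$ with phase $\Phi(x,y,\xi)=\varphi(t,x,\xi)-y\cdot\xi$. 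The decisive input is that $\Phi$ satisfies the Carleson--Sj\"olin rotational curvature condition on $\gamma\times\gamma$: the nonvanishing geodesic curvature of $\gamma$, together with the nondegeneracy $|\partial_{x_1\xi_1}^2\varphi|\gtrsim 1$ used at the end of \S\ref{SS: Proof of Proposition q+-} and the lower bound $|\dot\xi_2|\gtrsim 1$ from Lemma \ref{Lemma xi2 dot positive}, forces the mixed Hessian of $\Phi$ along $\gamma$ to be uniformly nondegenerate. This is exactly the phase condition that drives the $p=4$ endpoint estimate in Burq--G\'erard--Tzvetkov and Hu. H\"ormander's $L^2$ oscillatory integral theorem in its Carleson--Sj\"olin form for curves then yields an $L^{4/3}(\gamma)\to L^4(\gamma)$ bound of size $\lambda^{1/2}$ for each frozen $t$, uniformly in $j$ and in $t\in\mathrm{supp}(\widehat{\chi^2})$. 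Integrating against the compactly supported $\widehat{\chi^2}$ and closing the $TT^*$ loop give the stated bound.

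The main obstacle is ensuring that the Carleson--Sj\"olin constant stays uniform in $j$ despite the anisotropic shrinkage of the support of $\tilde a_j$ in the $\xi_2$-direction. The required nondegeneracies $|\partial_{x_1\xi_1}^2\varphi|\gtrsim 1$ and $|\dot\xi_2|\gtrsim 1$ are $j$-independent and already valid on the full support, so the Carleson--Sj\"olin constant does not deteriorate with $j$ and the argument closes. As a sanity check, the same proof structure would recover the $L^2(\gamma)$ endpoint already obtained in Propositions \ref{Prop: q plus minus estimates} and \ref{Prop: QJ estimates}, and the $L^\infty(\gamma)$ universal bound $\lambda^{1/2}$ via the kernel size estimate, so interpolation would also give $\lambda^{1/4}$ at $p=4$ only after the Carleson--Sj\"olin input is used at one endpoint.
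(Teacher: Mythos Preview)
Your route is genuinely different from the paper's, and it is worth spelling out what each buys. The paper does \emph{not} open up the oscillatory kernel at all for this lemma. Instead it observes that in Fermi coordinates the kernel of $\mathcal{R}_\gamma\circ Q_j$ satisfies the pointwise bound
\[
|(\mathcal{R}_\gamma\circ Q_j)(r,y)|\le C_N\,\lambda^2 2^{-j}\,(1+\lambda|r-y_1|+\lambda 2^{-j}|y_2|)^{-N},
\]
and then uses Young's inequality in $y_1$ together with Minkowski in $y_2$ to show that $\mathcal{R}_\gamma\circ Q_j$ is dominated, as an operator into $L^4(\gamma)$, by $\sup_{|c|\le\epsilon}\mathcal{R}_{\gamma_c}$, where $\gamma_c$ is the curve $\{x_2=c\}$. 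The conclusion then follows from the \emph{universal} $L^2(M)\to L^4(\gamma_c)$ bound of Burq--G\'erard--Tzvetkov/Hu applied to $\chi(\lambda-P)f$, uniformly over the family $\gamma_c$. This argument never uses the geodesic curvature of $\gamma$ (the $p=4$ universal estimate holds for any curve) and is completely uniform in $j$ for free, since only the $L^1$ kernel bound \eqref{Symbol Q properties} enters.

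Your approach instead re-derives the $L^{4/3}\to L^4$ bound for $Q_j\circ\chi^2(\lambda-P)\circ Q_j^*$ from scratch via the parametrix. Two points deserve care. First, the geodesic curvature of $\gamma$ is not the mechanism behind the $p=4$ bound; after stationary phase in $(\xi,t)$ the kernel behaves like $\lambda^{1/2}\rho(\gamma(r),\gamma(s))^{-1/2}$, and the $L^{4/3}\to L^4$ bound follows from Hardy--Littlewood--Sobolev on $\mathbb{R}$, as in \cite[Lemma 4.5]{BurqGerardTzvetkov2007restrictions}. Invoking ``Carleson--Sj\"olin rotational curvature'' here is misdirected. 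Second, and more seriously, the assertion that the oscillatory integral constant is uniform in $j$ ``because the phase nondegeneracies are $j$-independent'' ignores that the amplitude $\tilde a_j$ satisfies $|\partial_{\xi_2}^m\tilde a_j|\lesssim 2^{jm}$; oscillatory integral bounds depend on amplitude regularity, not just phase curvature. In the $L^2$ case the paper handles this via the anisotropic integration-by-parts operator $L_\xi$ (see \eqref{L transpose estimate}), and you would need an analogous device before invoking any $L^{4/3}\to L^4$ theorem. This is not fatal, but it is a real gap that your proposal leaves open, whereas the paper's reduction sidesteps it entirely.
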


It follows from \eqref{chi T L2 to L2} that
\begin{align}\label{Result of Lemma 4.1}
    \|Q_j \circ \chi(\lambda-P)\circ \chi(T(\lambda-P)) f\|_{L^4(\gamma)}\lesssim \lambda^{\frac{1}{4}} \|\chi(T(\lambda-P)) f\|_{L^2 (M)}\lesssim \lambda^{\frac{1}{4}} \|f\|_{L^2 (M)}.
\end{align}

\begin{proof}
In Fermi coordinates as above, we write, for $\epsilon>0$ small,
\begin{align*}
    \gamma=\{(r, 0): |r|\leq \epsilon \},\quad \gamma_c=\{(x_1, x_2): |x_1|\leq \epsilon,\; x_2=c \}.
\end{align*}
We first show that
\begin{align}\label{Restriction to gamma c estimate}
    \| \mathcal{R}_\gamma \circ Q_j g\|_{L^4 (\gamma)}\lesssim \sup_{|c|\leq \epsilon} \|\mathcal{R}_{\gamma_c} g \|_{L^4 (\gamma_c)},
\end{align}
where $\mathcal{R}_\gamma g$ and $\mathcal{R}_{\gamma_c} g$ are the restrictions of $g$ onto $\gamma$ and $\gamma_c$, respectively.

We can write
\begin{align*}
    (\mathcal{R}_\gamma \circ Q_j)(r, y)=\frac{1}{(2\pi)^2} \int e^{i[(r-y_1)\xi_1-y_2 \xi_2]} q_j (r, 0, \xi) \:d\xi.
\end{align*}
We may assume $|y_1|, |y_2|\leq \epsilon$ by a partition of unity if necessary. By \eqref{Symbol Q properties}, integration by parts then gives
\begin{align*}
    |(\mathcal{R}_\gamma \circ Q_j)(r, y)|&\leq C_N \lambda^2 2^{-j} (1+\lambda|r-y_1|+\lambda 2^{-j}|y_2|)^{-2N} \\
    &\leq C_N \lambda^2 2^{-j} (1+\lambda |r-y_1|)^{-N}(1+\lambda 2^{-j}|y_2|)^{-N},\quad N=1, 2, 3, \cdots.
\end{align*}
This implies that
\begin{align*}
    \int |(\mathcal{R}_\gamma\circ Q_j)(r, y_1, y_2)|\:dr,\quad \int |(\mathcal{R}_\gamma\circ Q_j)(r, y_1, y_2)|\:dy_1 \lesssim C_N \lambda 2^{-j} (1+\lambda 2^{-j} |y_2|)^{-N}.
\end{align*}
By Young's inequality, we then have that
\begin{align*}
    \| \mathcal{R}_\gamma \circ Q_j g (\cdot, y_2) \|_{L_r^4 ([-\epsilon, \epsilon])} \lesssim \lambda 2^{-j} (1+\lambda 2^{-j}|y_2|)^{-N} \|g(\cdot, y_2)\|_{L_{y_1}^4([-\epsilon, \epsilon])}.
\end{align*}
By this and Minkowski's inequality for integrals, we have that
\begin{align*}
    \|\mathcal{R}_\gamma \circ Q_j g \|_{L^4 (\gamma)}&=\left(\int \left|\int\left[\int (\mathcal{R}_\gamma \circ Q_j)(r, y_1, y_2)g(y_1, y_2)\:dy_1\right]\:dy_2 \right|^4\:dr \right)^{\frac{1}{4}} \\
    &\leq \int \| (\mathcal{R}_\gamma \circ Q_j) g(\cdot, y_2) \|_{L^4 ([-\epsilon, \epsilon])}\:dy_2 \\
    &\lesssim \lambda 2^{-j}\int (1+\lambda 2^{-j}|y_2|)^{-N} \|g(\cdot, y_2)\|_{L^4 ([-\epsilon, \epsilon])}\:dy_2 \\
    &\lesssim \sup_{|y_2|\leq \epsilon} \|g(\cdot, y_2)\|_{L^4 ([-\epsilon, \epsilon])}=\sup_{|c|\leq \epsilon} \|g(\cdot, c)\|_{L^4 ([-\epsilon, \epsilon])}=\sup_{|c|\leq \epsilon} \|\mathcal{R}_{\gamma_c} g\|_{L^4 (\gamma_c)},
\end{align*}
which proves \eqref{Restriction to gamma c estimate}.

By (the proof of) \cite[Theorem 1]{BurqGerardTzvetkov2007restrictions} and \cite[Theorem 1.1]{Hu2009lp}, we know that
\begin{align*}
    \sup_{|c|\leq \epsilon} \|\mathcal{R}_{\gamma_c} \circ \chi(\lambda-P) f\|_{L^4 (\gamma_c)} \lesssim \lambda^{\frac{1}{4}} \|f\|_{L^2 (M)}.
\end{align*}
Combining this and \eqref{Restriction to gamma c estimate} with $g=\chi(\lambda-P)f$, we obtain that
\begin{align*}
    \|\mathcal{R}_\gamma \circ Q_j \circ \chi(\lambda-P) f\|_{L^4(\gamma)} \lesssim \sup_{|c|\leq \epsilon} \|\mathcal{R}_{\gamma_c}\circ \chi(\lambda-P) f\|_{L^4 (\gamma_c)} \lesssim \lambda^{\frac{1}{4}}\|f\|_{L^2 (M)}.
\end{align*}
Here, the implicit constants are uniform, which are stable under $C^\infty$ perturbation of $\gamma$. This completes the proof.
\end{proof}

By Proposition \ref{Prop: q plus minus estimates} and Lemma \ref{Lemma L2 to L4}, we have that, for $j\leq J$,
\begin{align*}
    & \|Q_j \circ \chi(\lambda-P) f\|_{L^2 (\gamma)} \leq C 2^{\frac{j}{2}} \|f\|_{L^2 (M)}, \\
    & \|Q_j \circ \chi(\lambda-P) f\|_{L^4 (\gamma)} \leq C \lambda^{\frac{1}{4}} \|f\|_{L^2 (M)}.
\end{align*}
By interpolation, we have
\begin{align}\label{Qj chi L2 to Lp}
    \| Q_j \circ \chi(\lambda-P) f\|_{L^p (\gamma)}\leq C 2^{\frac{j}{2}(\frac{4}{p}-1)} \lambda^{\frac{1}{4}(2-\frac{4}{p})} \|f\|_{L^2 (M)}, \quad 2\leq p<4.
\end{align}
Let $\epsilon>0$ be a fixed but small number, which will be specified later. By \eqref{Qj chi L2 to Lp} and \eqref{chi T L2 to L2}, if $2\leq p<4$, then
\begin{align*}
    & \sum_{j\leq \lfloor \log_2 \lambda^{\frac{1}{3}-\epsilon}\rfloor} \|Q_j \circ \chi (\lambda-P) \circ \chi (T(\lambda-P))f \|_{L^p (\gamma)} \\
    &\hspace{100pt}\leq C \sum_{j\leq \lfloor \log_2 \lambda^{\frac{1}{3}-\epsilon}\rfloor} 2^{\frac{j}{2}(\frac{4}{p}-1)} \lambda^{\frac{1}{4}(2-\frac{4}{p})} \|\chi(T(\lambda-P)) f\|_{L^2 (M)} \\
    &\hspace{100pt}\leq \frac{2C}{1-2^{-\frac{1}{2}(\frac{4}{p}-1)}} \lambda^{\frac{1}{3}-\frac{1}{3p}-\frac{\epsilon}{2}(\frac{4}{p}-1)} \|f\|_{L^2 (M)} \\
    &\hspace{100pt}\leq \frac{2C}{1-2^{-\frac{1}{2}(\frac{4}{p}-1)}} \frac{\lambda^{\frac{1}{3}-\frac{1}{3p}}}{T^{\frac{1}{2}}}\|f\|_{L^2 (M)},
\end{align*}
which satisfies \eqref{Qj chi chi T estimate}.

\begin{remark}
We note that we cannot relax the condition $C_p \to \infty$ as $p\to 4$ in our argument. Indeed, note that
\begin{align*}
    \lim_{\lambda\to \infty} \lim_{\epsilon\to 0} \frac{\lambda^{\frac{1}{3}-\frac{1}{3p}-\frac{\epsilon}{2}(\frac{4}{p}-1)}}{\lambda^{\frac{1}{3}-\frac{1}{3p}}/T^{\frac{1}{2}} } =\lim_{\lambda\to \infty} T^{\frac{1}{2}}=\infty.
\end{align*}
Also, if we set
\begin{align*}
    C_p=\frac{2C}{1-2^{-\frac{1}{2}(\frac{4}{p}-1)}},
\end{align*}
then our argument gives
\begin{align*}
    \sum_{j\leq \lfloor \log_2 \lambda^{\frac{1}{3}-\epsilon}\rfloor} \|Q_j \circ \chi (\lambda-P) \circ \chi (T(\lambda-P))f \|_{L^p (\gamma)} \leq C_p \frac{\lambda^{\frac{1}{3}-\frac{1}{3p}}}{T^{\frac{1}{2}}} \|f\|_{L^2 (M)},
\end{align*}
but we have that $\displaystyle \lim_{p\to 4} C_p=\infty$.
\end{remark}

If we set
\begin{align}\label{mu T definition}
    \chi_T (\zeta)=\chi(\zeta/T),\quad \mu_T (\zeta)=\chi_T (\zeta) \chi(\zeta),
\end{align}
we have $\chi(\lambda-P) \chi(T(\lambda-P))=\mu_T (T(\lambda-P))$. Also, since $\widehat{\chi}_T (\zeta)=T\widehat{\chi}(T\zeta)$ and $\widehat{\mu}_T (t)=(2\pi)^{-1} \widehat{\chi}_T * \widehat{\chi} (t)$, we have, by \eqref{epsilon0 and chi},
\begin{align*}
    \mathrm{supp}(\widehat{\mu}_T) \subset \mathrm{supp}(\widehat{\chi}_T)+\mathrm{supp} (\widehat{\chi})\subset [-\frac{\epsilon_0}{T}, \frac{\epsilon_0}{T}]+[-\epsilon_0, \epsilon_0]\subset [-2\epsilon_0, 2\epsilon_0],
\end{align*}
and so,
\begin{align}\label{mu support}
    \mathrm{supp}(\widehat{\mu_T^2})\subset \mathrm{supp}(\widehat{\mu_T})+\mathrm{supp} (\widehat{\mu_T})\subset [-4\epsilon_0, 4\epsilon_0],
\end{align}
since $T=c_0 \log \lambda \gg 1$. We have shown that
\begin{align*}
    \sum_{j\leq \lfloor \log_2 \lambda^{\frac{1}{3}-\epsilon}\rfloor} \|Q_j \circ \mu_T (T(\lambda-P))f \|_{L^p (\gamma)}\leq \frac{2C}{1-2^{-\frac{1}{2}(\frac{4}{p}-1)}} \frac{\lambda^{\frac{1}{3}-\frac{1}{3p}}}{T^{\frac{1}{2}}}\|f\|_{L^2 (M)},
\end{align*}
For the rest of \eqref{Qj chi chi T estimate}, we want to show that
\begin{align*}
    \|Q_j \circ \mu_T (T(\lambda-P)) f\|_{L^p (\gamma)}\lesssim \frac{\lambda^{\frac{1}{4}}}{T^{\frac{1}{2}}} e^{C'T} (2^{-j})^{\frac{1}{p}} \|f\|_{L^2 (M)},\quad 2\leq p<4,\quad \lfloor \log_2 \lambda^{\frac{1}{3}-\epsilon}\rfloor \leq j\leq J,
\end{align*}
or
\begin{align*}
    \|Q_j \circ \mu_T (T(\lambda-P)) f\|_{L^p(\gamma)} \lesssim \frac{2^{j\left(\frac{2}{p}-\frac{1}{2}\right)} \lambda^{\frac{1}{2}-\frac{1}{p}} }{T^{\frac{1}{2}}} \|f\|_{L^2 (M)},\quad 2\leq p<4.
\end{align*}
Indeed, we have
\begin{align*}
    \sum_{\lfloor \log_2 \lambda^{\frac{1}{3}-\epsilon}\rfloor \leq j\leq J} \frac{\lambda^{\frac{1}{4}}}{T^{\frac{1}{2}}} e^{C'T} (2^{-j})^{\frac{1}{p}}\lesssim \frac{\lambda^{\frac{1}{4}-\frac{1}{3p}+\frac{\epsilon}{p}+C'c_0}}{T^{\frac{1}{2}}} \epsilon \log_2 \lambda \lesssim \frac{\lambda^{\frac{1}{3}-\frac{1}{3p}}}{T^{\frac{1}{2}}},\quad \text{when } \lambda\gg 1,
\end{align*}
and
\begin{align*}
    \sum_{\lfloor \log_2 \lambda^{\frac{1}{3}-\epsilon}\rfloor \leq j\leq J} \frac{2^{j\left(\frac{2}{p}-\frac{1}{2}\right)} \lambda^{\frac{1}{2}-\frac{1}{p}} }{T^{\frac{1}{2}}} \lesssim \frac{\lambda^{\frac{1}{3}-\frac{1}{3p}}}{T^{\frac{1}{2}}}.
\end{align*}
Here, we take $\epsilon>0$ to be sufficiently small and choose a small $c_0>0$ in \eqref{T Definition}.

By the $TT^*$ argument, we would have \eqref{Qj chi chi T estimate} if we could show either
\begin{align}\label{Log improvement TT* Claim}
    \|Q_j \circ \mu_T^2 (T(\lambda-P))\circ Q_j^* f\|_{L^p (\gamma)} \lesssim \frac{\lambda^{\frac{1}{2}}}{T} e^{CT} (2^{-j})^{\frac{2}{p}} \|f\|_{L^{p'} (\gamma)},\quad 2\leq p<4, \quad \lfloor \log_2 \lambda^{\frac{1}{3}-\epsilon}\rfloor \leq j\leq J,
\end{align}
or
\begin{align}\label{Log improvement TT* Claim 2}
    \|Q_j \circ \mu_T^2 (T(\lambda-P))\circ Q_j^* f\|_{L^p (\gamma)}\lesssim \frac{2^{j\left(\frac{4}{p}-1 \right)}\lambda^{1-\frac{2}{p}}}{T} \|f\|_{L^{p'}(\gamma) },\quad 2\leq p<4.
\end{align}

We want to lift this problem to the universal cover of $M$. Let $\Tilde{M}$ be the universal cover of $M$ with the pullback metric $\Tilde{g}$ under the covering map $p:\Tilde{M}\to M$. By the Cartan-Hadamard theorem, $\Tilde{M}$ is diffeomorphic to $\mathbb{R}^2$ with the diffeomorphism $T_{x_0} M\cong \mathbb{R}^2 \to \Tilde{M}$ for any $x_0 \in M$, so that the map $p=\mathrm{exp}_{x_0}: T_{x_0} M \to M$ is a smooth covering map. Without loss of generality, we write $p:\mathbb{R}^2\cong \Tilde{M} \to M$.

Let $D\subset \mathbb{R}^2$ be a fundamental domain of the universal covering $p$ so that every point in $\mathbb{R}^2$ is the translate of exactly one point in $D$. Without loss of generality, we may assume that $\gamma$ and other amplitudes like $q_j$ are supported in $D^\circ$, where $D^\circ$ is the interior of $D$, i.e., $\gamma\subset D^\circ$, and $\mathrm{supp}(q_j)\subset D^\circ$, etc. We write tildes over letters to express that those letters are defined in $\mathbb{R}^2\cong \Tilde{M}$. For example, for any $x\in M$, let $\Tilde{x}\in D$ be the unique point so that $p(\Tilde{x})=x$, $p(\Tilde{\gamma})=\gamma$, and the metric $\Tilde{g}$ on $\mathbb{R}^2\cong \Tilde{M}$ is the pullback metric of $g$, $\Tilde{\rho}(\Tilde{x}, \Tilde{y})$ is the Riemannian distance $d_{\Tilde{g}}(\Tilde{x}, \Tilde{y})$, and so on. Let $\Gamma$ be the group of deck transformations $\alpha$'s, which are diffeomorphisms satisfying $p\circ \alpha =p$. With this in mind, if we have a function $\Tilde{f}$ on $D$, we can extend this $\Tilde{f}$ to $\mathbb{R}^2 \cong \Tilde{M}$ by setting 
\begin{align*}
    \Tilde{f}(\Tilde{x})=\Tilde{f}(\alpha(\Tilde{x})) \quad\text{for } \Tilde{x}\in D.
\end{align*}
Here, since $p:\mathbb{R}^2 \to M$ is a local diffeomorphism, abusing notation we write
\begin{align*}
    & \Tilde{Q}_j (\Tilde{x}, \Tilde{w})=\frac{\lambda^2}{(2\pi)^2} \int e^{i\lambda (\Tilde{x}-\Tilde{w})\cdot \eta} \Tilde{q}_j (\Tilde{x}, \Tilde{w}, \lambda \eta)\:d\eta,\\
    & \Tilde{Q}_j^* (\Tilde{z}, \Tilde{y})=\overline{\Tilde{Q}_j (\Tilde{y}, \Tilde{z})}=\overline{\Tilde{Q}_j (\alpha (\Tilde{y}), \alpha (\Tilde{z}))}=\frac{\lambda^2}{(2\pi)^2} \int e^{-i\lambda (\alpha (\Tilde{y})-\alpha(\Tilde{z}))\cdot \zeta} \Tilde{q}_j (\alpha(\Tilde{y}), \alpha (\Tilde{z}), \lambda \zeta)\:d\zeta.
\end{align*}
Recall that we know from \cite{SoggeZelditch2014eigenfunction} that
\begin{align*}
    (\cos tP)(x, y)=\sum_{\alpha \in \Gamma} (\cos t \sqrt{-\Delta_{\Tilde{g}}}) (\Tilde{x}, \alpha (\Tilde{y})),\quad \Tilde{x}, \Tilde{y}\in D.
\end{align*}
Also recall that, by a counting argument and finite propagation speed as in \cite{SoggeZelditch2014eigenfunction}, there are at most $O(e^{Ct})$ many nonzero terms in the sum.

Using Euler's formula, we have, modulo $O(\lambda^{-N})$ errors,
\begin{align*}
    \chi^2 (T(\lambda-P))(x, y)&=\frac{1}{\pi T} \int e^{it\lambda} \widehat{\chi^2}(t/T) \cos(tP)(x, y)\:dt-\chi^2 (T(\lambda+P))(x, y) \\
    &=\frac{1}{\pi T}\sum_{\alpha\in \Gamma} \int e^{it\lambda} \widehat{\chi^2}(t/T) \cos(t\sqrt{-\Delta_{\Tilde{g}}})(\Tilde{x}, \alpha(\Tilde{y}))\:dt,
\end{align*}
since $\chi^2(T(\lambda+P))(x, y)=O(\lambda^{-N})$.

We want to show that the estimate for $\alpha=\mathrm{Id}$ satisfies \eqref{Log improvement TT* Claim 2}, and the estimate for $\alpha\not=\mathrm{Id}$ satisfies \eqref{Log improvement TT* Claim}.

\begin{lemma}\label{Lemma alpha=Id}
If $\alpha=\mathrm{Id}$ and $2\leq p\leq 4$, then
\begin{align*}
    \left\|\frac{1}{\pi T} \iint e^{it\lambda} \widehat{\mu_T^2}(t/T) (\Tilde{Q}_j \circ\cos(t\sqrt{-\Delta_{\Tilde{g}}})(\cdot, \alpha(\cdot)) \circ \Tilde{Q}_j^*)(\Tilde{\gamma}(\cdot), \Tilde{\gamma}(s))f(s)\:dt\:ds \right\|_{L^p (\gamma)}\lesssim \frac{2^{j\left(\frac{4}{p}-1 \right)}\lambda^{1-\frac{2}{p}}}{T} \|f\|_{L^{p'} (\gamma)},
\end{align*}
which satisfies the estimate \eqref{Log improvement TT* Claim 2}.
\end{lemma}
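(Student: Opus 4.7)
The plan is to replay the Section~\ref{S:Proof of universal estimates} argument (the proofs of Proposition~\ref{Prop: QJ estimates} and Proposition~\ref{Prop: q plus minus estimates}) essentially verbatim but lifted to the universal cover $\tilde M$, with $\widehat{\chi^2}(t)$ replaced throughout by $T^{-1}\widehat{\mu_T^2}(t/T)$. The factor $T^{-1}$ in the target bound comes directly from the $(\pi T)^{-1}$ prefactor in the Euler-formula representation of $\mu_T^2(T(\lambda-P))$. Because $\tilde M$ is simply connected with nonpositive sectional curvature, Cartan--Hadamard ensures no conjugate points, so the Lax parametrix for $e^{-it\sqrt{-\Delta_{\tilde g}}}$ extends globally and all the machinery of Section~\ref{S:Proof of universal estimates} carries over.

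First I would apply Egorov's theorem on $\tilde M$ to commute $\tilde Q_j$ past the half-wave operator and then stationary phase (analogues of Lemma~\ref{Lemma Semiclassical version of SP q+} for $j\leq J-1$ and Lemma~\ref{Lemma Semiclassical SP QJ} for $j=J$), reducing the $\alpha = \mathrm{Id}$ kernel to an oscillatory integral
\begin{align*}
  K_j^{\mathrm{Id}}(x,y) \approx \frac{\lambda^2}{\pi T} \iint e^{i\lambda(t + \varphi(t,\tilde x, \xi) - \tilde y\cdot \xi)}\, \widehat{\mu_T^2}(t/T)\, \tilde a_j(t, \tilde x, \tilde y, \xi)\, d\xi\, dt + O(1).
\end{align*}
The analogues of Lemma~\ref{Lemma xi2 with vanishing xi1}, Lemma~\ref{Lemma xi2 dot positive}, Lemma~\ref{Lemma xi2 deriv of varphi}, Lemma~\ref{Lemma varphi derivative}, and Lemma~\ref{Lemma xi2 derivative of varphi qJ} carry over because their proofs only use local Hamilton equations in Fermi coordinates around $\tilde\gamma$ and the nonvanishing geodesic curvature of $\tilde\gamma$, both of which lift from $M$ to $\tilde M$. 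Hence integration by parts in $\xi$ produces the kernel bound
\begin{align*}
 |K_j^{\mathrm{Id}}(x,y)| \leq \frac{C_N\lambda^2}{T} \iint_{|\xi_2| \approx 2^{-j},\, |\xi|_g \approx 1} |\widehat{\mu_T^2}(t/T)|\, (1 + \lambda|\partial_{\xi_1}\varphi - y_1| + \lambda 2^{-2j}|t|)^{-N}\, d\xi\, dt
\end{align*}
for $j\leq J-1$, with the analogous bound for $j = J$ replacing the $|t|$-decay by the angular-support restriction $|t|\lesssim \lambda^{-1/3}$.

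The crucial observation is that, although $\widehat{\mu_T^2}(t/T)$ is supported on the longer interval $|t|\leq 4\epsilon_0 T$, one has the uniform bound $\|\widehat{\mu_T^2}\|_{L^\infty(\mathbb{R})} \leq \|\mu_T^2\|_{L^1(\mathbb{R})} \leq \|\chi^2\|_{L^1(\mathbb{R})} = O(1)$ independent of $T$. Therefore the $t$-integral in the bound above is still controlled either by the kernel decay (yielding $(\lambda 2^{-2j})^{-1}$ for $j\leq J-1$) or by the angular-support restriction (yielding $\lambda^{-1/3}$ for $j=J$), exactly as in Section~\ref{S:Proof of universal estimates}. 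Performing the remaining $y_1$ integration (gaining $\lambda^{-1}$) and the $\xi_2$ integration (gaining $2^{-j}$ or $\lambda^{-1/3}$) yields
\begin{align*}
  \sup_{x\in\gamma}\int_\gamma |K_j^{\mathrm{Id}}(x,y)|\,dy \;\lesssim\; \frac{2^j}{T}\qquad\text{for all } j \leq J,
\end{align*}
and the symmetric estimate with $x,y$ interchanged follows from $|\partial_{x_1\xi_1}^2\varphi|\gtrsim 1$ for small $t$. Young's inequality then gives the $p=2$ case of \eqref{Log improvement TT* Claim 2}; for $p = 4$ one argues analogously using Lemma~\ref{Lemma L2 to L4} in place of Proposition~\ref{Prop: q plus minus estimates}, and interpolation covers $2\leq p \leq 4$.

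The principal technical obstacle is verifying that the expanded $t$-support $|t|\leq 4\epsilon_0 T = O(\log\lambda)$ does not disrupt the Fermi-coordinate analysis of Section~\ref{S:Proof of universal estimates}, which was tacitly restricted to $|t|\lesssim \epsilon_0$ via the support of $\widehat{\chi^2}$. This is handled because, although $\widehat{\mu_T^2}(t/T)$ has wider support, the amplitude $\tilde a_j$ is effectively supported where the Hamiltonian flow $\kappa_t$ keeps both endpoints within a tubular neighborhood of $\tilde\gamma$; the nonvanishing geodesic curvature together with the monotonicity $\dot\xi_2 \neq 0$ along the flow confine the essential contribution to a short $t$-range where the Section~\ref{S:Proof of universal estimates} estimates apply, while contributions from large $|t|$ are absorbed by the kernel decay from the $\xi$-integration by parts.
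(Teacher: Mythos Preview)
Your approach differs from the paper's and, as written, contains a gap. The paper does \emph{not} attempt to run the Section~\ref{S:Proof of universal estimates} analysis over the full time interval $|t|\leq 4\epsilon_0 T$. Instead it introduces a cutoff $\beta\in C_0^\infty(\mathbb{R})$ with $\beta(t)=1$ for $|t|\leq c$ and $\beta(t)=0$ for $|t|\geq 2c$ (with $c>0$ small relative to the injectivity radius), and writes $\widehat{\mu_T^2}(t/T)=\beta(t)\widehat{\mu_T^2}(t/T)+(1-\beta(t))\widehat{\mu_T^2}(t/T)$. On the first piece $|t|\leq 2c$ is genuinely small and $\beta(t)\widehat{\mu_T^2}(t/T)$ has uniformly bounded $t$-derivatives, so it plays exactly the role of $\widehat{\chi^2}(t)$ and the Section~\ref{S:Proof of universal estimates} machinery (and Lemma~\ref{Lemma L2 to L4} for $p=4$) applies verbatim, the extra $T^{-1}$ coming from the prefactor. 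On the second piece the paper argues directly from the singular support of the wave kernel: for $\alpha=\mathrm{Id}$ a partition of unity gives $\tilde\rho(\tilde x,\tilde y)\leq c/2$, while $1-\beta(t)$ forces $|t|\geq c>\tilde\rho(\tilde x,\tilde y)$, so $\cos(t\sqrt{-\Delta_{\tilde g}})(\tilde x,\tilde y)$ is $C^\infty$ in $(t,\tilde x,\tilde y)$ and integration by parts in $t$ yields $O(\lambda^{-N})$ with no microlocal input whatsoever.

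The gap in your argument is the circularity in the final paragraph. You invoke the kernel decay $(1+\lambda 2^{-2j}|t|)^{-N}$ to absorb large-$|t|$ contributions, but that decay was derived via Lemmas~\ref{Lemma xi2 dot positive}--\ref{Lemma varphi derivative} under the standing hypothesis $|t|\ll 1$: the monotonicity $\dot\xi_2\neq 0$, the lower bound $|\partial_{\xi_2}\varphi|\gtrsim 2^{-j}|t|$, and the Fermi-coordinate Lax parametrix itself are only established for small $t$. Likewise, saying the amplitude support ``confines the essential contribution to a short $t$-range'' presupposes Egorov and the parametrix, both of which you are applying for $|t|$ up to $O(\log\lambda)$ without justification; the short-time Egorov of Section~\ref{S:Proof of universal estimates} does not cover this, and the long-time version \eqref{Long time Egorov thm} is invoked in the paper only later, for $\alpha\neq\mathrm{Id}$. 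The paper sidesteps all of this by inserting the time cutoff $\beta$ \emph{before} any parametrix or Egorov step, so the small-$t$ piece needs only the standard short-time constructions and the large-$t$ piece is disposed of by the elementary singular-support argument.
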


\begin{proof}
We choose $\beta\in C_0^\infty (\mathbb{R})$ satisfying
\begin{align}\label{beta support in Lemma alpha Id}
    \beta(t)=1 \text{ for } |t|\leq c, \text{ and } \beta(t)=0 \text{ for } |t|\geq 2c,
\end{align}
for a small $c>0$. Since $\beta(t)\widehat{\mu_T^2}(t/T)$ is compactly supported in $t$ and
\begin{align*}
    |\partial_t^k [\beta(t) \widehat{\mu_T^2}(t/T)]| \leq C_k,
\end{align*}
the term $\beta(t)\widehat{\mu_T^2}(t/T)$ plays the same role as $\widehat{\chi^2}(t)$ in \S \ref{S:Proof of universal estimates}. Thus, by the proof of Theorem \ref{Theorem Universal Estimates}, we have, for $\alpha=\mathrm{Id}$,
\begin{align*}
    \left\|\frac{1}{\pi T}\iint e^{it\lambda} \beta(t)\widehat{\mu_T^2}(t/T) (\Tilde{Q}_j \circ\cos(t\sqrt{-\Delta_{\Tilde{g}}})(\cdot, \cdot)\circ \Tilde{Q}_j^*)(\Tilde{\gamma}(\cdot), \Tilde{\gamma}(s))f(s)\:dt\:ds \right\|_{L^2 (\gamma)} \lesssim \frac{2^j}{T}\|f\|_{L^2 (\gamma)}.
\end{align*}
The difference between this and Theorem \ref{Theorem Universal Estimates} is that here we use the Hadamard parametrix about the cosine propagator $\cos(t\sqrt{-\Delta_{\Tilde{g}}})$, and we used the Lax parametrix about $e^{-itP}$.

Similarly, instead of using Theorem \ref{Theorem Universal Estimates}, by using the proof of \eqref{Result of Lemma 4.1} with a $TT^*$ argument, we can obtain, for $\alpha=\mathrm{Id}$,
\begin{align*}
    \left\|\frac{1}{\pi T}\iint e^{it\lambda} \beta(t)\widehat{\mu_T^2}(t/T) (\Tilde{Q}_j \circ \cos(t\sqrt{-\Delta_{\Tilde{g}}}) (\cdot, \cdot) \circ \Tilde{Q}_j^*)(\Tilde{\gamma}(\cdot), \Tilde{\gamma}(s))f(s)\:dt\:ds \right\|_{L^4 (\gamma)} \lesssim \frac{\lambda^{\frac{1}{2}}}{T}\|f\|_{L^{\frac{4}{3}} (\gamma)}.
\end{align*}
The desired estimate then follows from interpolation.

It then suffices to show that, for $\alpha=\mathrm{Id}$ and $N=1, 2, 3, \cdots$,
\begin{align*}
    \left\|\frac{1}{\pi T}\iint e^{it\lambda} (1-\beta(t))\widehat{\mu_T^2}(t/T) (\Tilde{Q}_j \circ\cos(t\sqrt{-\Delta_{\Tilde{g}}})(\cdot, \cdot)\circ \Tilde{Q}_j^*)(\Tilde{\gamma}(\cdot), \Tilde{\gamma}(s)) f(s)\:dt\:ds \right\|_{L^p (\gamma)}\lesssim \lambda^{-N}\|f\|_{L^{p'} (\gamma)}.
\end{align*}
We show this as in \cite[Lemma 3.1]{ChenSogge2014few}. We first consider the kernel of the integral operator inside the $L^2$ norm without $Q_j$ and $Q_j^*$ compositions
\begin{align*}
    \frac{1}{\pi T} \int e^{it\lambda} (1-\beta(t))\widehat{\mu_T^2}(t/T) \cos(t\sqrt{-\Delta_{\Tilde{g}}})(\Tilde{x}, \Tilde{y})\:dt,\quad \Tilde{x}, \Tilde{y}\in D.
\end{align*}
We recall properties of the cosine propagator (cf. \cite[(5.14)]{BlairSogge2015OnKakeyaNikodym}, etc.)
\begin{align*}
    \mathrm{sing}\: \mathrm{supp}(\cos t\sqrt{-\Delta_{\Tilde{g}}})(\cdot, \cdot)\subset \{(\Tilde{x}, \Tilde{z})\in \mathbb{R}^2\times \mathbb{R}^2: \Tilde{\rho} (\Tilde{x}, \Tilde{z})=|t|\},
\end{align*}
that is, $\cos(t\sqrt{-\Delta_{\Tilde{g}}})(\Tilde{x}, \Tilde{z})$ is smooth if $\Tilde{\rho}(\Tilde{x}, \Tilde{z})\not=|t|$. Since $1-\beta(t)=0$ for $|t|\leq c$ where $c>0$ is as in \eqref{beta support in Lemma alpha Id}, we may assume that $|t|\geq c>0$. Here, we choose a sufficiently small $c>0$, compared to the injectivitiy radius of $M$. For $\alpha=\mathrm{Id}$, by a partition of unity if necessary, we may assume that $\Tilde{\rho}(\Tilde{x}, \alpha(\Tilde{y}))\leq c/2$ for $\Tilde{x}, \Tilde{y}\in D$, and thus,
\begin{align*}
    |t|\geq c>\Tilde{\rho}(\Tilde{x}, \alpha(\Tilde{y})),\; \alpha=\mathrm{Id}, \; \Tilde{x}, \Tilde{y}\in D,\quad \text{that is, } \Tilde{\rho}(\Tilde{x}, \alpha(\Tilde{y}))\not=|t|.
\end{align*}
This implies that $\cos(t\sqrt{-\Delta_{\Tilde{g}}})(\Tilde{x}, \Tilde{z})$ is smooth for $\Tilde{x}, \Tilde{z}\in \mathbb{R}^2$, and thus, integration by parts in $t$ implies that
\begin{align*}
    \frac{1}{\pi T}\int e^{it\lambda} (1-\beta(t))\widehat{\mu_T^2}(t/T) \cos(t\sqrt{-\Delta_{\Tilde{g}}})(\Tilde{x}, \alpha(\Tilde{y}))\:dt=O(\lambda^{-N}),\quad \alpha=\mathrm{Id},\; \Tilde{x}, \Tilde{y}\in D.
\end{align*}
For the contribution after compositions of $Q_j$ and $Q_j^*$,  by \eqref{Symbol Q properties}, we note that
\begin{align*}
    & \left|\frac{1}{\pi T} \int e^{it\lambda} (1-\beta(t))\widehat{\mu_T^2}(t/T) (\Tilde{Q}_j \circ \cos(t\sqrt{-\Delta_{\Tilde{g}}})(\cdot, \alpha(\cdot))\circ \Tilde{Q}_j^*)(\Tilde{\gamma}(r), \Tilde{\gamma}(s))\:dt\right| \\
    & \lesssim \frac{1}{T}\left| \iint \Tilde{Q}_j (\Tilde{\gamma}(r), z)\left(\int e^{it\lambda} (1-\beta(t))\widehat{\mu_T^2}(t/T) \cos (t\sqrt{-\Delta_{\Tilde{g}}}) (z, w)\:dt \right) \Tilde{Q}_j^* (w, \Tilde{\gamma}(s))\:dz\:dw \right| \\
    & \lesssim \sup_{z, w}\left(\frac{1}{T}\int e^{it\lambda} (1-\beta(t))\widehat{\mu_T^2}(t/T) \cos (t\sqrt{-\Delta_{\Tilde{g}}}) (z, w)\:dt \right) \iint |\Tilde{Q}_j (\Tilde{\gamma}(r), z)| |Q_j^* (w, \Tilde{\gamma}(s))|\:dz\:dw \\
    & \lesssim \lambda^{-N} \sup_{\Tilde{\gamma}(r)}\int|\Tilde{Q}_j (\Tilde{\gamma}(r), z)|\:dz\; \sup_{\Tilde{\gamma}(s)}\int |\Tilde{Q}_j^* (w, \gamma(s))|\:dw \lesssim \lambda^{-N}. 
\end{align*}
This completes the proof.
\end{proof}

By Lemma \ref{Lemma alpha=Id}, we can ignore the contribution of $\alpha=\mathrm{Id}$. Using Euler's formula, we know
\begin{align*}
    \mu_T^2 (T(\lambda-P))(x, y)=\frac{1}{\pi T} \int e^{it\lambda} \widehat{\mu_T^2}(t/T) (\cos tP)(x, y)\:dt -\mu_T^2 (T(\lambda+P))(x, y),
\end{align*}
and also know that $\mu_T^2 (T(\lambda+P))(x, y)=O(\lambda^{-N})$. As in \S \ref{S:Proof of universal estimates}, if we set
\begin{align*}
    K_j (x, y)=\frac{1}{2\pi T} \int e^{it\lambda} \widehat{\mu_T^2}(t/T) (Q_j \circ e^{-itP}\circ Q_j^*) (x, y)\:dt,
\end{align*}
then, by Euler's formula, modulo $O(\lambda^{-N})$ errors, we have
\begin{align*}
    K_j (x, y)=\frac{1}{\pi T} \int e^{it\lambda} \widehat{\mu_T^2}(t/T) (Q_j \circ \cos tP\circ Q_j^*)(x, y)\:dt,
\end{align*}
which is the kernel of $Q_j \circ \mu_T^2 (T(\lambda-P))\circ Q_j^*$ modulo $O(\lambda^{-N})$ errors. From now on, we focus on $\lfloor \log_2 \lambda^{\frac{1}{3}-\epsilon}\rfloor \leq j< J$. Similar arguments will also work for $j=J$.

By a version of Egorov's theorem in \cite{BouzouinaRobert2002Duke} and the subsequent observation in \cite[Theorem 4.2.4]{Anantharaman2008Entropy}, we have
\begin{align}\label{Long time Egorov thm}
    e^{-it\sqrt{-\Delta_{\Tilde{g}}}}\circ Q_j^*=\Tilde{B}_{t, j} \circ e^{-it\sqrt{-\Delta_{\Tilde{g}}} },
\end{align}
where $\Tilde{B}_{t, j}$ has a symbol
\begin{align*}
    \Tilde{b}_{t, j}=\kappa_t^* \Tilde{q}_j^* +b',
\end{align*}
with the Hamiltonian flow $\kappa_t$, and $| b'|=O(\lambda^{-1+\frac{2}{3}+2\Lambda c_0})=O(\lambda^{-\frac{1}{3}+2\Lambda c_0})$ for some fixed $\Lambda>0$. Since $M$ is compact, we have $|b'|=O(\lambda^{-\frac{1}{3}+2\Lambda c_0})=O(\lambda^{-\frac{1}{3}+\epsilon'})$ for some small $\epsilon'>0$ when taking $c_0>0$ to be sufficiently small in \eqref{T Definition} for a uniform constant $\Lambda$. As before, we will ignore the contribution of the remainder $b'$, and write $b_{t, j}=\kappa_t^* q_j$. Using Euler's formula again, we can replace $e^{-it \sqrt{-\Delta_{\Tilde{g}}}}$ by $\cos t \sqrt{-\Delta_{\Tilde{g}}}$ modulo $O(\lambda^{-N})$ errors in \eqref{Long time Egorov thm}.

With this in mind, modulo $O(\lambda^{-N})$ errors, we have that
\begin{align}\label{Kj mod errors}
    K_j (x, y)=\frac{1}{\pi T} \int e^{it\lambda}\widehat{\mu_T^2}(t/T) (Q_j\circ B_{t, j}\circ \cos tP)(x, y)\:dt,
\end{align}
where $\Tilde{B}_{t, j}$ is the lift of $B_{t, j}$. Since $p$ is a local isometry, we may assume $|\det p|=1$ in Riemannian measure. Using $w=p(\Tilde{w})$ and $z=p(\Tilde{z})$, we write
\begin{align*}
    (Q_j \circ B_{t, j} \circ \cos t\sqrt{-\Delta_g})(x, y)&=\sum_\alpha \iint_{D^2} \Tilde{Q}_j (\Tilde{x}, \Tilde{w}) \Tilde{B}_{t, j}(\Tilde{w}, \Tilde{z}) (\cos t\sqrt{-\Delta_{\Tilde{g}}})(\Tilde{z}, \alpha(\Tilde{y})) |\det p|^2 \:d\Tilde{w}\:d\Tilde{z} \\
    &=\sum_\alpha \iint_{D^2} \Tilde{Q}_j (\Tilde{x}, \Tilde{w}) \Tilde{B}_{t, j} (\Tilde{w}, \Tilde{z}) (\cos t\sqrt{-\Delta_{\Tilde{g}}}) (\Tilde{z}, \alpha (\Tilde{y}))\:d\Tilde{w}\:d\Tilde{z}.
\end{align*}

By the Hadamard parametrix (cf. \cite{Berard1977onthewaveequation}, \cite{Sogge2014hangzhou}, \cite{SoggeZelditch2014eigenfunction}, etc.), we write, for $\Tilde{x}\in D$ and $\Tilde{w}\in \alpha(D)$,
\begin{align*}
    (\cos t\sqrt{-\Delta_{\Tilde{g}}})(\Tilde{x}, \Tilde{w})=\Tilde{K}_N (t, \Tilde{x}; \Tilde{w})+ \Tilde{R}_N (t, \Tilde{x}; \Tilde{w}),
\end{align*}
where
\[
\Tilde{K}_N (t, \Tilde{x};\Tilde{w})=\begin{cases}
\sum_{\nu=0}^N u_\nu (\Tilde{x}, \Tilde{w}) \partial_t E_\nu (t, \Tilde{\rho} (\Tilde{x}, \Tilde{w})), & t\geq 0, \\
-\sum_{\nu=0}^N u_\nu (\Tilde{x}, \Tilde{w}) \partial_t E_\nu (-t, \Tilde{\rho} (\Tilde{x}, \Tilde{w})), & t< 0.
\end{cases}
\]
We explain the $u_\nu$ and $E_\nu$ below.

For simplicity, from now on, we focus on $t\geq 0$. Similar arguments work for $t\leq 0$. Here, the $C^\infty$ functions $u_\nu$ are as in \cite[\S 2 in B and (10)]{Berard1977onthewaveequation} and \cite[p.35]{Sogge2014hangzhou}:
\begin{align*}
    \begin{split}
         & u_0 (\Tilde{x}, \Tilde{w})=\Theta^{-\frac{1}{2}} (\Tilde{x}, \Tilde{w}), \\
        & u_\nu (\Tilde{x}, \Tilde{w})=\Theta^{-\frac{1}{2}} (\Tilde{x}, \Tilde{w})\int_0^1 s^{\nu-1} \Theta^{1/2} (\Tilde{x}, \Tilde{x}_s) \Delta_{\Tilde{g}, \Tilde{w}} u_{\nu-1}(\Tilde{x}, \Tilde{x}_s)\:dx,\quad \nu=1, 2, 3, \cdots, \\
        &\Theta (\Tilde{x}, \Tilde{w})=|\det D_{\exp_{\Tilde{x}}^{-1} (\Tilde{w})} \exp_{\Tilde{x}}|,
    \end{split}
\end{align*}
where $\Tilde{x}_s$ is the minimizing geodesic from $\Tilde{x}$ to $\alpha(\Tilde{w})$ parametrized by arc length and
\begin{align*}
    \Theta=(\det (\Tilde{g}_{jk}))^{\frac{1}{2}}.
\end{align*}

As in \cite[Chapter 1]{Sogge2014hangzhou}, the distributions $E_\nu$ are, in $\mathbb{R}^n$,
\begin{align*}
    E_\nu (t, x)=\lim_{\epsilon\to 0+} \nu! (2\pi)^{-n-1} \iint_{\mathbb{R}^{1+n}} e^{ix\cdot \xi+it\tau} (|\xi|^2-(\tau-i\epsilon)^2)^{-\nu-1} \:d\xi\:d\tau,\quad \nu=0, 1, 2, \cdots,
\end{align*}
and
\begin{align*}
    & E_0 (t, x)=H(t)\times (2\pi)^{-n} \int_{\mathbb{R}^n} e^{ix\cdot \xi} \frac{\sin t|\xi|}{|\xi|}\:d\xi,\\
    & \Box E_\nu=\nu E_{\nu-1},\quad -2\frac{\partial E_\nu}{\partial x}=xE_{\nu-1},\quad 2\frac{\partial E_\nu}{\partial t}=tE_{\nu-1},\quad \nu=1, 2, 3, \cdots,
\end{align*}
where $H(t)$ is the Heaviside function
\[
H(t)=\begin{cases}
1, & t\geq 0,\\
0, & t<0.
\end{cases}
\]
We have $n=2$ in our work. Here, $E_0 (t, x)$ is interpreted as
\begin{align*}
    E_0 (t, x)=\begin{cases}
    (2\pi)^{-2} \int_{\mathbb{R}^2} e^{ix\cdot \xi} \frac{\sin t |\xi|}{|\xi|}\:d\xi, & t\geq 0,\\
    0, & t<0,
\end{cases}
\end{align*}
and
\begin{align*}
    \langle E_0 (t, \cdot), f \rangle=(2\pi)^{-2}H(t)\int_{\mathbb{R}^2} \frac{\sin t|\xi|}{|\xi|}\hat{f}(\xi)\:d\xi,
\end{align*}
that is, the Fourier transform of $E_0 (t, \cdot)$ is $\frac{\sin t|\xi|}{|\xi|}$. Also, since the $E_\nu$ are radial in $x$, we may abuse notation, for example, $E_\nu (t, x)=E_\nu (t, |x|)$.

We will ignore the contribution of $\Tilde{R}_N$. We first recall a result in \cite{Berard1977onthewaveequation}, \cite[Theorem 3.1.5]{Sogge2014hangzhou}, and \cite[Proposition 3.1]{Keeler2019TwoPointWeylLaw}, adapted to our settings.

\begin{lemma}[\cite{Berard1977onthewaveequation}, \cite{Sogge2014hangzhou}, \cite{Keeler2019TwoPointWeylLaw}]\label{Lemma: remainder small}
For $|t|\leq T$, we have $\Tilde{R}_N \in C^{N-5}([-T, T]\times D\times D)$ and
\begin{align*}
    |\partial_{t, x, y}^\beta \Tilde{R}_N (t, \Tilde{x}; \Tilde{w})|\lesssim e^{C_\beta T},\quad \text{if } |\beta|\ll N.
\end{align*}
\end{lemma}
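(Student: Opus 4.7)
The plan is to follow the standard strategy of Bérard \cite{Berard1977onthewaveequation}, reformulated in Chapter 3 of \cite{Sogge2014hangzhou} and adapted in \cite{Keeler2019TwoPointWeylLaw}. First I would verify by direct computation that the transport equations which recursively define the Hadamard coefficients $u_\nu$ are chosen so that
\begin{align*}
    (\partial_t^2 - \Delta_{\Tilde{g}, \Tilde{x}}) \Big( \sum_{\nu=0}^N u_\nu(\Tilde{x}, \Tilde{w}) \partial_t E_\nu(t, \Tilde{\rho}(\Tilde{x}, \Tilde{w})) \Big) = (\Delta_{\Tilde{g}, \Tilde{x}} u_N)(\Tilde{x}, \Tilde{w}) \partial_t E_N(t, \Tilde{\rho}(\Tilde{x}, \Tilde{w})),
\end{align*}
so that the error in solving the wave equation is pushed into the $N$-th term. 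Matching with the initial conditions of the cosine propagator at $t=0$ then identifies $\Tilde{R}_N$ as the solution of the inhomogeneous wave equation with zero Cauchy data and the source above. By finite propagation speed for $\Tilde{M}$, we may restrict attention to $\Tilde{\rho}(\Tilde{x}, \Tilde{w}) \leq |t| \leq T$.

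The next step is to bound the Hadamard coefficients $u_\nu$ and their derivatives on $\Tilde{M}$. Since the sectional curvatures are pinched between $-1$ and $0$, the pullback metric on $\Tilde{M}$ enjoys the same bounds, and the Hessian/Rauch comparison (cf. \cite[Theorem 11.7]{Lee2018secondEd}) applied to the squared distance function from $\Tilde{x}$ gives $\Theta(\Tilde{x}, \Tilde{w}) \leq C e^{C \Tilde{\rho}(\Tilde{x}, \Tilde{w})}$, and hence $u_0 = \Theta^{-1/2}$ is bounded in size by $1$ and in derivatives by polynomials in $\Tilde{\rho}$ times $e^{C\Tilde{\rho}}$. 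Iterating through the recursive definition, and differentiating through the integrals along minimizing geodesics (which are unique on the Cartan--Hadamard manifold $\Tilde{M}$), one obtains, for $|\alpha|$ bounded in terms of $\nu$,
\begin{align*}
    |\partial^{\alpha}_{\Tilde{x}, \Tilde{w}} u_\nu(\Tilde{x}, \Tilde{w})| \leq C_{\nu, \alpha} e^{C_{\nu, \alpha} \Tilde{\rho}(\Tilde{x}, \Tilde{w})}.
\end{align*}
Combined with the light-cone restriction, every distance-dependent exponential is converted into a factor of $e^{CT}$ when $|t| \leq T$.

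The main obstacle, and the source of the regularity class $C^{N-5}$, is the analysis of Duhamel's formula for $\Tilde{R}_N$. In dimension $n=2$, the distributions $E_\nu(t,r)$ become more regular with growing $\nu$: by the recursion $2\partial_t E_\nu = t E_{\nu-1}$ together with the explicit form of $E_0$, one checks that $\partial_t E_N$ may be treated as a function of class roughly $C^{N-3}$ away from the diagonal, so the source $(\Delta_{\Tilde{g}, \Tilde{x}} u_N)\partial_t E_N$ has comparable regularity. Expressing
\begin{align*}
    \Tilde{R}_N(t, \Tilde{x}, \Tilde{w}) = \int_0^t \frac{\sin((t-s)\sqrt{-\Delta_{\Tilde{g}}})}{\sqrt{-\Delta_{\Tilde{g}}}} \bigl[(\Delta_{\Tilde{g},\Tilde{x}} u_N)(\cdot, \Tilde{w}) \partial_t E_N(s, \Tilde{\rho}(\cdot, \Tilde{w}))\bigr](\Tilde{x}) \, ds,
\end{align*}
and applying standard energy estimates on $\Tilde{M}$ (which, because $|\mathrm{Sect}| \leq 1$, only lose a factor $e^{C(t-s)}$ per unit time), one bounds $\Tilde{R}_N$ and a few derivatives by integrating against the source. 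Each $t$- or space-derivative of $\Tilde{R}_N$ is handled by commuting with the wave operator and absorbing an additional derivative of the source; one loses about two further derivatives going from the source to $\Tilde{R}_N$ in two dimensions, plus a couple more to pass the derivative through the distance function $\Tilde{\rho}$ near its non-smooth points. Tracking the losses carefully produces the $C^{N-5}$ regularity and the bound
\begin{align*}
    |\partial^\beta_{t, \Tilde{x}, \Tilde{w}} \Tilde{R}_N(t, \Tilde{x}, \Tilde{w})| \lesssim e^{C_\beta T}, \qquad |\beta| \ll N,
\end{align*}
as claimed. In the actual write-up I would simply quote \cite[Theorem 3.1.5]{Sogge2014hangzhou} and \cite[Proposition 3.1]{Keeler2019TwoPointWeylLaw} where these estimates are carried out in exactly this setting.
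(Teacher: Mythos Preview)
Your proposal is a correct outline of the standard B\'erard--Sogge argument, and you correctly identify at the end that one would simply cite \cite[Theorem 3.1.5]{Sogge2014hangzhou} and \cite[Proposition 3.1]{Keeler2019TwoPointWeylLaw}. The paper does exactly that: it states the lemma with attribution to \cite{Berard1977onthewaveequation}, \cite{Sogge2014hangzhou}, and \cite{Keeler2019TwoPointWeylLaw} and gives no proof of its own, treating it as a known result adapted to the present setting. So your approach matches the paper's, which is simply to quote the references.
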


Let $\Tilde{R}_N$ be the operator whose kernel is
\begin{align*}
    \frac{1}{\pi T}\int e^{it\lambda} \widehat{\mu_T^2} (t/T) \Tilde{R}_N (t, \Tilde{x};\Tilde{w})\:dt.
\end{align*}
By Lemma \ref{Lemma: remainder small}, integration by parts in $t$ gives
\begin{align*}
    \frac{1}{\pi T}\int e^{it\lambda} \widehat{\mu_T^2} (t/T) \Tilde{R}_N (t, \Tilde{x};\Tilde{w})\:dt=O(\pi^{-1} T^{-1} (2T) \lambda^{-N'} e^{C_{N'} T})=O(\lambda^{-N}),\quad N=1, 2, 3, \cdots.
\end{align*}
By \eqref{Symbol Q properties} again as in the proof of Lemma \ref{Lemma alpha=Id}, we can obtain
\begin{align*}
    \left(\Tilde{Q}_j \circ \left(\frac{1}{\pi T} \int e^{it\lambda} \widehat{\mu_T^2}(t/T)\Tilde{R}_N (t, \cdot;\cdot)\:dt \right)\circ \Tilde{Q}_j^*\right) (\Tilde{\gamma}(r), \Tilde{\gamma}(s))=O(\lambda^{-N}),\quad N=1, 2, 3, \cdots,
\end{align*}
and thus, by Young's inequality, we ignore the contribution of $\Tilde{R}_N$, when we take $N\gg 1$.

We can also ignore the contribution of $E_\nu$ for $\nu\geq 1$.

\begin{lemma}[Theorem 3.4 in \cite{Chen2015improvement}]\label{Lemma: nu positive small}
We have, for $\Tilde{x}\in D$ and $\Tilde{w}\in \alpha(D)$,
\begin{align*}
    \int e^{it\lambda} \widehat{\mu_T^2}(t/T) \partial_t E_\nu (t, \Tilde{\rho} (\Tilde{x}, \Tilde{w}))\:dt=O(\lambda^{1-2\nu}),\quad \nu=0, 1, 2, \cdots.
\end{align*}
\end{lemma}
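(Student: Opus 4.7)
The plan is to recast the integral by substituting the Fourier representation of $E_\nu$,
\[
E_\nu(t,x)=\nu!(2\pi)^{-3}\iint e^{ix\cdot\xi+it\tau}\bigl(|\xi|^2-(\tau-i0)^2\bigr)^{-\nu-1}\,d\xi\,d\tau,
\]
interchanging the order of integration, and carrying out the $t$-integral first. This yields $\int e^{i(\lambda+\tau)t}\widehat{\mu_T^2}(t/T)\,dt=2\pi T\,\mu_T^2(T(\lambda+\tau))=2\pi T\,\chi^2(\lambda+\tau)\chi^2(T(\lambda+\tau))$, which by the Schwartz decay of $\chi$ concentrates $\tau$ in an $O(1/T)$-window around $-\lambda$.

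After the change of variable $\tau=-\lambda+\omega$ with $|\omega|\lesssim 1/T$, the singular factor is, to leading order in $1/T$, equal to $(|\xi|^2-\lambda^2-i0)^{-\nu-1}$. Performing the remaining $\omega$-integration of $\mu_T^2(T\omega)$ (which contributes a factor $T^{-1}$ times a harmless constant) and collecting the $\tau\approx-\lambda$ prefactor, the problem reduces to bounding
\[
\lambda\int_{\mathbb{R}^2}e^{ix\cdot\xi}\bigl(|\xi|^2-\lambda^2-i0\bigr)^{-\nu-1}\,d\xi,\qquad r=|x|=\tilde{\rho}(\tilde{x},\tilde{w}).
\]

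To evaluate this, I would use the identity $(|\xi|^2-\lambda^2-i0)^{-\nu-1}=(\nu!)^{-1}\partial_{\lambda^2}^\nu(|\xi|^2-\lambda^2-i0)^{-1}$ combined with the classical two-dimensional outgoing Helmholtz Green-function formula
\[
\int_{\mathbb{R}^2}e^{ix\cdot\xi}(|\xi|^2-\lambda^2-i0)^{-1}\,d\xi=i\pi^{2}\,H_0^{(1)}(r\lambda),
\]
reducing the $\xi$-integral to a constant multiple of $\partial_{\lambda^2}^\nu H_0^{(1)}(r\lambda)$. For $r\lambda\lesssim 1$, the small-argument expansions of $H_0^{(1)}$ (and of $H_n^{(1)}$, which arise after $\partial_{\lambda^2}$-differentiation via $\partial_z H_0^{(1)}=-H_1^{(1)}$) yield $|\partial_{\lambda^2}^\nu H_0^{(1)}(r\lambda)|\lesssim\lambda^{-2\nu}$, giving the required $|I_\nu|\lesssim\lambda^{1-2\nu}$ directly. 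For $r\lambda\gg 1$, the large-argument Hankel asymptotic $H_0^{(1)}(z)\sim\sqrt{2/(\pi z)}\,e^{iz-i\pi/4}$ is used, and the oscillatory factor $e^{ir\lambda}$ combined with the support property $\mathrm{supp}(\widehat{\mu_T^2})\subset[-4\epsilon_0,4\epsilon_0]$ from \eqref{mu support} (which restricts $r\leq 4\epsilon_0 T$) furnishes the necessary decay.

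The main obstacle will be the large-$r\lambda$ regime: a direct size estimate of $\lambda\,\partial_{\lambda^2}^\nu H_0^{(1)}(r\lambda)$ naively produces only $O(r^{\nu-1/2}\lambda^{1/2-\nu})$, which is weaker than the claimed $O(\lambda^{1-2\nu})$, so extracting the sharp decay requires a careful stationary-phase analysis tracking the joint cancellation between the Hankel oscillation $e^{ir\lambda}$ and the narrow $\mu_T^2$-window in the $\tau$-integration, rather than using pointwise size estimates of each factor in isolation.
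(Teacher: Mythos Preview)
The paper does not prove this lemma; it is quoted verbatim from Chen \cite{Chen2015improvement} and then simply applied. So there is no in-paper argument to compare your proposal against.

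Your approach is sound and your computation is correct as far as it goes. In particular, after carrying out the $t$-integration, localising $\tau$ near $-\lambda$, and passing to the Hankel asymptotics, the bound you obtain in the regime $r\lambda\gg 1$ is genuinely
\[
\bigl|I_\nu\bigr|\;\lesssim\; r^{\,\nu-\frac12}\,\lambda^{\,\frac12-\nu},\qquad r=\tilde\rho(\tilde x,\tilde w),
\]
and no further stationary-phase manoeuvre will improve this to $O(\lambda^{1-2\nu})$ uniformly over $1\lesssim r\lesssim T$. One can see this directly from the position-space representation $\partial_t E_\nu(t,r)\sim t\,(t^2-r^2)_+^{\nu-3/2}$: after the substitution $t=r+s$ the leading singular contribution is $e^{ir\lambda}\,r^{\nu-1/2}\int_0^\infty e^{is\lambda}s^{\nu-3/2}\,ds\sim r^{\nu-1/2}\lambda^{1/2-\nu}$, and the cutoff $\widehat{\mu_T^2}(t/T)$ contributes no extra oscillatory cancellation. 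The two bounds coincide only at $r\sim\lambda^{-1}$; for $\nu\ge 1$ and $r\gtrsim 1$ your bound is the sharp one and is strictly larger than $\lambda^{1-2\nu}$.

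This does \emph{not}, however, damage the paper's argument. In the application (showing that the $\nu\ge 1$ terms of the Hadamard parametrix are negligible for \eqref{Log improvement TT* Claim}), one only needs the kernel, after the $T^{-1}$ prefactor and the sum over at most $O(e^{CT})$ deck transformations, to be dominated by $\frac{\lambda^{1/2}}{T}e^{CT}$. Your bound gives, for each $\alpha\ne\mathrm{Id}$ and each $\nu\ge 1$,
\[
\frac{1}{T}\,r^{\nu-\frac12}\lambda^{\frac12-\nu}\;\lesssim\;\frac{1}{T}\,T^{\nu-\frac12}\lambda^{\frac12-\nu},
\]
which after summing over $\alpha$ is still $e^{CT}T^{\nu-3/2}\lambda^{1/2-\nu}\ll \frac{\lambda^{1/2}}{T}e^{CT}$. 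So the obstacle you flag is a discrepancy with the quoted exponent, not with the use made of the lemma; the statement as recorded in the paper is likely a transcription of Chen's result under the implicit normalisation $r\lesssim 1$ (or $r\lesssim\lambda^{-1}$), and the weaker bound your method delivers is already enough here.
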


By the same arguments as in $\Tilde{R}_\lambda$, Lemma \ref{Lemma: nu positive small} gives us that
\begin{align*}
    \left(\Tilde{Q}_j \circ \left(\frac{1}{\pi T} \int e^{it\lambda} \widehat{\mu_T^2}(t/T) \partial_t E_\nu (t, \Tilde{\rho}(\cdot, \cdot))\:dt \right)\circ \Tilde{Q}_j^*\right) (\Tilde{\gamma}(r), \Tilde{\gamma}(s))=O(\lambda^{1-2\nu}),\quad \nu=0, 1, 2, \cdots.
\end{align*}
By Young's inequality, the contribution of this is better than \eqref{Log improvement TT* Claim} when $\nu\geq 1$, and so, we only need to consider $\nu=0$. With this in mind, we may write, modulo $O(\lambda^{-1})$ errors,
\begin{align}\label{Cosine propagator without errors}
    \begin{split}
        (\cos t\sqrt{-\Delta_{\Tilde{g}}}) (\Tilde{z}, \alpha(\Tilde{y}))&=u_0 (\Tilde{z}, \alpha(\Tilde{y}))\partial_t E_0 (t, \Tilde{\rho}(\Tilde{z}, \alpha(\Tilde{y}))) \\
        &=\frac{1}{(2\pi)^2} u_0 (\Tilde{z}, \alpha(\Tilde{y})) \int e^{i\Phi(\Tilde{z}, \alpha(\Tilde{y}))\cdot \xi} \cos (t|\xi|)\:d\xi,
    \end{split}
\end{align}
where $|\Phi(\Tilde{z}, \alpha(\Tilde{y}))|=\Tilde{\rho} (\Tilde{z}, \alpha(\Tilde{y}))$ (cf. \cite[p.4026]{Chen2015improvement}, \cite{Sogge2014hangzhou}, etc.). Using (orthogonal) coordinate changes if necessary, we may assume that
\begin{align*}
    \Phi (\Tilde{z}, \alpha(\Tilde{y}))\cdot \xi= \Tilde{z}\cdot \xi, \quad \text{in normal coordinates at } \alpha(\Tilde{y}).
\end{align*}

Modulo $O(\lambda^{-1})$ errors, it follows from \eqref{Cosine propagator without errors} that
\begin{align*}
    & (Q_j \circ B_{t, j}\circ \cos (t\sqrt{-\Delta_g}) )(x, y) \\
    &=(2\pi)^{-2} \sum_\alpha \iint_{D^2} \int \Tilde{Q}_j (\Tilde{x}, \Tilde{w}) \Tilde{B}_{t, j} (\Tilde{w}, \Tilde{z}) u_0 (\Tilde{z}, \alpha(\Tilde{y})) e^{i\Phi(\Tilde{z}, \alpha(\Tilde{y}))\cdot \xi} \cos t|\xi|\:d\xi\:d\Tilde{w}\:d\Tilde{z} \\
    &=\frac{1}{2(2\pi)^2}\sum_\alpha \sum_\pm \iiint \Tilde{Q}_j (\Tilde{x}, \Tilde{w}) \Tilde{B}_{t, j}(\Tilde{w}, \Tilde{z}) u_0 (\Tilde{z}, \alpha(\Tilde{y})) e^{i\Phi (\Tilde{z}, \alpha(\Tilde{y}))\cdot \xi \pm it|\xi|}\:d\xi\:d\Tilde{w}\:d\Tilde{z}.
\end{align*}
We now write
\begin{align*}
    & (Q_j \circ B_{t, j}\circ \cos (t\sqrt{-\Delta_g})) (x, y) \\
    &=\frac{\lambda^6}{2(2\pi)^6} \sum_{\alpha, \pm}\int e^{i\lambda[(\Tilde{x}-\Tilde{w})\cdot \eta+(\Tilde{w}-\Tilde{z})\cdot \zeta+\Phi(\Tilde{z}, \alpha(\Tilde{y}))\cdot \xi\pm t|\xi|]} \Tilde{q}_j (\Tilde{x}, \Tilde{w}, \lambda \eta) \Tilde{b}_{t, j} (\Tilde{w}, \Tilde{z}, \lambda \zeta) u_0 (\Tilde{z}, \alpha(\Tilde{y})) \:d\Tilde{w}\:d\eta\:d\Tilde{z}\:d\zeta\:d\xi.
\end{align*}
By \eqref{Kj mod errors}, modulo $O(\lambda^{-1})$ errors, we write
\begin{align*}
    K_j (x, y)=\sum_{\alpha, \pm} U_{\alpha, j, \pm} (\Tilde{x}, \Tilde{y}),
\end{align*}
where
\begin{align*}
    & U_{\alpha, j, \pm} (\Tilde{x}, \Tilde{y})=\frac{\lambda^6}{(2\pi)^7 T} \int e^{i\lambda \Tilde{\psi}_{\alpha, \pm} (t, \xi, \Tilde{w}, \eta, \Tilde{z}, \zeta)} a_j (t, \Tilde{w}, \eta, \Tilde{z}, \zeta)\:d\Tilde{w}\:d\eta\:d\Tilde{z}\:d\zeta\:d\xi\:dt, \\
    & a_j (t, \Tilde{w}, \eta, \Tilde{z}, \zeta)=a_j (\Tilde{x}, \Tilde{y}; t, \Tilde{w}, \eta, \Tilde{z}, \zeta)=\widehat{\mu_T^2}(t/T) \Tilde{q}_j (\Tilde{x}, \Tilde{w}, \lambda \eta) \Tilde{b}_{t, j} (\Tilde{w}, \Tilde{z}, \lambda \zeta) u_0 (\Tilde{z}, \alpha (\Tilde{y})), \\
    & \Tilde{\psi}_{\alpha, \pm} (t, \xi, \Tilde{w}, \eta, \Tilde{z}, \zeta)=t+(\Tilde{x}-\Tilde{w})\cdot \eta+(\Tilde{w}-\Tilde{z})\cdot \zeta+\Phi(\Tilde{z}, \alpha(\Tilde{y}))\cdot \xi\pm t|\xi|.
\end{align*}

In geodesic normal coordinates centered at $\alpha(\Tilde{y})$, using suitable orthogonal coordinate changes, we have
\begin{align*}
    \Tilde{\psi}_{\alpha, \pm} (t, \xi, \Tilde{w}, \eta, \Tilde{z}, \zeta)=t+(\Tilde{x}-\Tilde{w})\cdot \eta+(\Tilde{w}-\Tilde{z})\cdot \zeta+\Tilde{z}\cdot \xi\pm t|\xi|.
\end{align*}
By Lemma \ref{Lemma alpha=Id}, we can focus only on $\alpha\not=\mathrm{Id}$. We would then have \eqref{Log improvement TT* Claim}, if we could show that
\begin{align}\label{Log imp TT^* U reduction}
    \left\|\int U_{\alpha, j, \pm} (\Tilde{\gamma}(\cdot), \Tilde{\gamma}(s)) f(s)\:ds \right\|_p \lesssim \frac{\lambda^{\frac{1}{2}}}{T} e^{CT} (2^{-j})^{\frac{2}{p}} \|f\|_{p'},\quad \alpha\not=\mathrm{Id},\quad 2\leq p<4,\quad \lfloor \log_2 \lambda^{\frac{1}{3}-\epsilon} \rfloor \leq j\leq J.
\end{align}
We have the following analysis for $U_{\alpha, j, \pm}$.

\begin{proposition}\label{Prop: SP results in Log Improvement}
For $\alpha \not=\mathrm{Id}$ fixed, we have, modulo $O(\lambda^{-1})$ errors, that
\[
U_{\alpha, j, \pm}(\Tilde{\gamma}(r), \Tilde{\gamma}(s))=\begin{cases}
\frac{\lambda^{\frac{1}{2}}}{T}e^{i\lambda \Tilde{\rho}(\Tilde{\gamma}(r), \alpha(\Tilde{\gamma}(s)))} \Tilde{a}_{\alpha, j}(r, s), & \text{if } |d_{\Tilde{x}} \Tilde{\rho}(\Tilde{\gamma}(r), \alpha(\Tilde{\gamma}(s)))(\Tilde{N})|\approx 2^{-j} \\
&\hspace{10pt} \text{and } |d_{\Tilde{y}}\Tilde{\rho}(\Tilde{\gamma}(r), \alpha(\Tilde{\gamma}(s)))(\alpha_*(\Tilde{N}))|\approx 2^{-j}, \\
O(\lambda^{-N}), & \text{otherwise},
\end{cases}
\]
where $\lfloor \log_2 \lambda^{\frac{1}{3}-\epsilon} \rfloor \leq j\leq J$, $\Tilde{\rho}=d_{\Tilde{g}}$, and $|\Tilde{a}_{\alpha, j} (r, s)|\lesssim e^{CT}$.
\end{proposition}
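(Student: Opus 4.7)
The plan is to evaluate $U_{\alpha,j,\pm}$ by iterated stationary phase in the eleven phase variables $(t,\xi,\Tilde{w},\eta,\Tilde{z},\zeta)$, organized in two stages.

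In the first stage, I would run stationary phase in the eight variables $(\Tilde{w},\eta,\Tilde{z},\zeta)$, exactly as in Lemma \ref{Lemma Semiclassical version of SP q+}. The part of the phase depending on these variables is $(\Tilde{x}-\Tilde{w})\cdot\eta+(\Tilde{w}-\Tilde{z})\cdot\zeta$, with unique critical point $\Tilde{w}=\Tilde{z}=\Tilde{x}$, $\eta=\zeta=\xi$, Hessian of signature zero and unit absolute determinant. This composes the two pseudodifferential cutoffs, absorbs four powers of $\lambda$, and reduces $U_{\alpha,j,\pm}$, modulo $O(\lambda^{-1})$ errors that can be discarded, to
\begin{align*}
    U_{\alpha,j,\pm}(\Tilde{x},\Tilde{y})\sim \frac{\lambda^2}{T}\iint e^{i\lambda[t+\Tilde{x}\cdot\xi\pm t|\xi|]}\,\widehat{\mu_T^2}(t/T)\,\Tilde{q}_j(\Tilde{x},\lambda\xi)\,(\kappa_t^*\Tilde{q}_j)(\Tilde{x},\lambda\xi)\,u_0(\Tilde{x},\alpha(\Tilde{y}))\,dt\,d\xi,
\end{align*}
with the subleading expansion terms feeding into the lower-order part of $\Tilde{a}_{\alpha,j}$.

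In the second stage, I would apply stationary phase in the remaining three variables $(t,\xi)$. The $+$ branch satisfies $\partial_t\Tilde{\psi}_+=1+|\xi|\geq 1$ on the support of $\Upsilon$, so iterated integration by parts in $t$ gives $O(\lambda^{-N})$; only the $-$ branch contributes. Its critical equations $|\xi|=1$ and $\Tilde{x}=\pm t\,\xi$ force $|t|=|\Tilde{x}|$ and $\xi=\pm\Tilde{x}/|\Tilde{x}|$, at which the phase equals $\Tilde{x}\cdot\xi=\pm|\Tilde{x}|=\pm\Tilde{\rho}(\Tilde{\gamma}(r),\alpha(\Tilde{\gamma}(s)))$ in the chosen normal coordinates at $\alpha(\Tilde{y})$. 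A direct block-matrix computation gives Hessian determinant equal to $\pm|\Tilde{x}|=\pm\Tilde{\rho}$, nonzero precisely because $\alpha\neq\mathrm{Id}$ makes $\Tilde{\rho}>0$, so the critical point is nondegenerate. Standard stationary phase then produces a $\lambda^{-3/2}$ gain which, combined with the $\lambda^{2}$ prefactor from the first stage, yields the claimed $\lambda^{1/2}/T$.

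The two support conditions in the proposition are precisely the statement that the reduced amplitude is nonzero at the critical point. At that point $\xi$ equals the unit cotangent $d_{\Tilde{x}}\Tilde{\rho}(\Tilde{x},\alpha(\Tilde{y}))$, so the factor $\chi_1(2^j|\xi(N)|/|\xi|_g)$ inside $\Tilde{q}_j(\Tilde{x},\lambda\xi)$ is nonvanishing exactly when $|d_{\Tilde{x}}\Tilde{\rho}(\Tilde{\gamma}(r),\alpha(\Tilde{\gamma}(s)))(\Tilde{N})|\approx 2^{-j}$. The factor $(\kappa_t^*\Tilde{q}_j)(\Tilde{x},\lambda\xi)$ evaluates $\Tilde{q}_j$ at the point obtained by transporting $(\Tilde{x},\xi)$ along the Hamiltonian flow of $|\xi|_{\Tilde{g}}$ for time $t=\pm\Tilde{\rho}$, which lands at $\alpha(\Tilde{y})$ with cotangent parallel to $d_{\alpha(\Tilde{y})}\Tilde{\rho}(\Tilde{x},\alpha(\Tilde{y}))$; the analogous $\chi_1$ cutoff at that endpoint, measured against the pushforward normal $\alpha_*(\Tilde{N})$, produces the second condition. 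Outside the conjunction of these two conditions the amplitude vanishes identically, and non-stationary phase gives $O(\lambda^{-N})$.

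The amplitude bound $|\Tilde{a}_{\alpha,j}(r,s)|\lesssim e^{CT}$ would follow because the only sources of exponential growth in $T$ are the Hadamard coefficient $u_0=\Theta^{-1/2}$ and the derivatives of $\kappa_t^*\Tilde{q}_j$ that enter the higher-order corrections of the stationary phase expansion, both controlled by Jacobi-field bounds on the Cartan-Hadamard manifold $(\Tilde{M},\Tilde{g})$, i.e., by the Hessian comparison theorem under the pinched curvature hypothesis $-1\leq K_{\Tilde{g}}\leq 0$. The main obstacle I anticipate is the bookkeeping of $\xi$-derivatives during the iterated stationary phase expansion: each $\partial_{\xi_2}$ falling on $\Tilde{q}_j$ costs a factor $2^j$, which must be beaten by the $\lambda^{-1/2}$ gained per order; verifying that all remainders remain subleading in the full range $\lfloor\log_2\lambda^{1/3-\epsilon}\rfloor\leq j\leq J$ relies on the constraint $2^j\leq\lambda^{1/3}$ and on Hessian comparison to keep the higher derivatives of $\Tilde{\rho}$ and hence of $\kappa_t$ under control up to $|t|\lesssim T=c_0\log\lambda$.
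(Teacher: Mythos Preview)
Your two-stage stationary phase is correct and lands on the same leading term as the paper, but the paper organizes the computation differently: it performs a single stationary phase in all eleven variables at once, after first rescaling $(t,\xi,\Tilde{w},\eta,\Tilde{z},\zeta)\mapsto(\bar{t},\bar{\xi},\bar{w},\bar{\eta},\bar{z},\bar{\zeta})$ by appropriate powers of $\sqrt{\rho_0}$ with $\rho_0=\Tilde{\rho}(\Tilde{x},\alpha(\Tilde{y}))$. The point of the rescaling is to force the $11\times11$ Hessian to have determinant exactly $1$ and inverse with entries uniformly $O(1)$ in $\rho_0\in[c,T]$, so that H\"ormander's stationary-phase remainder applies with constants independent of $\rho_0$. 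In your organization the same issue shows up in the second stage: the $3\times3$ Hessian of $t+\Tilde{x}\cdot\xi-t|\xi|$ has determinant $\rho_0$ and entries of size up to $\rho_0$, and the higher $\xi$-derivatives of $-t|\xi|$ are also of size $\rho_0$; you would need to verify (and it is true) that the inverse Hessian nonetheless has $O(1)$ entries and that the resulting losses are only polynomial in $T$, hence swallowed by the allowed $e^{CT}$. Both routes produce the same leading amplitude $\Tilde{q}_j(\Tilde{x},\lambda d_{\Tilde{x}}\Tilde{\rho})\,\Tilde{q}_j(\alpha(\Tilde{y}),-\lambda d_{\Tilde{y}}\Tilde{\rho})\,u_0\,\widehat{\mu_T^2}(\rho_0/T)\,\rho_0^{-1/2}$, from which the two support conditions follow exactly as you argue. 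One small correction: the part of the phase depending on the eight first-stage variables must include the linear term $\Tilde{z}\cdot\xi$ (otherwise $\partial_{\Tilde{z}}$ of the phase forces $\zeta=0$, not $\zeta=\xi$); your stated critical point and reduced phase are already consistent with this, so it is only a slip in the displayed expression.
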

\begin{proof}
In normal coordinates at $\alpha(\Tilde{y})$, we write $U_{\alpha, j, \pm} (\Tilde{x}, \Tilde{y})$ as $U_{\alpha, j, \pm} (\Tilde{x})$
\begin{align*}
    U_{\alpha, j, \pm}(\Tilde{x})=\frac{\lambda^6}{(2\pi)^7 T}\int e^{i\lambda \Tilde{\psi}_{\alpha, \pm}(t, \xi, \Tilde{w}, \eta, \Tilde{z}, \zeta)} \Tilde{a}_j (t, \Tilde{w}, \eta, \Tilde{z}, \zeta)\:d\Tilde{w}\:d\eta\:d\Tilde{z}\:d\zeta\:d\xi\:dt,
\end{align*}
where $\Tilde{a}_j (t, \Tilde{w}, \eta, \Tilde{z}, \zeta)$ is the coordinate expression of $a_j (t, \Tilde{w}, \eta, \Tilde{z}, \zeta)$ in normal coordinates at $\alpha(\Tilde{y})$. Let $\Tilde{\beta}\in C_0^\infty (\mathbb{R}^2)$ be such that $\mathrm{supp}(\Tilde{\beta}) \subset \{\xi:c_2\leq |\xi|\leq c_2^{-1}\}$ for a small fixed $c_2>0$. We write
\begin{align*}
    U_{\alpha, j, \pm} (\Tilde{x})=U_{\alpha, j, \pm}^1 (\Tilde{x})+U_{\alpha, j, \pm}^2 (\Tilde{x}),
\end{align*}
where
\begin{align*}
    & U_{\alpha, j, \pm}^1 (\Tilde{x})=\frac{\lambda^6}{(2\pi)^7 T} \int e^{i\lambda \Tilde{\psi}_{\alpha, \pm} (t, \xi, \Tilde{w}, \eta, \Tilde{z}, \zeta)} \Tilde{\beta}(\xi)\Tilde{a}_j (t, \Tilde{w}, \eta, \Tilde{z}, \zeta)\:d\Tilde{w}\:d\eta\: d\Tilde{z}\:d\zeta\:d\xi\:dt, \\
    & U_{\alpha, j, \pm}^2 (\Tilde{x})=\frac{\lambda^6}{(2\pi)^7 T} \int e^{i\lambda \Tilde{\psi}_{\alpha, \pm} (t, \xi, \Tilde{w}, \eta, \Tilde{z}, \zeta)} (1-\Tilde{\beta}(\xi))\Tilde{a}_j (t, \Tilde{w}, \eta, \Tilde{z}, \zeta)\:d\Tilde{w}\:d\eta\: d\Tilde{z}\:d\zeta\:d\xi\:dt.
\end{align*}
We note that, choosing $c_2>0$ small in the support of $\Tilde{\beta}$, we have $|\partial_t \Tilde{\psi}_\alpha|=|1\pm |\xi||\gtrsim 1+|\xi|$. Thus, integrating by parts in $t$ as in the proof of Lemma \ref{Lemma alpha=Id}, we can write $U_{\alpha, j, \pm} (\Tilde{x}, \Tilde{y})$ as $U_{\alpha, j, \pm}^1 (\Tilde{x})$ in normal coordinates at $\alpha(\Tilde{y})$, and we focus on $U_{\alpha, j, \pm}^1 (\Tilde{x})$.

We will focus on the minus sign in the phase function. Indeed, if we choose the plus sign, then we have $\partial_t \Tilde{\psi}_{\alpha, +} =1+|\xi|>0$, and thus, there is no critical point of the phase function. Hence, integration by parts in $t$ again, we have $U_{\alpha, j, \pm}^1 (\Tilde{x})=O(\lambda^{-N})$. Set $\rho_0 =\Tilde{\rho} (\Tilde{x}, \alpha(\Tilde{y}))$. Since $\alpha\not=\mathrm{Id}$, we know $\rho_0>0$, and thus, can consider the following change of variables.
\begin{align}\label{Change of variables bar}
    \begin{split}
        \Bar{t}=\frac{t}{\sqrt{\rho_0}},\quad \Bar{w}=\frac{\Tilde{w}}{\sqrt{\rho_0}},\quad \Bar{\xi}=\sqrt{\rho_0}\xi,\quad \Bar{\eta}=\sqrt{\rho_0}\eta,\quad \Bar{z}=\frac{\Tilde{z}}{\sqrt{\rho_0}}, \quad \Bar{x}=\frac{\Tilde{x}}{\sqrt{\rho_0}},\quad \Bar{\zeta}=\sqrt{\rho_0} \zeta.
    \end{split}
\end{align}
This implies that
\begin{align*}
    d\Tilde{w}\:d\eta\:d\Tilde{z}\:d\zeta\:d\xi\:dt=\frac{1}{\sqrt{\rho_0}} d\Bar{w}\:d\Bar{\eta}\:d\Bar{z}\:d\Bar{\zeta}\:d\Bar{\xi}\:d\Bar{t}.
\end{align*}
Since we choose the minus sign in the phase function, we set
\begin{align*}
    \Tilde{\psi}_{\alpha, -} (t, \xi, \Tilde{w}, \eta, \Tilde{z}, \zeta)=\sqrt{\rho_0} \Bar{t}-\Bar{t}|\Bar{\xi}|+(\Bar{x}-\Bar{w})\cdot \Bar{\eta}+(\Bar{w}-\Bar{z})\cdot \Bar{\zeta}+\Bar{z}\cdot \Bar{\xi}=:\Bar{\psi}(\Bar{t}, \Bar{\xi}, \Bar{w}, \Bar{\eta}, \Bar{z}, \Bar{\zeta}).
\end{align*}
Note that
\begin{align*}
    \nabla \Bar{\psi}&=(\partial_{\Bar{t}}\Bar{\psi}, \partial_{\Bar{\xi}} \Bar{\psi}, \partial_{\Bar{w}} \Bar{\psi}, \partial_\eta \Bar{\psi}, \partial_{\Bar{z}} \Bar{\psi}, \partial_{\Bar{\zeta}} \Bar{\psi} ) \\
    &=(\sqrt{\rho_0}- |\Bar{\xi}|, \Bar{z}- \Bar{t} \frac{\Bar{\xi}}{|\Bar{\xi}|}, \Bar{\zeta}-\Bar{\eta}, \Bar{x}-\Bar{w}, -\Bar{\zeta}+\Bar{\xi}, \Bar{w}-\Bar{z} ),
\end{align*}
and thus, the critical point satisfies
\begin{align}\label{Stationary point condition}
    \begin{split}
        \sqrt{\rho_0} = |\Bar{\xi}|,\quad \Bar{z}=\Bar{t}\frac{\Bar{\xi}}{|\Bar{\xi}|},\quad \Bar{\zeta}=\Bar{\eta}, \quad \Bar{x}=\Bar{w},\quad \Bar{\zeta}=\Bar{\xi}, \quad \Bar{w}=\Bar{z}.
    \end{split}
\end{align}
The Hessian $\partial^2 \Bar{\psi}$ is
\[
    \partial^2 \Bar{\psi} =\begin{pmatrix}
    O_{1\times 1} & \left(-\frac{\Bar{\xi}^T}{|\Bar{\xi}|} \right)_{1\times 2} & O_{1\times 2} & O_{1\times 2} & O_{1\times 2} & O_{1\times 2} \\
    \left(-\frac{\Bar{\xi}}{|\Bar{\xi}|} \right)_{2\times 1} & A_{2\times 2} & O_{2\times 2} & O_{2\times 2} & I_{2\times 2} & O_{2\times 2} \\
    O_{2\times 1} & O_{2\times 2} & O_{2\times 2} & -I_{2\times 2} & O_{2\times 2} & I_{2\times 2} \\
    O_{2\times 1} & O_{2\times 2} & -I_{2\times 2} & O_{2\times 2} & O_{2\times 2} & O_{2\times 2} \\
    O_{2\times 1} & I_{2\times 2} & O_{2\times 2} & O_{2\times 2} & O_{2\times 2} & -I_{2\times 2} \\
    O_{2\times 1} & O_{2\times 2} & I_{2\times 2} & O_{2\times 2} & -I_{2\times 2} & O_{2\times 2}
    \end{pmatrix}=:\begin{pmatrix}
    B_{7\times 7} & C_{7\times 4} \\
    (C^T)_{4\times 7} & D_{4\times 4}
    \end{pmatrix},
  \]
where $A_{2\times 2}=\Bar{\psi}_{\Bar{\xi}\Bar{\xi}}''$. By properties of determinants for block matrices (cf. \cite{Powell2011DetOfBlockMatrices}, \cite{Silvester2000Determinants}, etc.), we have, at the critical point,
\begin{align*}
    \det(\partial^2\Bar{\psi})=\det(B-CD^{-1}C^T)\det D,
\end{align*}
provided the matrix $D$ is invertible. Since $\det D=1$, by properties of block matrix determinants again, we have, at the critical point,
\begin{align*}
    \det(\partial^2 \Bar{\psi})=\det (B-CD^{-1}C^T) &=\det \begin{pmatrix}
    O_{1\times 1} & \left(-\frac{\Bar{\xi}^T}{\sqrt{\rho_0}} \right)_{1\times 2} & O_{1\times 2} & O_{1\times 2} \\
    \left(-\frac{\Bar{\xi}}{\sqrt{\rho_0}} \right)_{2\times 1} & A_{2\times 2} & I_{2\times 2} & O_{2\times 2} \\
    O_{2\times 1} & I_{2\times 2} & O_{2\times 2} & -I_{2\times 2} \\
    O_{2\times 1} & O_{2\times 2} & -I_{2\times 2} & O_{2\times 2}
    \end{pmatrix}\\
    &=\det \begin{pmatrix}
    O_{1\times 1} & \left(-\frac{\Bar{\xi}^T}{\sqrt{\rho_0}} \right)_{1\times 2} & O_{1\times 2} & O_{1\times 2} \\
    \left(-\frac{\Bar{\xi}}{\sqrt{\rho_0}} \right)_{2\times 1} & A_{2\times 2} & O_{2\times 2} & O_{2\times 2} \\
    O_{2\times 1} & O_{2\times 2} & O_{2\times 2} & I_{2\times 2} \\
    O_{2\times 1} & O_{2\times 2} & I_{2\times 2} & O_{2\times 2}
    \end{pmatrix} \\
    &=\det\begin{pmatrix}
    0 & -\frac{\Bar{\xi}_1}{\sqrt{\rho_0}} & -\frac{\Bar{\xi}_2}{\sqrt{\rho_0}} \\
    -\frac{\Bar{\xi}_1}{\sqrt{\rho}_0} & -\frac{\Bar{t}}{\sqrt{\rho_0}^3} \Bar{\xi_2}^2 & \frac{\Bar{t}}{\sqrt{\rho_0}^3} \Bar{\xi}_1 \Bar{\xi}_2 \\
    -\frac{\Bar{\xi}_2}{\sqrt{\rho_0}} & \frac{\Bar{t}}{\sqrt{\rho_0}^3} \Bar{\xi}_1 \Bar{\xi}_2 & -\frac{\Bar{t}}{\sqrt{\rho_0}^3} \Bar{\xi}_1^2
    \end{pmatrix}=\frac{\Bar{t} |\Bar{\xi}|^4 }{\rho_0^{\frac{5}{2}}} = 1.
\end{align*}
In the last equality, we used $\Bar{t}, |\Bar{\xi}|= \sqrt{\rho_0}$ at the critical point, since, by \eqref{Change of variables bar} and \eqref{Stationary point condition}, we have that, for $\Bar{t}>0$,
\begin{align*}
    |\Bar{\xi}|= \sqrt{\rho_0}, \quad \Bar{t}=|\Bar{t}|=|\Bar{z}|=|\Bar{w}|=|\Bar{x}|=\frac{1}{\sqrt{\rho_0}}|\Tilde{x}|=\sqrt{\rho_0}.
\end{align*}
This gives us that $|\det \partial^2 \Bar{\psi}|=1$ at the critical point.

\begin{remark}
Since we have shown $\det (\partial^2 \Bar{\psi})=1$, we have
\begin{align*}
    (\partial^2 \Bar{\psi})^{-1}=\frac{1}{\det (\partial^2 \Bar{\psi})}\mathrm{adj}(\partial^2 \Bar{\psi})=\mathrm{adj}(\partial^2 \Bar{\psi}).
\end{align*}
Each entry of the adjugate $\mathrm{adj}(\partial^2 \Bar{\psi})$ is a finite linear combination of multiplications of terms of the form
\begin{align*}
    1,\; \frac{\Bar{\xi}_1}{\sqrt{\rho_0}},\; \frac{\Bar{\xi}_2}{\sqrt{\rho_0}},\; \frac{\Bar{t}}{\sqrt{\rho_0}^3}\Bar{\xi}_1^2,\; \frac{\Bar{t}}{\sqrt{\rho_0}^3}\Bar{\xi}_2^2,\; \frac{\Bar{t}}{\sqrt{\rho_0}^3} \Bar{\xi}_1 \Bar{\xi}_2.
\end{align*}
These are all $O(1)$ near the critical point, since we have $|\Bar{t}|, |\Bar{\xi}|\approx \sqrt{\rho_0}$. This implies that the matrix norm of $\partial^2 \Bar{\psi}$ is $O(1)$, and thus, we can use the method of stationary phase below easily.
\end{remark}

Continuing with our proof, in the normal coordinates at $\alpha(\Tilde{y})$, by the stationary phase argument, we have, modulo $O(\lambda^{-1})$ errors, at the critical point,
\begin{align*}
    U_{\alpha, j, -}^1 (\Tilde{x})=\frac{\lambda^6}{(2\pi)^7 \sqrt{\rho_0} T } \bigg[ \left(\frac{\lambda}{2\pi} \right)^{-\frac{11}{2}} e^{i\lambda \sqrt{\rho_0}|\Bar{x}|} 
    e^{\frac{i\pi}{4}\mathrm{sgn}(\partial^2 \Bar{\psi})} \sum_{l<l_0} \lambda^{-l} L_l a_0 + O\left( \lambda^{-l_0} \sum_{|\beta|\leq 2 l_0} \sup|D^\beta a_0| \right) \bigg],
\end{align*}
where $a_0$ is defined by
\begin{align*}
    a_0 (\Bar{t}, \Bar{\xi}, \Bar{w}, \Bar{\eta}, \Bar{z}, \Bar{\zeta})=\Tilde{\beta}\left(\frac{\Bar{\xi}}{\sqrt{\rho_0}} \right)\widehat{\mu_T^2}\left(\frac{\sqrt{\rho_0}\Bar{t}}{T} \right) \Tilde{q}_j \left(\sqrt{\rho_0}\Bar{x}, \sqrt{\rho_0}\Bar{w}, \lambda \frac{\Bar{\eta}}{\sqrt{\rho_0}}\right) \Tilde{b}_{\sqrt{\rho_0}\Bar{t}, j}\left(\sqrt{\rho_0}\Bar{w}, \sqrt{\rho_0} \Bar{x}, \lambda \frac{\Bar{\eta}}{\sqrt{\rho_0}}\right) u_0 (\sqrt{\rho_0}\Bar{z}),
\end{align*}
$u_0 (\sqrt{\rho_0}\Bar{z})$ is the coordinate expression of $u_0 (\sqrt{\rho_0}\Bar{z}, \alpha(\Tilde{y}))$ in normal coordinates at $\alpha(\Tilde{y})$, and the $L_l$ are the differential operators of order at most $2l$ acting on $a_0$ at the critical point. Recall that we can easily control the size estimates of $\Tilde{q}_j$ by construction, and the size estimates of $u_0$ by \cite[Lemma B.1]{Keeler2019TwoPointWeylLaw}. Also, by \cite{BouzouinaRobert2002Duke} and/or \cite[Lemma 11.11]{Zworski2012Semiclassical}, the size estimtaes for $\kappa_t^* q_j^*$ are the same as those for $q_j$, up to $e^{CT}$. Thus, the remainder is
\begin{align*}
    O\left( \lambda^{-l_0} \sum_{|\beta|\leq 2 l_0} \sup|D^\beta a_0| \right)=O(\lambda^{-l_0} (\lambda^{\frac{1}{3}})^{2 l_0} e^{CT})=O(\lambda^{-\frac{l_0}{3}} e^{CT}).
\end{align*}
Taking $l_0$ large enough, we can ignore the contribution of the remainder.

As before, by \eqref{Change of variables bar} and \eqref{Stationary point condition}, at the critical point, we have that
\begin{align*}
    \Bar{t}=|\Bar{z}|=|\Bar{w}|=|\Bar{x}|=\frac{1}{\sqrt{\rho_0}}|\Tilde{x}|,\quad |\Bar{\xi}|=\sqrt{\rho_0},\quad \Bar{\xi}=\frac{|\Bar{\xi}|}{\Bar{t}}\Bar{z},\quad \Bar{z}=\Bar{w}=\Bar{x}.
\end{align*}
This gives us that, in the geodesic normal coordinates,
\begin{align*}
    \Bar{\xi}=\frac{|\Bar{\xi}|}{\Bar{t}} \Bar{z}=\frac{\sqrt{\rho_0}}{|\Bar{x}|} \Bar{x}=\sqrt{\rho_0} \frac{\Tilde{x}}{|\Tilde{x}|}=\sqrt{\rho_0}\frac{d_{\Tilde{x}} \Tilde{\rho} (\Tilde{x}, \alpha(\Tilde{y}))}{|d_{\Tilde{x}} \Tilde{\rho} (\Tilde{x}, \alpha(\Tilde{y}))|_{\Tilde{g}} }.
\end{align*}

We then have, modulo $O(\lambda^{-1})$ errors, that
\begin{align*}
    U_{\alpha, j, -}^1 (\Tilde{x}, \Tilde{y})&=\frac{\lambda^{\frac{1}{2}}}{2\pi\sqrt{2\pi}T} \sum_{\alpha \not=\mathrm{Id}} \frac{1}{\sqrt{\rho_0}} e^{i\lambda \Tilde{\rho} (\Tilde{x}, \alpha(\Tilde{y}))} e^{\frac{i\pi}{4}\mathrm{sgn}(\partial^2\Bar{\psi})} \\
    & \hspace{5pt} \times \sum_{l<l_0} \lambda^{-l} L_l a_0 \bigg(\frac{|\Tilde{x}|}{\sqrt{\rho_0}}, \sqrt{\rho_0} d_{\Tilde{x}} \Tilde{\rho} (\Tilde{x}, \alpha(\Tilde{y})), \frac{\Tilde{x}}{\sqrt{\rho_0}}, \sqrt{\rho_0} d_{\Tilde{x}} \Tilde{\rho} (\Tilde{x}, \alpha(\Tilde{y})), \frac{\Tilde{x}}{\sqrt{\rho_0}}, \sqrt{\rho_0} d_{\Tilde{x}} \Tilde{\rho} (\Tilde{x}, \alpha(\Tilde{y})) \bigg) \\
    &=\frac{\lambda^{\frac{1}{2}}}{2\pi\sqrt{2\pi}T} \sum_{\alpha \not=\mathrm{Id}} \frac{1}{\sqrt{\rho_0}} e^{i\lambda \Tilde{\rho} (\Tilde{x}, \alpha(\Tilde{y}))} e^{\frac{i\pi}{4}\mathrm{sgn}(\partial^2\Bar{\psi})} \\
    & \hspace{5pt} \times \sum_{l<l_0} \lambda^{-l} L_l \bigg( \widehat{\mu_T^2}\left(|\Tilde{x}|/T \right) u_0 (\Tilde{x}, \alpha(\Tilde{y})) \Tilde{b}_{|\Tilde{x}|, j}(\Tilde{x}, \Tilde{x}, \lambda d_{\Tilde{x}} \Tilde{\rho} (\Tilde{x}, \alpha(\Tilde{y})) ) \Tilde{q}_j (\Tilde{x}, \Tilde{x}, \lambda d_{\Tilde{x}} \Tilde{\rho} (\Tilde{x}, \alpha(\Tilde{y})) ) \bigg).
\end{align*}
By the discussion in \S \ref{SS: Notation for PDOs} and the properties of the geodesic flow, we can write
\begin{align*}
    \Tilde{b}_{|\Tilde{x}|, j}(\Tilde{x}, \Tilde{x}, \lambda d_{\Tilde{x}} \Tilde{\rho} (\Tilde{x}, \alpha(\Tilde{y})) )&=\Tilde{q}_j (\kappa_{|\Tilde{x}|} (\Tilde{x}, \lambda d_{\Tilde{x}} \Tilde{\rho} (\Tilde{x}, \alpha(\Tilde{y})) )) \\
    &=\Tilde{q}_j (\alpha(\Tilde{y}), \alpha(\Tilde{y}), -\lambda d_{\Tilde{y}} \Tilde{\rho} (\Tilde{x}, \alpha(\Tilde{y})) ).
\end{align*}

Hence, modulo $O(\lambda^{-1})$ errors, for $\alpha\not=\mathrm{Id}$,
\begin{align*}
    U_{\alpha, j, -}^1 (\Tilde{x}, \Tilde{y})&=\frac{\lambda^{\frac{1}{2}}}{2\pi\sqrt{2\pi}T} e^{i\lambda \Tilde{\rho} (\Tilde{x}, \alpha(\Tilde{y})) } a_j (\Tilde{x}, \alpha(\Tilde{y})),
\end{align*}
where
\begin{align*}
    a_j (\Tilde{x}, \alpha(\Tilde{y}))=\sum_{l<l_0} \frac{1}{(\Tilde{\rho}(\Tilde{x}, \alpha(\Tilde{y})))^{\frac{1}{2}}} \lambda^{-l} L_l \bigg( \widehat{\mu_T^2}(\Tilde{\rho} (\Tilde{x}, \alpha(\Tilde{y}))/T) u_0 (\Tilde{x}, \alpha(\Tilde{y})) \Tilde{q}_j (\alpha(\Tilde{y}), \alpha(\Tilde{y}), -\lambda d_{\Tilde{y}} \Tilde{\rho}(\Tilde{x}, \alpha(\Tilde{y})))\\
    \times \Tilde{q}_j (\Tilde{x}, \Tilde{x}, \lambda d_{\Tilde{x}}\Tilde{\rho} (\Tilde{x}, \alpha(\Tilde{y})) ) \bigg).
\end{align*}
Since we have $\Tilde{\rho} (\Tilde{x}, \alpha(\Tilde{y})) \gtrsim 1$ for $\alpha\not=\mathrm{Id}$, we have, by construction,
\begin{align*}
    |a_j (\Tilde{x}, \alpha(\Tilde{y}))| \leq e^{CT}.
\end{align*}

Recall that the $\xi$-support of $q_j (x, y, \xi)$ is $\frac{|\xi(N)|}{|\xi|_g} \approx 2^{-j}$ and $\xi (N)=\langle \xi^{\#}, N \rangle_{\Tilde{g}}$. Note that, in geodesic normal coordinates centered at $\alpha(\Tilde{y})$, we have, for $\Tilde{\rho}(\Tilde{x}, \Tilde{z})$,
\begin{align*}
    |d_{\Tilde{x}} \Tilde{\rho}(\Tilde{x}, \alpha(\Tilde{y}))|_{\Tilde{g}}= 1 = |d_{\Tilde{z}} \Tilde{\rho}(\Tilde{x}, \alpha(\Tilde{y}))|_{\Tilde{g}}.
\end{align*}
By the support properties of $\Tilde{q}_j (\Tilde{x}, \Tilde{x}, \lambda d_{\Tilde{x}} \Tilde{\rho}(\Tilde{x}, \alpha(\Tilde{y})))$, $a_j (\Tilde{x}, \alpha(\Tilde{y}))$ is supported where
\begin{align*}
    |d_{\Tilde{x}}\Tilde{\rho}(\Tilde{x}, \alpha(\Tilde{y}))(\Tilde{N})| \approx 2^{-j}.
\end{align*}
Here, $\Tilde{N}$ is a unit normal vector to $\Tilde{\gamma}$. We also observe that $\alpha_* (\Tilde{N})$ is normal to $\alpha \circ \Tilde{\gamma}$. By the support properties of
\begin{align*}
    \Tilde{q}_j (\alpha(\Tilde{y}), \alpha(\Tilde{y}), -\lambda d_{\Tilde{y}} \Tilde{\rho}(\Tilde{x}, \alpha(\Tilde{y}))),
\end{align*}
for each $\alpha \not=\mathrm{Id}$, if $\Tilde{y}=\alpha\circ \Tilde{\gamma}(s)$ for $|s|\ll 1$, then $a_j (\Tilde{x}, \alpha(\Tilde{y}))$ is supported where
\begin{align*}
    |d_{\Tilde{y}}\Tilde{\rho}(\Tilde{x}, \alpha(\Tilde{y}))(\alpha_*(\Tilde{N}))|=|d_{\Tilde{y}} \Tilde{\rho}(\Tilde{x}, \alpha \circ \Tilde{\gamma}(s)) (\alpha_* (\Tilde{N}))| \approx 2^{-j}.
\end{align*}
This completes the proof.
\end{proof}

We next consider the support properties of the amplitude of $U_{\alpha, j, \pm}$. Let $\Tilde{\rho}_\alpha (\Tilde{x}, \Tilde{y})=\Tilde{\rho}(\Tilde{x}, \alpha(\Tilde{y}))$ for $\alpha\not=\mathrm{Id}$. Fix $r_0, s_0 \in [0, 1]$ so that
\begin{align*}
    |d_{\Tilde{x}} \Tilde{\rho}_\alpha (\Tilde{\gamma}(r_0), \Tilde{\gamma}(s_0))(\Tilde{N})|\approx 2^{-j} \quad \text{and} \quad |d_{\Tilde{y}}\Tilde{\rho}_\alpha(\Tilde{\gamma}(r_0), \Tilde{\gamma}(s_0))(\alpha_* (\Tilde{N}))|\approx 2^{-j}.
\end{align*}
We can assume such $r_0$ and $s_0$ exist, or otherwise, by the above proposition, we have $U_{\alpha, j, \pm}=O(\lambda^{-N})$ for any $N=1, 2, 3, \cdots$. Using a partition of unity, we may assume that
\begin{align*}
    |d_{\Tilde{x}} \Tilde{\rho}_\alpha (\Tilde{\gamma}(r), \Tilde{\gamma}(s))(\Tilde{N})|\approx 2^{-j} \quad \text{and} \quad |d_{\Tilde{y}}\Tilde{\rho}_\alpha(\Tilde{\gamma}(r), \Tilde{\gamma}(s))(\alpha_* (\Tilde{N}))|\approx 2^{-j}.
\end{align*}
happens only near $(r_0, s_0)$. By Proposition \ref{Prop: SP results in Log Improvement}, we may assume that $U_{\alpha, j, \pm} (\Tilde{\gamma}(r), \Tilde{\gamma}(s))$ is supported where
\begin{align*}
    |d_{\Tilde{x}} \Tilde{\rho}_\alpha (\Tilde{\gamma}(r), \Tilde{\gamma}(s))(\Tilde{N})|,\quad |d_{\Tilde{y}} \Tilde{\rho}_\alpha (\Tilde{\gamma}(r), \Tilde{\gamma}(s))(\alpha_*(\Tilde{N}))| \in [2^{-j-1}, 2^{-j+1}].
\end{align*}
Suppose $r, s\in [0, \epsilon_1]=I$ for some small $\epsilon_1>0$, and write $I=\cup_k I_k$, where $\{I_k\}_k$ is a collection of almost disjoint intervals with $|I_k|\approx e^{-CT}$ for some large $C>0$. Let $r_0$ and $s_0$ be fixed points in a subinterval $I_k$ and $I_{k'}$, respectively. We want to show the following.

\begin{lemma}\label{lemma Hessian operator}
Suppose $|d_{\Tilde{x}} \Tilde{\rho}_\alpha (\Tilde{\gamma}(r_0), \Tilde{\gamma}(s_0))(\Tilde{N})|\in [2^{-j-1}, 2^{-j+1}]$. Then, choosing $C>0$ sufficiently large with $|I_k|\approx e^{-CT}$, there exists a uniform constant $\Tilde{C}>0$ such that, for $r$ and $r_0$ in a same subinterval $I_k$,
\begin{align*}
    |d_{\Tilde{x}} \Tilde{\rho}_\alpha (\Tilde{\gamma}(r), \Tilde{\gamma}(s_0))(\Tilde{N})|\not\in [2^{-j-1}, 2^{-j+1}],\quad \text{whenever } |r-r_0|\geq \Tilde{C} 2^{-j}.
\end{align*}

Similarly, if $|d_{\Tilde{y}} \Tilde{\rho}_\alpha (\Tilde{\gamma}(r_0), \Tilde{\gamma}(s_0))(\alpha_*(\Tilde{N}))|\in [2^{-j-1}, 2^{-j+1}]$, then, choosing $C>0$ sufficiently large with $|I_k|\approx e^{-CT}$, there exists a uniform constant $\Tilde{C}>0$ such that, for $s$ and $s_0$ in a same subinterval $I_k$,
\begin{align*}
    |d_{\Tilde{y}} \Tilde{\rho}_\alpha(\Tilde{\gamma}(r_0), \Tilde{\gamma}(s))(\alpha_*(\Tilde{N}))|\not\in [2^{-j-1}, 2^{-j+1}],\quad \text{whenever } |s-s_0|\geq \Tilde{C} 2^{-j}.
\end{align*}
\end{lemma}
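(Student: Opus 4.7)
The plan is to show that the scalar function
\[
h(r):=d_{\Tilde{x}}\Tilde{\rho}_\alpha\bigl(\Tilde{\gamma}(r),\Tilde{\gamma}(s_0)\bigr)\bigl(\Tilde{N}(r)\bigr)
\]
satisfies $|h'(r_0)|\gtrsim 1$ uniformly in $j$, $\alpha$, and $r_0$; a Taylor expansion on the subinterval $I_k$ will then force $|h|$ out of $[2^{-j-1},2^{-j+1}]$ once $|r-r_0|\ge \Tilde{C}\,2^{-j}$. The second assertion is analogous with $r$ and $s$ exchanged, using that the deck transformation $\alpha$ is an isometry of $(\Tilde{M},\Tilde{g})$, so that $\alpha\circ\Tilde{\gamma}$ has the same nonvanishing geodesic curvature as $\Tilde{\gamma}$ and $\alpha_*(\Tilde{N})$ is its unit normal.

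The first step is an identity for $h'$. Write $U=\nabla_{\Tilde{x}}\Tilde{\rho}_\alpha$, the unit vector field on $\Tilde{M}\setminus\{\alpha(\Tilde{\gamma}(s_0))\}$ pointing radially away from $\alpha(\Tilde{\gamma}(s_0))$, and decompose along $\Tilde{\gamma}$ as $U=p(r)\Tilde{\gamma}'(r)+q(r)\Tilde{N}(r)$ with $p^2+q^2=1$. Since $q=h$, the hypothesis $|h(r_0)|\in[2^{-j-1},2^{-j+1}]$ forces $|p(r_0)|\approx 1$. Differentiating $h=\langle U,\Tilde{N}\rangle_{\Tilde{g}}$ along $\Tilde{\gamma}$ and using the planar Frenet identity $D_{\Tilde{\gamma}'}\Tilde{N}=-k_g\,\Tilde{\gamma}'$ together with the radial vanishing $\mathrm{Hess}\,\Tilde{\rho}_\alpha(U,\,\cdot\,)\equiv 0$ (a consequence of $\nabla_U U=0$ along unit-speed geodesics and $|U|\equiv 1$), a short expansion in the orthonormal basis $\{U,U^\perp\}$ produces
\[
h'(r)=-p(r)\Bigl(q(r)\,H(r)+k_g(r)\Bigr),\qquad H(r):=\mathrm{Hess}\,\Tilde{\rho}_\alpha\bigl(U^\perp,U^\perp\bigr).
\]

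The crux is then the Hessian comparison theorem \cite[Theorem 11.7]{Lee2018secondEd} applied on the Cartan--Hadamard manifold $(\Tilde{M},\Tilde{g})$ whose sectional curvatures lie in $[-1,0]$: one obtains $0\le H(r)\le \coth\bigl(\Tilde{\rho}_\alpha(\Tilde{\gamma}(r),\Tilde{\gamma}(s_0))\bigr)$. Because $\alpha\ne\mathrm{Id}$, the injectivity radius of $M$ forces $\Tilde{\rho}_\alpha\gtrsim 1$, whence $H(r)=O(1)$ uniformly in $r,\alpha,s_0$. Since $|q(r_0)|\lesssim 2^{-j}$, the $qH$ contribution is negligible, while the geodesic curvature satisfies $|k_g|\ge c_0>0$ by the nonvanishing-curvature hypothesis on $\gamma$. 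Together with $|p(r_0)|\approx 1$, this gives $|h'(r_0)|\gtrsim 1$.

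To upgrade this from $r_0$ to all of $I_k$, I would argue that on a subinterval of length $e^{-CT}$ the quantities $p,q,H,k_g$ vary by at most an $O(1)$ multiplicative factor, using Jacobi-field estimates in $\Tilde{M}$: derivatives of $H$ and of the other curvature quantities along $\Tilde{\gamma}$ grow at most like $e^{C'T}$, and the scale $e^{-CT}$ is chosen with $C>C'$ precisely to absorb that growth. The mean value theorem combined with the definite sign of $h'$ then yields $|h(r)-h(r_0)|\gtrsim |r-r_0|$, so for $\Tilde{C}$ sufficiently large the inequality $|h(r)|>2^{-j+1}$ holds whenever $|r-r_0|\ge \Tilde{C}\,2^{-j}$, and $h$ cannot re-enter $[2^{-j-1},2^{-j+1}]$ by monotonicity. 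The main obstacle is precisely this uniformity across $I_k$: one has to calibrate $C$ in $|I_k|\approx e^{-CT}$ against the exponential Jacobi-field bounds so that the lower bound on $|h'|$ persists throughout the subinterval while the Hessian term $qH$ stays subdominant to $k_g$.
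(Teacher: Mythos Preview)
Your proposal is correct and follows essentially the same approach as the paper: both arguments differentiate $h$, split the derivative into a Hessian-of-distance term (bounded via the Hessian comparison theorem using the curvature pinching $-1\le\kappa\le 0$ and the fact that $\Tilde{\rho}_\alpha\gtrsim 1$ for $\alpha\ne\mathrm{Id}$) and a geodesic-curvature term that dominates, yielding $|h'(r_0)|\approx 1$; then a Taylor expansion on the $e^{-CT}$-scale subinterval (with the second-order remainder controlled by $e^{C'T}$-type bounds on higher derivatives of $\Tilde{\rho}_\alpha$, and $C>C'$) gives the conclusion. The only cosmetic differences are that you treat the $r$-variable first and package the computation into the compact formula $h'=-p(qH+k_g)$, whereas the paper treats the $s$-variable and writes out the product-rule terms separately via the Weingarten map.
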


Before we prove this lemma, we review some basic properties of the Hessian operator $\mathcal{H}_r$ in \cite{Lee2018secondEd}: Suppose $(M, g)$ is an $n$-dimensional Riemannian manifold, $U$ is a normal neighborhood of a point $p\in M$, and $r:U\to \mathbb{R}$ is the radial distance function from the point $p$ defined by
\begin{align}\label{radial dist fcn for Hessian opr}
    r(x)=\sqrt{(x_1)^2+\cdots + (x_n)^2},
\end{align}
where $(x_i)$ are normal coordinates on $U$ centered at $p$. We also define the radial vector field on $U\setminus \{p\}$, denoted by $\partial_r$, as
\begin{align*}
    \partial_r=\sum_{i=1}^n \frac{x_i}{r(x)} \frac{\partial}{\partial x_i}=\mathrm{grad}\:r
\end{align*}
(cf. \cite[Corollary 6.10]{Lee2018secondEd}), where $\mathrm{grad} f=(d_{\Tilde{x}} f)^\#$ is the Riemannian gradient of $f$ and $\#$ is the musical isomorphism sharp. Then, the $(1, 1)$-tensor field $\mathcal{H}_r=\nabla (\partial_r)$, defined by
\begin{align*}
    \mathcal{H}_r (w)=\nabla_w \partial_r, \quad \text{for all } w\in TM|_{U\setminus \{p\}},
\end{align*}
is called the Hessian operator of $r$, where $\nabla$ is the Levi-Civita connenction. By \cite[Lemma 11.1]{Lee2018secondEd}, $\mathcal{H}_r$ is self-adjoint, $\mathcal{H}_r (\partial_r)\equiv 0$, and the restriction of $\mathcal{H}_r$ to vectors tangents to a level set of $r$ is equal to the shape operator of the level set associated with the normal vector field $-\partial_r$.

\begin{proof}[Proof of Lemma \ref{lemma Hessian operator}]
We prove the second case in this lemma. Similar arguments will work on the first one. In this proof, $\nabla$ denotes the Levi-Civita connection. We write $\Tilde{\rho}_0 (\Tilde{y})=\Tilde{\rho}_{0, \alpha} (\Tilde{y})=\Tilde{\rho}_0 (\Tilde{\gamma}(r_0), \alpha(\Tilde{y}))$ so that $\Tilde{\rho}_0$ is the distance function as in \eqref{radial dist fcn for Hessian opr}, since $r_0$ is fixed. We also write the radial vector field as $\partial_{\Tilde{\rho}_0}=\frac{\partial}{\partial \Tilde{\rho}_0}$. Set
\begin{align*}
    h(s)=\langle \mathrm{grad}_{\Tilde{y}} \Tilde{\rho}_0 (\Tilde{\gamma}(s)), \alpha_*(\Tilde{N}) \rangle_{\Tilde{g}},
\end{align*}
where $\mathrm{grad}_{\Tilde{y}}$ is the gradient for $\Tilde{y}$. By assumption, we have $|h(s_0)|\in [2^{-j-1}, 2^{-j+1}]$. We will work in geodesic normal coordinates centered at $\Tilde{\gamma}(r_0)$.

We want to show that $|h'(s_0)|\approx 1$. Let $\Tilde{\eta}=\Tilde{\eta}_\alpha=\alpha\circ \Tilde{\gamma}$. We then have that
\begin{align}\label{h(s) derivative}
    \frac{d}{ds}(h(s))=\langle \nabla_{\dot{\Tilde{\eta}}(s)} \mathrm{grad}_{\Tilde{y}} \Tilde{\rho}_0 (\Tilde{\gamma}(s)), \alpha_*(\Tilde{N}) \rangle_{\Tilde{g}} +\langle \mathrm{grad}_{\Tilde{y}} \Tilde{\rho}_0 (\Tilde{\gamma}(s)), \nabla_{\dot{\Tilde{\eta}}(s)} \alpha_*(\Tilde{N}) \rangle_{\Tilde{g}}.
\end{align}
For the first term in the right hand side, note that 
\begin{align*}
    \langle \nabla_{\dot{\Tilde{\eta}}(s_0)} \mathrm{grad}_{\Tilde{y}} \Tilde{\rho}_0 (\Tilde{\gamma}(s_0)), \alpha_*(\Tilde{N}) \rangle_{\Tilde{g}}=\langle \mathcal{H}_{\Tilde{\rho}_0} (\dot{\Tilde{\eta}}(s_0)), \alpha_*(\Tilde{N})\rangle_{\Tilde{g}},
\end{align*}
where $\mathcal{H}_{\Tilde{\rho}_0}$ is the Hessian operator of $\Tilde{\rho}_0$. Since $\mathcal{H}_{\Tilde{\rho_0}} (\partial_{\Tilde{\rho}_0})\equiv 0$, we have
\begin{align}\label{Hessian at rho0}
    |\mathcal{H}_{\Tilde{\rho}_0} (\dot{\Tilde{\eta}}(s_0))|_{\Tilde{g}}=\left|\mathcal{H}_{\Tilde{\rho}_0} \left(\dot{\Tilde{\eta}}(s_0)-\frac{\partial}{\partial \Tilde{\rho}_0} \right) \right|_{\Tilde{g}}\leq \|\mathcal{H}_{\Tilde{\rho}_0}\|\left| \dot{\Tilde{\eta}}(s_0)-\frac{\partial}{\partial \Tilde{\rho}_0} \right|_{\Tilde{g}}\approx 2^{-j} \|\mathcal{H}_{\Tilde{\rho}_0}\|,
\end{align}
where $\|\mathcal{H}_{\Tilde{\rho}_0} \|$ denotes the operator norm of $\mathcal{H}_{\Tilde{\rho}_0}$ and $\frac{\partial}{\partial \Tilde{\rho}_0}$ is a radial vector at $\alpha \circ \Tilde{\gamma}(s_0)$. Here we used the fact that $\left| \dot{\Tilde{\eta}}(s_0)-\frac{\partial}{\partial \Tilde{\rho}_0} \right|_{\Tilde{g}}\approx 2^{-j}$, which follows from the assumption $|h(s_0)|\approx 2^{-j}$. For the first term in \eqref{h(s) derivative} on the right hand side, we continue to show that $\|\mathcal{H}_{\Tilde{\rho}_0}\|\lesssim 1$. Let
\[
s_c (t)=\begin{cases}
t, & \text{if } c=0, \\
R\sin \frac{t}{R}, & \text{if } c=\frac{1}{R^2}>0, \\
R\sinh \frac{t}{R}, & \text{if } c=-\frac{1}{R^2}.
\end{cases}
\]
If we call the curvature of our manifold $\kappa$, then we can assume $-1\leq \kappa \leq 0$ by scaling our metric if necessary. By the Hessian comparison (cf. Theorem 11.7 in \cite{Lee2018secondEd}), we have that
\begin{align}\label{Hess comparison result}
    \frac{1}{\Tilde{\rho}_0}\pi_{\Tilde{\rho}_0} =\frac{s_0'(\rho)}{s_0(\rho)} \pi_{\Tilde{\rho}_0} \leq \mathcal{H}_{\Tilde{\rho}_0} \leq \frac{s_{-1}'(\rho)}{s_{-1}(\rho)}\pi_{\Tilde{\rho}_0}=\coth (\Tilde{\rho}_0)\pi_{\Tilde{\rho}_0},
\end{align}
where $\pi_{\Tilde{\rho}_0}$ is the orthogonal projection onto the tangent space of the level set of $\Tilde{\rho}_0$ as in \cite{Lee2018secondEd}. Here, $A\leq B$ means $\langle Av, v \rangle_{\Tilde{g}} \leq \langle Bv, v \rangle_{\Tilde{g}}$ for all vectors $v$. From the second inequality in \eqref{Hess comparison result}, we have
\begin{align*}
    \langle \mathcal{H}_{\Tilde{\rho}_0} v, v\rangle_{\Tilde{g}} \leq \coth(\Tilde{\rho}_0)\langle \pi_{\Tilde{\rho}_0} v, v \rangle_{\Tilde{g}} \lesssim \langle \pi_{\Tilde{\rho}_0} v, \pi_{\Tilde{\rho}_0} v+(v-\pi_{\Tilde{\rho}_0} v)\rangle_{\Tilde{g}}=|\pi_{\Tilde{\rho}_0} v|_{\Tilde{g}}^2 \leq |v|_{\Tilde{g}}^2.
\end{align*}
Here, we used the fact that $\coth(\rho)=\frac{e^\rho +e^{-\rho}}{e^{\rho}-e^{-\rho}}$ with $1\lesssim\Tilde{\rho}_0\leq T$. We can make the same argument for the first inequality, and so, in summary, we have
\begin{align*}
    0\leq \frac{1}{\Tilde{\rho}_0}|\pi_{\Tilde{\rho}_0} v|_{\Tilde{g}}^2 \leq \langle \mathcal{H}_{\Tilde{\rho}_0} v, v \rangle_{\Tilde{g}} \lesssim |v|_{\Tilde{g}}^2,
\end{align*}
from which it follows that $0\leq |\langle \mathcal{H}_{\Tilde{\rho}_0} v, v \rangle| \lesssim |v|_{\Tilde{g}}^2$. Since $\mathcal{H}_{\Tilde{\rho}_0}$ is self-adjoint (cf. Lemma 11.1 in \cite{Lee2018secondEd}), what we have shown is
\begin{align*}
    \|\mathcal{H}_{\Tilde{\rho}_0}\|=\sup_{|v|_{\Tilde{g}}=1} |\langle \mathcal{H}_{\Tilde{\rho}_0} v, v \rangle_{\Tilde{g}}|\lesssim \sup_{|v|_{\Tilde{g}}=1} |v|_{\Tilde{g}}^2=1.
\end{align*}
Combining this with \eqref{Hessian at rho0}, \eqref{h(s) derivative} is translated into
\begin{align}\label{h(s0) derivative}
    \left.\frac{d}{ds}(h(s))\right.\bigg|_{s=s_0}=O(2^{-j})+\langle \mathrm{grad}_{\Tilde{y}} \Tilde{\rho}_0 (\Tilde{\gamma}(s_0)), \nabla_{\dot{\Tilde{\eta}}(s_0)} \alpha_*(\Tilde{N}) \rangle_{\Tilde{g}}=O(2^{-j})+\left\langle \frac{\partial}{\partial \Tilde{\rho}_0}, \nabla_{\dot{\Tilde{\eta}}(s_0)} \alpha_*(\Tilde{N}) \right\rangle_{\Tilde{g}}.
\end{align}

For the second term in \eqref{h(s) derivative}, we first note that
\begin{align*}
    \nabla_{\dot{\Tilde{\eta}}(s)} \dot{\Tilde{\eta}}(s)=\langle \nabla_{\dot{\Tilde{\eta}}(s)} \dot{\Tilde{\eta}} (s),  \alpha_* (\Tilde{N})\rangle_{\Tilde{g}} \alpha_* (\Tilde{N}).
\end{align*}
Indeed, since $\Tilde{\eta}$ can be parametrized by arc length, we have
\begin{align*}
\langle \nabla_{\dot{\Tilde{\eta}} (s)} \dot{\Tilde{\eta}}(s), \dot{\Tilde{\eta}}(s) \rangle_{\Tilde{g}}=\frac{1}{2}\nabla_{\dot{\Tilde{\eta}}(s)} (\langle \dot{\Tilde{\eta}}(s), \dot{\Tilde{\eta}}(s) \rangle_{\Tilde{g}})=\frac{1}{2}\nabla_{\dot{\Tilde{\eta}}(s)} 1=0,
\end{align*}
which in turn implies that
\begin{align*}
    \nabla_{\dot{\Tilde{\eta}}(s)} \dot{\Tilde{\eta}}(s)=\langle \nabla_{\dot{\Tilde{\eta}}(s)} \dot{\Tilde{\eta}}(s), \dot{\Tilde{\eta}}(s) \rangle_{\Tilde{g}} \dot{\Tilde{\eta}}(s)+\langle \nabla_{\dot{\Tilde{\eta}}(s)} \dot{\Tilde{\eta}} (s), \alpha_*(\Tilde{N}) \rangle_{\Tilde{g}} \alpha_* (\Tilde{N})=\langle \nabla_{\dot{\Tilde{\eta}}(s)} \dot{\Tilde{\eta}} (s), \alpha_*(\Tilde{N}) \rangle_{\Tilde{g}} \alpha_* (\Tilde{N}).
\end{align*}
Since $\alpha_*(\Tilde{N})$ and $\dot{\Tilde{\eta}}(s)$ are orthogonal (at $\Tilde{\eta}(s_0)$), we have
\begin{align*}
    |\nabla_{\dot{\Tilde{\eta}}(s_0)} \dot{\Tilde{\eta}}(s_0)|_{\Tilde{g}}=|\langle \nabla_{\dot{\Tilde{\eta}}(s_0)} \dot{\Tilde{\eta}}(s_0), \alpha_*(\Tilde{N}) \rangle_{\Tilde{g}}|&=|\langle \mathrm{\RomanNumeralCaps{2}} (\dot{\Tilde{\eta}}(s_0), \dot{\Tilde{\eta}}(s_0)), \alpha_*(\Tilde{N}) \rangle_{\Tilde{g}}| \\
    &=|\langle \dot{\Tilde{\eta}}(s_0), W_{\alpha_*(\Tilde{N})}(\dot{\Tilde{\eta}}(s_0)) \rangle_{\Tilde{g}}|\\
    &=|\langle \dot{\Tilde{\eta}}(s_0), \nabla_{\dot{\Tilde{\eta}}(s_0)} \alpha_*(\Tilde{N})\rangle_{\Tilde{g}}|\approx \left|\left\langle \frac{\partial}{\partial \Tilde{\rho}_0 }, \nabla_{\dot{\Tilde{\eta}}(s_0)} \alpha_*(\Tilde{N}) \right\rangle_{\Tilde{g}} \right|,
\end{align*}
where the map $W_N$ is the Weingarten map in the direction of $N$ and $\mathrm{\RomanNumeralCaps{2}}$ is the second fundamental form of $\alpha(\gamma)$ in the universal cover $(\mathbb{R}^2, \Tilde{g})$. In the last approximation, we used the fact $\left|\frac{\partial}{\partial \Tilde{\rho}_0}-\dot{\Tilde{\eta}}(s_0) \right|_{\Tilde{g}}\approx 2^{-j}$, where $j$ is large enough.

Since we know $|\nabla_{\dot{\Tilde{\eta}}(s)} \dot{\Tilde{\eta}}(s)|_{\Tilde{g}}\approx 1$ by the assumption on the curvature of the given curve $\gamma$, we have
\begin{align*}
    \left|\left\langle \frac{\partial}{\partial \Tilde{\rho}_0}, \nabla_{\dot{\Tilde{\eta}}(s_0)} \alpha_*(N) \right\rangle_{\Tilde{g}}\right| \approx |\nabla_{\dot{\Tilde{\eta}}(s_0)} \dot{\Tilde{\eta}}(s_0)|_{\Tilde{g}} \approx 1.
\end{align*}
Combining this with \eqref{h(s0) derivative}, we have that $|h'(s_0)|\approx 1$.

By Taylor's formula,
\begin{align*}
    h(s)=h(s_0)+h'(s_0)(s-s_0)+O(|h''|(s-s_0)^2).
\end{align*}
As a consequence of \cite[Lemma B.2]{Keeler2019TwoPointWeylLaw}, there exists $C'>0$ such that
\begin{align*}
    h(s)=h(s_0)+h'(s_0)(s-s_0)+O(e^{C'T} (s-s_0)^2).
\end{align*}
Since we are assuming $|s-s_0|\approx e^{-CT}$, for a sufficiently large $C>0$, we have
\begin{align*}
    h(s)=h(s_0)+(h'(s_0)+O(e^{(C'-C)T}))(s-s_0)\approx h(s_0)\pm|h'(s_0)|(s-s_0).
\end{align*}
Since we have shown $|h'(s_0)|\approx 1$, there exists $C_2>0$ such that if $|s-s_0|\geq C_2 2^{-j}$, then we have $|h(s)|\not\in [2^{-j-1}, 2^{-j+1}]$, which proves the lemma.
\end{proof}

By Lemma \ref{lemma Hessian operator}, we have, modulo $O(\lambda^{-1})$ errors, for $r\in I_k, s\in I_{k'}$,
\begin{align}\label{U alpha j after Hessian comp}
    \begin{split}
        U_{\alpha, j, \pm}(\Tilde{\gamma}(r), \Tilde{\gamma}(s))=
            \begin{cases}
                \frac{\lambda^{\frac{1}{2}}}{T}e^{i\lambda \Tilde{\rho}(\Tilde{\gamma}(r), \alpha(\Tilde{\gamma}(s)))} \Tilde{a}_{\alpha, j}(r, s), & \text{if } |r-r_0|\lesssim 2^{-j} \text{ and } |s-s_0|\lesssim 2^{-j}, \\
            O(\lambda^{-N}), & \text{otherwise},
            \end{cases}
    \end{split}
\end{align}
where $|\Tilde{a}_{\alpha, j}(r, s)|\leq C e^{CT}$. Here, there is at most one cube of sidelength $C 2^{-j}$ in $(r, s)\in I_k \times I_{k'}\subset I\times I=[0, \epsilon_1]^2$ for small $\epsilon_1>0$ such that the amplitude $\Tilde{a}_{\alpha, j}(r, s)$ is nonzero, and $(r_0, s_0)$ is the center of the cube $I_k \times I_{k'}$.

\begin{remark}
We observe that the way to find support properties of $U_{\alpha, j}$ here is similar to that of $K_j, +$ or $K_J$ in the previous section. We used the assumption of nonvanishing geodesic curvatures on $\gamma$ in both cases. We also used the properties of the Hessian operator and the Taylor expansion here, whereas used the properties of the solution to the eikonal equation $\varphi$ and the mean value theorem there.
\end{remark}

It follows from \eqref{U alpha j after Hessian comp} that
\begin{align*}
    \int |U_{\alpha, j, \pm}(\Tilde{\gamma}(r), \Tilde{\gamma}(s))|\:dr=\sum_k \int_{I_k} |U_{\alpha, j, \pm} (\Tilde{\gamma}(r), \Tilde{\gamma}(s))|\:dr \lesssim e^{C'T} \frac{\lambda^{\frac{1}{2}}}{T} 2^{-j}.
\end{align*}
Here, $e^{C'T}$ comes from the fact $|\Tilde{a}_{\alpha, j}(\Tilde{\gamma}(r), \Tilde{\gamma}(s))|\leq e^{C''T}$ and the fact that the number of $\{I_k\}$ is $e^{CT}$ up to some constant, and $2^{-j}$ comes from the support property $|r-r_0|\lesssim 2^{-j}$ in $I_k$ for some $k$. Similarly, we also have $\int |U_{\alpha, j, \pm}(\Tilde{\gamma}(r), \Tilde{\gamma}(s))|\:ds\lesssim e^{C'T} \frac{\lambda^{\frac{1}{2}}}{T} 2^{-j}$. By Young's inequality, we have
\begin{align*}
    \left\|\int U_{\alpha, j, \pm} (\Tilde{\gamma}(\cdot), \Tilde{\gamma}(s))f(s)\:ds \right\|_2 \lesssim \frac{\lambda^{\frac{1}{2}}}{T} e^{CT} 2^{-j} \|f\|_2.
\end{align*}
By \eqref{U alpha j after Hessian comp}, we also have that
\begin{align*}
    \left\|\int U_{\alpha, j, \pm} (\Tilde{\gamma}(\cdot), \Tilde{\gamma}(s))f(s)\:ds \right\|_\infty \lesssim \frac{\lambda^{\frac{1}{2}}}{T} e^{CT} \|f\|_1.
\end{align*}
By interpolation, we obtain
\begin{align*}
    \left\|\int U_{\alpha, j, \pm} (\Tilde{\gamma}(\cdot), \Tilde{\gamma}(s))f(s)\:ds \right\|_p \lesssim \frac{\lambda^{\frac{1}{2}}}{T} e^{CT} (2^{-j})^{\frac{2}{p}} \|f\|_{p'},\quad 2\leq p\leq \infty,
\end{align*}
which proves \eqref{Log imp TT^* U reduction}. This completes the proof.

\section{Proof of Corollary \ref{Cor: at p=4}}\label{S: Cor: at p=4}

Let $P=\sqrt{-\Delta_g}$, $\chi\in \mathcal{S}(\mathbb{R})$, and $\gamma$ be as above. In this section, we heavily borrow arguments from Xi and Zhang \cite{XiZhang2017improved}, which was also motivated by Bourgain \cite{Bourgain1991Besicovitch} and Sogge \cite{Sogge2017ImprovedCritical}. We first have an analogue of \cite[Lemma 1]{XiZhang2017improved}.
\begin{lemma}[Lemma 1 in \cite{XiZhang2017improved}]\label{Lemma l subsegment}
We set $\lambda^{-1}\leq l\leq 1$. Let $\gamma_l$ be a fixed subsegment of $\gamma$ with length $l$. We then have that
\begin{align*}
    \|\chi(\lambda-P) f\|_{L^2 (\gamma_l)} \lesssim \lambda^{\frac{1}{4}} l^{\frac{1}{4}} \|f\|_{L^2 (M)}.
\end{align*}
\end{lemma}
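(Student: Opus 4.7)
The plan is to derive this lemma as an immediate consequence of H\"older's inequality applied to the universal $L^4$ restriction estimate on $\gamma$. The key observation is that the $p=4$ case of the Burq--G\'erard--Tzvetkov/Hu estimate (equivalently, the $p=4$ endpoint of the interpolation stated just before Theorem~\ref{Theorem Log Improvement}) gives
\begin{equation*}
    \|\chi(\lambda-P) f\|_{L^4(\gamma)} \lesssim \lambda^{1/4} \|f\|_{L^2(M)},
\end{equation*}
and this single $L^4$ bound, combined with H\"older on a short subsegment, already produces the factor $l^{1/4}$.

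First I would pass from $\gamma_l$ to $\gamma$ using the trivial domain monotonicity $\|g\|_{L^4(\gamma_l)} \leq \|g\|_{L^4(\gamma)}$. Then H\"older's inequality on the arc-length measure of $\gamma_l$ gives
\begin{equation*}
    \|\chi(\lambda-P)f\|_{L^2(\gamma_l)} \leq |\gamma_l|^{1/2 - 1/4}\,\|\chi(\lambda-P)f\|_{L^4(\gamma_l)} = l^{1/4}\|\chi(\lambda-P)f\|_{L^4(\gamma_l)}.
\end{equation*}
Chaining these two inequalities with the universal $L^4$ bound yields
\begin{equation*}
    \|\chi(\lambda-P)f\|_{L^2(\gamma_l)} \lesssim l^{1/4}\,\lambda^{1/4}\,\|f\|_{L^2(M)},
\end{equation*}
which is the claim.

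There is no real obstacle here; the hypothesis $l \geq \lambda^{-1}$ plays no role in the H\"older argument itself but merely marks the scale below which the bound becomes uninteresting (for $l \leq \lambda^{-1}$ the right-hand side is $\lesssim 1$, and a cruder estimate via the $L^\infty$ bound already suffices). The only point worth flagging is that this bound is \emph{not} a consequence of the sharper $L^2(\gamma)$ estimate $\lambda^{1/6}$ from Theorem~\ref{Theorem Universal Estimates}: the $l$-gain comes through the $L^4$ estimate, which is why we route through $p=4$ rather than $p=2$. Concretely, $\lambda^{1/4} l^{1/4}$ beats $\lambda^{1/6}$ precisely when $l \lesssim \lambda^{-1/3}$, which is the regime in which the lemma provides new information over the global $L^2(\gamma)$ bound, and is the regime relevant for the Bourgain--Sogge--Xi--Zhang pigeonhole/localization strategy used to prove Corollary~\ref{Cor: at p=4}.
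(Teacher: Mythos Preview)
Your argument is correct and complete: H\"older on $\gamma_l$ together with the universal $L^4(\gamma)$ bound $\|\chi(\lambda-P)f\|_{L^4(\gamma)}\lesssim \lambda^{1/4}\|f\|_{L^2(M)}$ (which holds for any curve, curved or not, as the critical exponent $p=\frac{2n}{n-1}=4$ case of \eqref{BGT}) immediately gives $\lambda^{1/4}l^{1/4}$.

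This is a genuinely different and more elementary route than the one the paper points to. The paper does not prove the lemma directly but refers to Lemma~1 of Xi--Zhang \cite{XiZhang2017improved}, whose proof is a $TT^*$/kernel argument: one uses the parametrix bound $|\chi^2(\lambda-P)(\gamma(r),\gamma(s))|\lesssim \lambda^{1/2}|r-s|^{-1/2}$ (valid since $\rho(\gamma(r),\gamma(s))\approx |r-s|$ for short curves, as noted in Remark~\ref{Remark for subseg of gamma}) and then Schur's test on the interval of length $l$ to get $\|\chi^2(\lambda-P)\|_{L^2(\gamma_l)\to L^2(\gamma_l)}\lesssim \lambda^{1/2}l^{1/2}$. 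Your approach trades this kernel computation for the black-box $L^4$ restriction estimate, which is certainly cleaner here. The kernel approach has the minor advantage of being self-contained and of making transparent why the same bound holds with $\chi(T_0(\lambda-P))$ for any $T_0\geq 1$ (Remark~\ref{Remark for subseg of gamma}(3), which is what is actually invoked in the proof of Proposition~\ref{Prop: a weak L4 estimate}); your argument covers that variant too, since $\|\chi(T_0(\lambda-P))f\|_{L^4(\gamma)}\lesssim \lambda^{1/4}\|f\|_{L^2(M)}$ follows from the universal estimate by the standard almost-orthogonality reduction, but it is worth stating that step explicitly if you use this proof in the paper.
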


\begin{remark}\label{Remark for subseg of gamma}
\begin{enumerate}
    \item In fact, \cite[Lemma 1]{XiZhang2017improved} focuses on the case where $\gamma$ is a geodesic segment, but the argument there can also be considered for our $\gamma$, by using $\rho (\gamma(r), \gamma(s))\approx |r-s|$, which comes from $|r-s|\ll 1$ by a partition of unity if necessary, and \cite[Lemma 4.5]{BurqGerardTzvetkov2007restrictions}.
    \item A similar argument also works for a general curve $\gamma$, considering that $\rho (\gamma(r), \gamma(s))\approx |r-s|$ holds for any curve $\gamma$ by using a $C^\infty$ topology argument in \cite[Section 6]{BurqGerardTzvetkov2007restrictions}.
    \item As observed in \cite[Remark 1]{XiZhang2017improved}, a similar argument gives the same estimate for $\chi(T_0(\lambda-P))$ if $T_0 \geq 1$.
\end{enumerate}
\end{remark}

Let $T$ be as in \eqref{T Definition}. We show a weak $L^4$ estimate.

\begin{proposition}\label{Prop: a weak L4 estimate}
Suppose $(M, g)$ is a $2$-dimensional compact Riemannian manifold with nonpositive curvatures. Then, for $\lambda\gg 1$, we have
\begin{align*}
    \| \chi(T(\lambda-P)) \|_{L^2 (M) \to L^{4, \infty}(\gamma) }\lesssim \frac{\lambda^{\frac{1}{4}}}{(\log \lambda)^{\frac{1}{4}} }.
\end{align*}
\end{proposition}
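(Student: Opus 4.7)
The plan is to establish the weak-type bound
\begin{align*}
    \alpha^{4}\,\left|\{x\in\gamma : |\chi(T(\lambda-P))f(x)|>\alpha\}\right| \lesssim \frac{\lambda}{T}\,\|f\|_{L^2(M)}^{2}
\end{align*}
for every $\alpha>0$, since this is equivalent to the stated $L^2\to L^{4,\infty}(\gamma)$ bound. Set $u=\chi(T(\lambda-P))f$ with $\|f\|_{L^2(M)}=1$ and $E_\alpha=\{x\in\gamma : |u(x)|>\alpha\}$. Following the Bourgain-Sogge-Xi-Zhang template, I would split the argument into three ranges of $\alpha$.

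For $\alpha \le (\lambda/T)^{1/4}$ I would use the trivial bound $|E_\alpha|\le |\gamma|\lesssim 1$, which already gives $\alpha^{4}|E_\alpha|\lesssim \lambda/T$. For $(\lambda/T)^{1/4}<\alpha\le \lambda^{1/3}$, Theorem \ref{Theorem Log Improvement} at $p=2$ gives $\|u\|_{L^2(\gamma)}^{2}\lesssim \lambda^{1/3}/T$, so Chebyshev yields $\alpha^{4}|E_\alpha|\le \alpha^{2}\|u\|_{L^2(\gamma)}^{2}\le \lambda^{2/3}\cdot\lambda^{1/3}/T=\lambda/T$.

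The main step is the high range $\lambda^{1/3}<\alpha\lesssim \lambda^{1/2}/T^{1/2}$, where the upper bound comes from the logarithmically improved B\'erard--Hassell-Tacy $L^\infty$ estimate for $\chi(T(\lambda-P))$. Here I would partition $\gamma$ into sub-intervals $\{I_k\}$ of length $l$ (to be chosen in terms of $\alpha$), apply Lemma \ref{Lemma l subsegment} on each piece using Remark \ref{Remark for subseg of gamma}(3) to get $\|u\|_{L^2(I_k)}^{2}\lesssim \lambda^{1/2}l^{1/2}$, and combine this with a counting argument for the number $N_\alpha$ of sub-intervals meeting $E_\alpha$. The Bernstein-type inequality for the bandlimited function $u$ shows that if $|u(x_0)|>\alpha$ then $|u|>\alpha/2$ on an interval of length $\sim 1/\lambda$, which forces $\|u\|_{L^2(I_k)}^{2}\gtrsim \alpha^{2}/\lambda$ on each excited sub-interval (provided $l\gtrsim 1/\lambda$). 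Summing this lower bound over the excited intervals against a global $L^p$ bound from Theorem \ref{Theorem Log Improvement} controls $N_\alpha$, and then $|E_\alpha|\le N_\alpha\cdot \alpha^{-2}\lambda^{1/2}l^{1/2}$ via Chebyshev on each $I_k$.

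The main obstacle is that a naive implementation of the counting, using the $L^2$ bound as the global ingredient, closes only up to $\alpha\lesssim \lambda^{1/3}$ and misses the target by a factor of $\lambda^{1/3}$ in the high range. To remove this gap, I would replace the global $L^2$ norm by the global $L^p$ norm of Theorem \ref{Theorem Log Improvement} at an index $p$ close to $4$, carefully balancing the blowup $C_p\sim (4-p)^{-1}$ against the improvement in the $\lambda$-exponent, and optimizing $l$ and $p$ jointly as functions of $\alpha$; the choice $4-p\sim 1/T$ is natural and delivers the $\lambda^{1/4}/T^{1/4}$ bound after dyadic summation. A stopping-time decomposition of $E_\alpha$ according to the local density, in the spirit of Bourgain \cite{Bourgain1991Besicovitch} and Xi--Zhang \cite{XiZhang2017improved}, is the cleanest device for organizing this optimization.
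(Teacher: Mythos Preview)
Your treatment of the ranges $\alpha\le\lambda^{1/3}$ is correct and matches the paper. The gap is in the high range $\alpha>\lambda^{1/3}$: the counting argument you outline does not close, and the fix you propose (optimizing $p$ near $4$ in Theorem~\ref{Theorem Log Improvement}) is not strong enough. If you carry out your scheme with the Bernstein lower bound $\|u\|_{L^p(I_k)}^p\gtrsim\alpha^p/\lambda$ on each excited interval, the global bound $\|u\|_{L^p(\gamma)}^p\le C_p^p\lambda^{p/3-1/3}T^{-p/2}$ from Theorem~\ref{Theorem Log Improvement} gives $N_\alpha\lesssim C_p^p\lambda^{p/3+2/3}\alpha^{-p}T^{-p/2}$, and then Chebyshev on each $I_k$ via Lemma~\ref{Lemma l subsegment} yields
\[
\alpha^4|E_\alpha|\lesssim C_p^p\,\lambda^{p/3+7/6}\,l^{1/2}\,\alpha^{2-p}\,T^{-p/2}.
\]
Even with the optimal choice $l\sim\lambda^{-1}$ and $p=4-\epsilon$ with $\epsilon\sim 1/T$, at $\alpha\sim\lambda^{1/3}$ this gives $\alpha^4|E_\alpha|\lesssim \epsilon^{-4}\lambda^{1/3}T^{-1}\cdot\lambda/T$, which is larger than $\lambda/T$ by the unbounded factor $\epsilon^{-4}\lambda^{1/3}T^{-1}$. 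The loss of $\lambda^{1/3}$ is structural: Theorem~\ref{Theorem Log Improvement} only encodes the curved-curve gain, not the long-time dispersion needed here.

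What the paper actually uses in this range is the Bourgain $TT^*$ argument on separated pieces $A_j\subset E_\alpha$ of length $r=\lambda\alpha^{-4}(\log\lambda)^{-2}$, with the diagonal controlled by Lemma~\ref{Lemma l subsegment} and the off-diagonal controlled by B\'erard's pointwise kernel bound
\[
|\chi^2(T(\lambda-P))(x,y)|\lesssim T^{-1}\Big(\frac{\lambda}{\rho(x,y)}\Big)^{1/2}+\lambda^{1/2}e^{CT}
\]
(Lemma~\ref{Lemma: a bound of Berard}). This kernel estimate is the essential input your proposal is missing: it is what converts the separation $\mathrm{dist}(A_j,A_k)\gtrsim r$ into the off-diagonal gain that absorbs the factor $\alpha^2$ for $\alpha\ge\lambda^{1/3}$. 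Your closing remark about a Bourgain--Xi--Zhang stopping-time decomposition is pointing in the right direction, but without the B\'erard kernel bound there is nothing to make the off-diagonal interaction small.
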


To show this, we will need a result from B\'erard \cite{Berard1977onthewaveequation}.

\begin{lemma}[\cite{Berard1977onthewaveequation}]\label{Lemma: a bound of Berard}
Let $(M, g)$ be as above. Then there exists a constant $C=C(M, g)$ so that, for $T_0 \geq 1$ and $\lambda \gg 1$, we have that
\begin{align*}
    |\chi^2 (T_0 (\lambda-P)) (x, y)|\leq C \left[T_0^{-1}\left(\frac{\lambda}{\rho (x, y)} \right)^{\frac{1}{2}} +\lambda^{\frac{1}{2}} e^{CT} \right].
\end{align*}
\end{lemma}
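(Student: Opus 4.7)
The plan is to follow B\'erard's classical approach: represent $\chi^2(T_0(\lambda-P))$ via the half-wave kernel, split the $t$-integration into a ``local'' piece supported near $t=0$ and a ``global'' piece, handle the local piece with the Hadamard parametrix on $M$ itself, and handle the global piece by lifting to the universal cover $\tilde M$ and summing the deck-transformed contributions using finite propagation speed.

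First, by Euler's formula and the fact that $\chi^2(T_0(\lambda+P))(x,y)=O(\lambda^{-N})$ (since the symbol is Schwartz and $\lambda+P$ is bounded below by $\lambda$), we have
\begin{align*}
    \chi^2(T_0(\lambda-P))(x,y)=\frac{1}{\pi T_0}\int e^{it\lambda}\widehat{\chi^2}(t/T_0)(\cos tP)(x,y)\,dt+O(\lambda^{-N}).
\end{align*}
Fix $c>0$ less than the injectivity radius of $M$ and choose $\beta\in C_0^\infty(\mathbb{R})$ with $\beta(t)=1$ for $|t|\leq c$ and $\beta(t)=0$ for $|t|\geq 2c$. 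Split the integral as $I+II$ corresponding to $\beta(t)$ and $1-\beta(t)$.

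For the local piece $I$, apply the Hadamard parametrix on $M$ as in \eqref{Cosine propagator without errors}: modulo $O(\lambda^{-1})$ errors, $(\cos tP)(x,y)=u_0(x,y)\partial_t E_0(t,\rho(x,y))$, where $\partial_t E_0$ has an oscillatory integral representation $(2\pi)^{-2}\int e^{i\Phi(x,y)\cdot\xi}\cos(t|\xi|)\,d\xi$ with $|\Phi(x,y)|=\rho(x,y)$. After inserting this into $I$, perform stationary phase in $t$ (the critical point occurs at $|\xi|=\lambda$, where $\widehat{\chi^2}$ localizes), yielding a $\lambda^{-1/2}$ gain; the $\xi$-integration contributes $\lambda\cdot \rho(x,y)^{-1/2}$ from stationary phase along the light cone. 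Combining with the $T_0^{-1}$ prefactor gives the desired $C T_0^{-1}(\lambda/\rho(x,y))^{1/2}$ bound. The higher-order Hadamard coefficients and $E_\nu$ with $\nu\geq 1$ contribute lower-order terms that are negligible, by the same argument as in Lemma \ref{Lemma: nu positive small}.

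For the global piece $II$, lift to the universal cover using
\begin{align*}
    (\cos tP)(x,y)=\sum_{\alpha\in\Gamma}(\cos t\sqrt{-\Delta_{\tilde g}})(\tilde x,\alpha(\tilde y)),\qquad \tilde x,\tilde y\in D.
\end{align*}
Since $\widehat{\chi^2}(t/T_0)$ is supported in $|t|\lesssim T_0$ and $\cos(t\sqrt{-\Delta_{\tilde g}})(\tilde x,\alpha(\tilde y))=0$ when $\tilde\rho(\tilde x,\alpha(\tilde y))>|t|$ by finite propagation speed, only $\alpha$'s with $\tilde\rho(\tilde x,\alpha(\tilde y))\lesssim T_0$ contribute. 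By Bishop--G\"unther volume comparison in nonpositive curvature, this number is $O(e^{CT_0})$, hence $O(e^{CT})$ in the regime $T_0\leq T$ where the lemma will be applied. For each such $\alpha$, apply the Hadamard parametrix on $\tilde M$ and perform stationary phase in $t$ as in Proposition \ref{Prop: SP results in Log Improvement} (without $Q_j$ cutoffs): the critical point yields an amplitude of size $O(1)$, a $\lambda^{1/2}$ factor from stationary phase, and the remainders $\tilde R_N$ are controlled by Lemma \ref{Lemma: remainder small} with $e^{CT}$ growth. Summing over $\alpha$ produces the bound $\lambda^{1/2}e^{CT}$ after dividing by $T_0\geq 1$.

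The main obstacle will be in the global piece: making sure the growth in $T_0$ is captured only by the counting of deck transformations and the Hadamard remainder estimate, rather than accumulating additional polynomial losses. This requires choosing $N$ large enough in the Hadamard expansion so that $\tilde R_N$ is smoother than needed for stationary phase, and carefully tracking the amplitude bounds through the stationary phase expansion on $\tilde M$, where the diameter of the relevant region grows like $T_0$.
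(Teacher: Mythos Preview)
The paper does not supply its own proof of this lemma; it is quoted directly from B\'erard \cite{Berard1977onthewaveequation} and used as a black box in the proof of Proposition~\ref{Prop: a weak L4 estimate}. So there is no ``paper's proof'' to compare against, and your sketch is essentially the standard B\'erard argument that the citation points to.

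Your outline is correct in structure and captures the right ideas: the cosine-propagator representation, the local/global split, the Hadamard parametrix for the local piece, and the lift to the universal cover with deck-transformation counting for the global piece. One minor imprecision: in the local piece you describe a ``stationary phase in $t$'' yielding a $\lambda^{-1/2}$ gain, but the $t$-integral has no stationary point---it is a Fourier transform that localizes $|\xi|$ to a band of width $O(T_0^{-1})$ around $\lambda$. The correct bookkeeping is that the $t$-integral produces a factor of $T_0$ times a cutoff to $||\xi|-\lambda|\lesssim T_0^{-1}$, and then stationary phase in the angular variable of $\xi$ on the annulus $|\xi|\approx\lambda$ gives the factor $(\lambda\rho(x,y))^{-1/2}$; combined with the area $\lambda T_0^{-1}$ of the annulus and the prefactor $T_0^{-1}$, one arrives at $T_0^{-1}(\lambda/\rho(x,y))^{1/2}$. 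The endpoint is the same, but the mechanism you describe is not quite the right one. For the global piece your accounting is fine; note also that in the paper's application one has $T_0=T$, so the substitution $e^{CT_0}\le e^{CT}$ is trivially an equality there.
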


We now show Proposition \ref{Prop: a weak L4 estimate}.
\begin{proof}[Proof of Proposition \ref{Prop: a weak L4 estimate}]
Assuming $\|f\|_{L^2 (M)}=1$, it suffices to show that
\begin{align*}
    |\{x\in \gamma: |\chi(T(\lambda-P)) f(x)|>\alpha\}|\leq C\alpha^{-4} \lambda (\log \lambda)^{-1}.
\end{align*}
By the Chebyshev inequality and Theorem \ref{Theorem Log Improvement}, we have
\begin{align*}
    |\{x\in \gamma: |\chi(T(\lambda-P)) f(x)|>\alpha\}|\leq \alpha^{-2}\int_\gamma |\chi(T(\lambda-P))f|^2\:ds \leq \alpha^{-2} \lambda^{\frac{1}{3}} (\log \lambda)^{-1}.
\end{align*}
Note that, for large $\lambda$,
\begin{align*}
    \alpha^{-2} \lambda^{\frac{1}{3}} (\log \lambda)^{-1} \ll \alpha^{-4} \lambda (\log \lambda)^{-1},\quad \text{if } \alpha^2 \leq \lambda^{\frac{2}{3}},\quad \text{i.e., } \alpha\leq \lambda^{\frac{1}{3}}.
\end{align*}

We are left to show that
\begin{align}\label{Weak L4 claim}
    |\{x\in \gamma: |\chi(T(\lambda-P)) f(x)|>\alpha\}|\leq C \alpha^{-4} \lambda (\log \lambda)^{-1},\quad \text{when } \alpha\geq \lambda^{\frac{1}{3}}, \text{ and } \|f\|_{L^2 (M)}=1.
\end{align}
We set
\begin{align*}
    A=A_\alpha=\{x\in \gamma: |\chi (T(\lambda-P)) f(x)|>\alpha\},\quad \text{and} \quad r=\lambda \alpha^{-4} (\log \lambda)^{-2}.
\end{align*}
We consider a disjoint union $A=\cup_j A_j$, where $|A_j|\approx r$. Replacing $A$ by a set of proportional measure, we may assume that $\mathrm{dist}(A_j, A_k)>C_1 r$, when $j\not= k$ for some $C_1>0$, which will be specified later.

Let $T_\lambda: \chi (T(\lambda-P)):L^2 (M) \to L^2 (\gamma)$, and let
\[
\psi_\lambda (x)=\begin{cases}
    \frac{T_\lambda f(x)}{|T_\lambda f(x)|}, & \text{if } T_\lambda f(x)\not=0,\\
    1, & \text{otherwise.}
\end{cases}
\]
We also write
\begin{align*}
    S_\lambda= T_\lambda T_\lambda^*,\quad \text{and } a_j =\overline{\psi_\lambda \mathds{1}_{A_j}}.
\end{align*}
By the Chebyshev inequality and Cauchy-Schwarz inequality, we have
\begin{align*}
    \alpha |A|\leq \left|\int_\gamma T_\lambda f \overline{\psi_\lambda \mathds{1}_A}\:ds \right|\leq \left|\int_\gamma \sum_j T_\lambda f a_j\:ds \right|=\left|\int_M \sum_j T_\lambda^* a_j f\:dV_g \right|\leq \left(\int_M |\sum_j T_\lambda^* a_j|^2\:dV_g \right)^{\frac{1}{2}}.
\end{align*}
We can then write
\begin{align*}
    \alpha^2 |A|^2\leq \RomanNumeralCaps{1}+\RomanNumeralCaps{2},
\end{align*}
where
\begin{align*}
    \RomanNumeralCaps{1}=\sum_j \int_M |T_\lambda^* a_j|^2 \:dV_g,\quad \RomanNumeralCaps{2}=\sum_{j\not=k} \int_\gamma S_\lambda a_j \overline{a_k}\:ds.
\end{align*}
By duality and Remark \ref{Remark for subseg of gamma}, we have that
\begin{align*}
    \RomanNumeralCaps{1}\leq C r^{\frac{1}{2}}\lambda^{\frac{1}{2}} \sum_j \int_\gamma |a_j|^2\:ds=C r^{\frac{1}{2}} \lambda^{\frac{1}{2}} |A|=C \lambda \alpha^{-2} (\log \lambda)^{-1} |A|.
\end{align*}
For \RomanNumeralCaps{2}, by Lemma \ref{Lemma: a bound of Berard}, we note that the kernel $K_\lambda (s, s')$ of $S_\lambda$ satisfies
\begin{align*}
    |K_\lambda (s, s')|\leq C\left[\frac{1}{T}\left(\frac{\lambda}{|s-s'|} \right)^{\frac{1}{2}}+\lambda^{\frac{1}{2}} e^{CT} \right]=C\left[\frac{1}{c_0 \log \lambda}\left(\frac{\lambda}{|s-s'|} \right)^{\frac{1}{2}} +\lambda^{\frac{1}{2}+Cc_0} \right],
\end{align*}
which in turn implies that
\begin{align*}
    \RomanNumeralCaps{2}\leq C\left[\frac{1}{c_0 \log \lambda} \left(\frac{\lambda}{C_1 r} \right)^{\frac{1}{2}}+\lambda^{\frac{1}{2}+Cc_0} \right]\sum_{j\not=k} \|a_j\|_{L^1} \|a_k\|_{L^1} \leq \left[\frac{C}{c_0 C_1^{\frac{1}{2}}} \alpha^2+C\lambda^{\frac{1}{2}+Cc_0} \right] |A|^2.
\end{align*}
We now take $c_0$ to be sufficiently small so that $C\lambda^{\frac{1}{2}+Cc_0}\leq \frac{1}{4}\lambda^{\frac{2}{3}} \leq \frac{1}{4}\alpha^2$, since $\lambda\gg 1$ and $\alpha\geq \lambda^{\frac{1}{3}}$. Given the small $c_0>0$, we take $C_1\gg 1$ so that $\frac{C}{c_0 C_1^{\frac{1}{2}}}\leq \frac{1}{4}$. It then follows that
\begin{align*}
    \RomanNumeralCaps{2}\leq \frac{1}{2}\alpha^2 |A|^2.
\end{align*}

Putting these all together, we have that
\begin{align*}
    \alpha^2 |A|^2 \leq \RomanNumeralCaps{1}+\RomanNumeralCaps{2}\leq C\lambda \alpha^{-2} (\log \lambda)^{-1}|A|+\frac{1}{2}\alpha^2 |A|^2,
\end{align*}
and thus,
\begin{align*}
    |A|\leq C\lambda\alpha^{-4}(\log \lambda)^{-1},\quad \text{if } \alpha\geq \lambda^{\frac{1}{3}},
\end{align*}
which proves \eqref{Weak L4 claim}. This completes the proof of Proposition \ref{Prop: a weak L4 estimate}.
\end{proof}

We are now ready to prove Corollary \ref{Cor: at p=4}. We first recall a special case of a result in Bak and Seeger \cite{BakSeeger2011Extensions}.

\begin{lemma}[\cite{BakSeeger2011Extensions}]\label{Lemma: Bak and Seeger}
Suppose $(M, g)$ is any $2$-dimensional Riemannian manifold. If $\gamma\subset M$ is a curve segment in $M$, then
\begin{align*}
    \| \mathds{1}_{[\lambda, \lambda+1]} (P) f\|_{L^{4, 2}(\gamma)}\lesssim \lambda^{\frac{1}{4}} \|f\|_{L^2 (M)},\quad \lambda \geq 1.
\end{align*}
\end{lemma}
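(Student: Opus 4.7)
The plan is to follow the approach of Bak and Seeger via a $TT^*$ argument combined with O'Neil's sharp convolution inequality in Lorentz spaces. By the duality $(L^{4,2})^* = L^{4/3,2}$ and the $TT^*$ identity, the desired bound $\|\mathds{1}_{[\lambda,\lambda+1]}(P)\|_{L^2(M)\to L^{4,2}(\gamma)}\lesssim \lambda^{1/4}$ is equivalent to the bilinear estimate
\begin{align*}
    \|\mathcal{R}_\gamma\circ \chi^2(\lambda-P)\circ \mathcal{R}_\gamma^* f\|_{L^{4,2}(\gamma)} \lesssim \lambda^{1/2}\|f\|_{L^{4/3,2}(\gamma)},
\end{align*}
where $\chi\in \mathcal{S}(\mathbb{R})$ is chosen as in \S\ref{S:Reduction to Universal Estimates} so that $|\chi|^2\geq \mathds{1}_{[-1,1]}$, which lets us replace the sharp spectral cutoff $\mathds{1}_{[\lambda,\lambda+1]}(P)^*\mathds{1}_{[\lambda,\lambda+1]}(P)$ by $\chi^2(\lambda-P)$.

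Next, I would establish the classical H\"ormander-type pointwise kernel estimate
\begin{align*}
    \bigl|\chi^2(\lambda-P)(\gamma(s),\gamma(s'))\bigr| \lesssim \lambda\bigl(1+\lambda|s-s'|\bigr)^{-1/2},
\end{align*}
using the Fourier representation $\chi^2(\lambda-P)=(2\pi)^{-1}\int \widehat{\chi^2}(t)e^{it\lambda}e^{-itP}\,dt$, the Lax parametrix for $e^{-itP}$, and stationary phase in the frequency variable, together with the fact that $\rho(\gamma(s),\gamma(s'))\approx |s-s'|$ on small subsegments (which holds for any smooth $\gamma$, by a partition of unity). Note that no curvature hypothesis on $\gamma$ is needed for this kernel bound; the $L^{4,2}(\gamma)$ statement is therefore truly universal.

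The final step is the Lorentz-space convolution estimate. A direct computation of the distribution function shows that the majorizing kernel $s\mapsto \lambda(1+\lambda|s|)^{-1/2}$ lies in $L^{2,\infty}(\mathbb{R})$ with norm comparable to $\lambda^{1/2}$: indeed, $|\{s:\lambda(1+\lambda|s|)^{-1/2}>t\}|\approx \lambda/t^{2}$ for $0<t<\lambda$, and scaling recovers the factor $\lambda^{1/2}$. By O'Neil's Young-type inequality in Lorentz spaces applied with indices $(p_1,q_1)=(4/3,2)$ and $(p_2,q_2)=(2,\infty)$ (noting $1/4=3/4+1/2-1$ and $1/2=1/2+0$), convolution against this kernel maps $L^{4/3,2}(\mathbb{R})$ into $L^{4,2}(\mathbb{R})$ with norm $\lesssim \lambda^{1/2}$, giving the required $TT^*$ bound.

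The main obstacle, if any, is recognizing that the Lorentz refinement $L^{4,2}(\gamma)$ over $L^4(\gamma)$ is exactly what one extracts from the sharp Young inequality at the endpoint $L^{2,\infty}$: the usual Hardy--Littlewood--Sobolev bound $L^{4/3}\to L^4$ would only recover the Burq--G\'erard--Tzvetkov result $\|\cdot\|_{L^4(\gamma)}\lesssim \lambda^{1/4}\|\cdot\|_{L^2(M)}$. Everything else -- the $TT^*$ reduction, the Hörmander kernel bound, and the duality of Lorentz spaces -- is standard.
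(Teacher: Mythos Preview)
The paper does not give its own proof of this lemma: it is stated as a citation of a special case of the result of Bak and Seeger \cite{BakSeeger2011Extensions}, with no argument supplied. Your proposal correctly reconstructs the Bak--Seeger strategy --- the $TT^*$ reduction, the H\"ormander-type kernel bound $|\chi^2(\lambda-P)(\gamma(s),\gamma(s'))|\lesssim \lambda(1+\lambda|s-s'|)^{-1/2}$, the observation that this kernel lies in $L^{2,\infty}$ with norm $\approx \lambda^{1/2}$, and O'Neil's convolution inequality with indices $L^{4/3,2}\ast L^{2,\infty}\hookrightarrow L^{4,2}$ --- and the index arithmetic you give is correct. So there is nothing to compare against; your outline is precisely the intended argument behind the cited reference.
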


We recall some properties of the Lorentz space $L^{p, q}(\gamma)$ (see also Grafakos \cite{Grafakos2014Classical}, etc.). First, for a function $u$ on $M$, the corresponding distribution function $d_u (\alpha)$ with respect to $\gamma$ is defined by
\begin{align*}
    d_u (\alpha)=|\{x\in \gamma: |u(x)|>\alpha\}|,\quad \alpha>0.
\end{align*}
The function $u^*$ is the nondecreasing rearrangement of $u$ on $\gamma$, defined by
\begin{align*}
    u^* (t)=\inf \{\alpha: d_u (\alpha)\leq t\},\quad t\geq 0.
\end{align*}
For $1\leq p\leq \infty$ and $1\leq q\leq \infty$, the Lorentz space $L^{p, q}(\gamma)$ is then
\begin{align*}
    L^{p, q} (\gamma)=\left\{u: \|u\|_{L^{p, q}(\gamma)}:=\left(\frac{q}{p}\int_0^\infty [t^{\frac{1}{p}} u^* (t)]^q\:\frac{dt}{t} \right)^{\frac{1}{q}} <\infty \right\}.
\end{align*}
It is also known that
\begin{align*}
    \| \cdot \|_{L^{p, p}(\gamma)}=\| \cdot \|_{L^p(\gamma)},\quad \text{and } \sup_{t>0} t^{\frac{1}{p}} u^* (t)=\sup_{\alpha >0} [d_u (\alpha)]^{\frac{1}{p}}.
\end{align*}

We now take $u=\mathds{1}_{[\lambda, \lambda+(\log \lambda)^{-1}]} (P)f$ with $\|f\|_{L^2 (M)}=1$. By Proposition \ref{Prop: a weak L4 estimate}, we have that
\begin{align}\label{A weak L4 and sup of u}
    \sup_{t>0} t^{\frac{1}{4}} u^* (t)\lesssim \|u\|_{L^{4, \infty}} \lesssim \frac{\lambda^{\frac{1}{4}}}{(\log \lambda)^{\frac{1}{4}} }.
\end{align}
Since $\mathds{1}_{[\lambda, \lambda+1]} (P)u=u$, by Lemma \ref{Lemma: Bak and Seeger}, we have that
\begin{align}\label{Est from Bak Seeger}
    \|u\|_{L^{4, 2}(\gamma)}\lesssim \lambda^{\frac{1}{4}} \|u\|_{L^2 (M)}\lesssim \lambda^{\frac{1}{4}}.
\end{align}
By \eqref{A weak L4 and sup of u} and \eqref{Est from Bak Seeger}, we have
\begin{align*}
    \|u\|_{L^4(\gamma)}=\left(\int_0^\infty [t^{\frac{1}{4}} u^* (t)]^4\:\frac{dt}{t} \right)^{\frac{1}{4}}\lesssim (\sup_{t>0} t^{\frac{1}{4}} u^* (t))^{\frac{1}{2}} \|u\|_{L^{4, 2}(\gamma)}^{\frac{1}{2}} \lesssim \left(\frac{\lambda^{\frac{1}{4}}}{(\log \lambda)^{\frac{1}{4}}} \right)^{\frac{1}{2}} \lambda^{\frac{1}{8}}=\frac{\lambda^{\frac{1}{4}}}{(\log \lambda)^{\frac{1}{8}}}.
\end{align*}

This completes the proof.

\bibliographystyle{amsalpha}
\bibliography{references}

\end{document}